\newcommand{\nc}{\newcommand}
\nc{\fg}{\mathfrak{f} } \nc{\vg}{\mathfrak{v} } \nc{\wg}{\mathfrak{w} }
\nc{\zg}{\mathfrak{z} } \nc{\ngo}{\mathfrak{n} } \nc{\kg}{\mathfrak{k} }
\nc{\mg}{\mathfrak{m} } \nc{\bg}{\mathfrak{b} } \nc{\ggo}{\mathfrak{g} }
\nc{\ggob}{\overline{\mathfrak{g}} } \nc{\sog}{\mathfrak{so} }
\nc{\sug}{\mathfrak{su} } \nc{\spg}{\mathfrak{sp} } \nc{\slg}{\mathfrak{sl} }
\nc{\glg}{\mathfrak{gl} } \nc{\cg}{\mathfrak{c} } \nc{\rg}{\mathfrak{r} }
\nc{\hg}{\mathfrak{h} } \nc{\tg}{\mathfrak{t} } \nc{\ug}{\mathfrak{u} }
\nc{\dg}{\mathfrak{d} } \nc{\ag}{\mathfrak{a} } \nc{\pg}{\mathfrak{p} }
\nc{\sg}{\mathfrak{s} } \nc{\affg}{\mathfrak{aff} } \nc{\qg}{\mathfrak{q} }
\nc{\pca}{\mathcal{P}} \nc{\nca}{\mathcal{N}} \nc{\lca}{\mathcal{L}}
\nc{\oca}{\mathcal{O}} \nc{\mca}{\mathcal{M}} \nc{\tca}{\mathcal{T}}
\nc{\aca}{\mathcal{A}} \nc{\cca}{\mathcal{C}} \nc{\gca}{\mathcal{G}}
\nc{\sca}{\mathcal{S}} \nc{\hca}{\mathcal{H}} \nc{\bca}{\mathcal{B}}
\nc{\dca}{\mathcal{D}} \nc{\val}{\operatorname{val}}
\nc{\vp}{\varphi} \nc{\ddt}{\frac{d}{dt}} \nc{\dds}{\frac{d}{ds}}
\nc{\dpar}{\frac{\partial}{\partial t}} \nc{\im}{\mathrm{i}}
\nc{\SO}{\mathrm{SO}} \nc{\Spe}{\mathrm{Sp}} \nc{\Sl}{\mathrm{SL}}
\nc{\SU}{\mathrm{SU}} \nc{\Or}{\mathrm{O}} \nc{\U}{\mathrm{U}} \nc{\Gl}{\mathrm{GL}}
\nc{\Se}{\mathrm{S}} \nc{\Cl}{\mathrm{Cl}} \nc{\Spein}{\mathrm{Spin}}
\nc{\Pin}{\mathrm{Pin}} \nc{\G}{\mathrm{GL}_n(\RR)} \nc{\g}{\mathfrak{gl}_n(\RR)}
\nc{\RR}{{\Bbb R}} \nc{\HH}{{\Bbb H}} \nc{\CC}{{\Bbb C}} \nc{\ZZ}{{\Bbb Z}}
\nc{\FF}{{\Bbb F}} \nc{\NN}{{\Bbb N}} \nc{\QQ}{{\Bbb Q}} \nc{\PP}{{\Bbb P}} \nc{\OO}{{\Bbb O}}
\nc{\vs}{\vspace{.2cm}} \nc{\vsp}{\vspace{1cm}} \nc{\ip}{\langle\cdot,\cdot\rangle}
\nc{\ipp}{(\cdot,\cdot)} \nc{\la}{\langle} \nc{\ra}{\rangle} \nc{\unm}{\frac{1}{2}}
\nc{\unc}{\frac{1}{4}} \nc{\und}{\frac{1}{16}} \nc{\no}{\vs\noindent}
\nc{\lam}{\Lambda^2(\RR^n)^*\otimes\RR^n} \nc{\tangz}{{\rm T}^{\rm Zar}}
\nc{\nor}{{\sf n}}  \nc{\mum}{/\!\!/} \nc{\kir}{/\!\!/\!\!/}
\nc{\Ri}{\tfrac{4\Ric_{\mu}}{||\mu||^2}} \nc{\ds}{\displaystyle}
\nc{\ben}{\begin{enumerate}} \nc{\een}{\end{enumerate}} \nc{\f}{\frac}
\nc{\lb}{[\cdot,\cdot]} \nc{\isn}{\tfrac{1}{||v||^2}}
\nc{\gkp}{(\ggo=\kg\oplus\pg,\ip)} \nc{\ukh}{(\ug=\kg\oplus\hg,\ip)}
\nc{\tgkp}{(\tilde{\ggo}=\kg\oplus\pg,\ip)}
\nc{\wt}{\widetilde} \nc{\mm}{M}
\nc{\iop}{\mathtt{i}} \nc{\jop}{\mathtt{j}}
\nc{\Hess}{\operatorname{Hess}} \nc{\ad}{\operatorname{ad}}
\nc{\Ad}{\operatorname{Ad}} \nc{\rank}{\operatorname{rank}}
\nc{\Irr}{\operatorname{Irr}} \nc{\End}{\operatorname{End}}
\nc{\Aut}{\operatorname{Aut}} \nc{\Inn}{\operatorname{Inn}}
\nc{\Der}{\operatorname{Der}} \nc{\Ker}{\operatorname{Ker}}
\nc{\Iso}{\operatorname{Iso}} \nc{\Diff}{\operatorname{Diff}}
\nc{\Lie}{\operatorname{L}} \nc{\tr}{\operatorname{tr}} \nc{\dif}{\operatorname{d}}
\nc{\sen}{\operatorname{sen}} \nc{\modu}{\operatorname{mod}}
\nc{\CRic}{\operatorname{PP}} \nc{\Cric}{\operatorname{P}} \nc{\Ricci}{\operatorname{Ric}}
\nc{\sym}{\operatorname{sym}} \nc{\herm}{\operatorname{herm}} \nc{\symac}{\operatorname{sym^{ac}}}
\nc{\symc}{\operatorname{sym^{c}}} \nc{\scalar}{\operatorname{scal}}
\nc{\grad}{\operatorname{grad}} \nc{\ricci}{\operatorname{Rc}}
\nc{\Nor}{\operatorname{Norm}}  \nc{\ricc}{\operatorname{Rc^{c}}}
\nc{\Ricc}{\operatorname{Ric^{c}}} \nc{\ricac}{\operatorname{Rc^{ac}}}
\nc{\Ricac}{\operatorname{Ric^{ac}}} \nc{\Riem}{\operatorname{R}}
\nc{\riccig}{\operatorname{ric^{\gamma}}} \nc{\Rin}{\operatorname{M}}
\nc{\Le}{\operatorname{L}} \nc{\tang}{\operatorname{T}}
\nc{\level}{\operatorname{level}} \nc{\rad}{\operatorname{r}}
\nc{\abel}{\operatorname{ab}} \nc{\CH}{\operatorname{CH}}
\nc{\mcc}{\operatorname{mcc}} \nc{\Adj}{\operatorname{Adj}}
\nc{\Order}{\operatorname{O}}  \nc{\inj}{\operatorname{inj}} \nc{\proy}{\operatorname{proy}}
\nc{\vol}{\operatorname{vol}} \nc{\Diag}{\operatorname{Dg}}
\nc{\Spec}{\operatorname{Spec}} \nc{\Ima}{\operatorname{Im}} \nc{\Rea}{\operatorname{Re}}
\nc{\spann}{\operatorname{span}}
\theoremstyle{plain}
\newtheorem{theorem}{Theorem}[section]
\newtheorem{proposition}[theorem]{Proposition}
\newtheorem{corollary}[theorem]{Corollary}
\newtheorem{lemma}[theorem]{Lemma}
\theoremstyle{definition}
\theoremstyle{remark}
\newtheorem{example}[theorem]{Example}
\title{The Ricci pinching functional on solvmanifolds}
\author{Jorge Lauret} \author{Cynthia E. Will}
\address{Universidad Nacional de C\'ordoba, FaMAF and CIEM, 5000 C\'ordoba, Argentina}
\email{lauret@famaf.unc.edu.ar} \email{cwill@famaf.unc.edu.ar}
\thanks{This research was partially supported by grants from CONICET, FONCYT and Universidad Nacional de C\'ordoba}
\begin{document}

\maketitle

\begin{abstract}
We study the natural functional $F=\frac{\scalar^2}{|\Ricci|^2}$ on the space of all non-flat left-invariant metrics on all solvable Lie groups of a given dimension $n$.  As an application of properties of the beta operator, we obtain that solvsolitons are the only global maxima of $F$ restricted to the set of all left-invariant metrics on a given unimodular solvable Lie group, and beyond the unimodular case, we obtain the same result for almost-abelian Lie groups.  Many other aspects of the behavior of $F$ are clarified.
\end{abstract}

\tableofcontents

\section{Introduction}\label{intro}

Our aim is to study the behavior of the geometric functional
$$
F=\frac{\scalar^2}{|\Ricci|^2},
$$
on non-flat homogeneous Riemannian manifolds of a given dimension $n$.  Our motivation relies on the following elementary facts:

\begin{itemize}
  \item $F\leq n$ and equality holds precisely at Einstein metrics.

  \item $n-1<F(g)$ implies that $\Ricci_g$ is definite.

  \item If $M_\cca:=\sup F|_\cca $ for a particular class $\cca$ of homogeneous metrics, then any $g\in\cca$ can be at most $\sqrt{\frac{M_\cca}{n}}$-Ricci pinched.

  \item The infimum $m_\cca:=\inf F|_\cca $, on the other hand, provides the curvature estimate $|\Ricci_g|\leq m_\cca^{-\unm}\,|\scalar_g|$ for all $g\in\cca$.  The relevance of this estimate enhances when combined with the estimate $|\Riem|\leq C(n)|\Ricci|$ recently obtained in \cite{BhmLfnSmn} for any non-flat homogeneous metric, where $C(n)>0$ depends only on the dimension $n$ and $\Riem$ denotes the Riemannian curvature tensor.

  \item For the class $\cca_{G/K}$ of all $G$-invariant metrics on a homogeneous space $G/K$, the numbers $m_{\cca_{G/K}}$ and $M_{\cca_{G/K}}$ provide nice invariants of $G/K$ (up to equivariant diffeomorphism).
\end{itemize}

We focus in this paper on solvmanifolds, or more precisely, left-invariant metrics on solvable Lie groups.  Given a solvable Lie group $S$, let $\cca_S$ denote the set of all non-flat left-invariant metrics on $S$ and consider the invariants
$$
M_S:=\sup F|_{\cca_S}, \qquad m_S:=\inf F|_{\cca_S}.
$$
After some preliminary material in Section \ref{preli}, we review in Section \ref{RP-sec} what is known on the functional $F$ in the nilpotent and almost-abelian cases.  Examples of solvable Lie groups $S$ without any Einstein metric but admitting metrics as Ricci pinched as you want (i.e.\ $M_S=n$) are easily obtained by the Lie bracket variation method, as well as different kinds of examples with $m_S$ converging to zero.

Solvable Lie groups of {\it real type} (i.e.\ any adjoint map of its Lie algebra with all its eigenvalues imaginary is nilpotent) behaved specially well with respect to $F$.  As an application of results from \cite{NklNkn} and \cite{BhmLfn}, we show in Section \ref{gen} that they are geometrically characterized as follows.

\begin{theorem}\label{main0}
A solvable Lie group $S$ is of real type if and only if $m_S>0$.  In that case, $|\Riem_g|\leq C(S)|\scalar_g|$ for all $g\in\cca_S$, where $C(S)>0$ only depends on $S$.
\end{theorem}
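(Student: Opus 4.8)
The second assertion follows at once from the first: if $m_S>0$ then by definition $\scalar_g^2\ge m_S\,|\Ricci_g|^2$ for every $g\in\cca_S$, hence $|\Ricci_g|\le m_S^{-1/2}|\scalar_g|$, and combining this with the bound $|\Riem_g|\le C(n)|\Ricci_g|$ from \cite{BhmLfnSmn} gives $|\Riem_g|\le C(n)\,m_S^{-1/2}\,|\scalar_g|$, so one may take $C(S)=C(n)\,m_S^{-1/2}$. The entire content is therefore the equivalence ``$S$ of real type $\iff m_S>0$''. Recall that $F>0$ on $\cca_S$ (a non-flat solvmanifold has $\scalar\neq 0$), and that, fixing a background inner product, $\cca_S$ is described by the orbit of $S$'s Lie bracket under $\G$ acting on the space of Lie brackets, with $F$ a scaling-invariant function on that orbit; the question is whether $|\Ricci_\mu|/|\scalar_\mu|$ stays bounded along it.

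For ``real type $\Rightarrow m_S>0$'' I would invoke the quantitative non-degeneracy available in this case. Writing the Ricci operator of a solvmanifold in its standard form --- a moment-map term, a Killing-form term, and a mean-curvature term --- the real-type hypothesis is exactly what rules out the ``elliptic'' (rotational) pieces that could otherwise grow without bound relative to the strictly negative trace $\scalar_\mu$ as one moves to infinity along the $\G$-orbit. Concretely, \cite{BhmLfn}, together with the algebraic description of the admissible eigenvalue types in \cite{NklNkn}, yields a uniform estimate $|\scalar_g|\ge c(S)\,|\Ricci_g|$ on $\cca_S$ with $c(S)>0$, equivalently $m_S\ge c(S)^2>0$; in this picture the role of the beta operator is that for real type it remains non-degenerate along the whole orbit, so that the Ricci operator cannot escape a fixed region of the projectivized symmetric operators on which $\scalar$ controls $|\Ricci|$.

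For the converse I argue the contrapositive: if $S$ is not of real type, then $m_S=0$, exhibited by a sequence $g_k\in\cca_S$ with $F(g_k)\to 0$. The model is $S=\widetilde{E}(2)$, the universal cover of the group of orientation-preserving motions of the plane, with Lie algebra $\RR X\ltimes\RR^2$ and $\ad X$ acting as a rotation; taking the $\RR^2$-factor to be $\lambda$-anisotropic (with $\lambda=1$ giving the flat metric) one computes, in an orthonormal basis, $\Ricci_{g_\lambda}=\Diag(1-p,\,q,\,-q)$ with $p=\tfrac12(\lambda^2+\lambda^{-2})$ and $q=\tfrac12(\lambda^2-\lambda^{-2})$, so that $\scalar_{g_\lambda}=1-p=-\tfrac12(\lambda-\lambda^{-1})^2$ while $|\Ricci_{g_\lambda}|^2=(1-p)^2+2q^2\sim 2(\lambda-\lambda^{-1})^2$ as $\lambda\to 1$, whence $F(g_\lambda)\sim\tfrac18(\lambda-\lambda^{-1})^2\to 0$. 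In general, since $S$ is not of real type there is $X$ in its Lie algebra with $\ad X$ non-nilpotent and all its eigenvalues imaginary, i.e.\ with nonzero skew-symmetrizable semisimple part; one transplants the degeneration above into the plane on which that semisimple part acts by a rotation, shrinking simultaneously the remaining ``real'' non-flat directions of the metric --- using the structural normal form from \cite{NklNkn} --- so that the leading-order cancellation in $\scalar_{g_k}$ is preserved and $F(g_k)\to 0$. I expect this last step to be the main obstacle: one must check that the contributions of the rest of $\ggo$ to $\scalar$ and $\Ricci$ do not destroy the cancellation, and that the $g_k$ remain non-flat so the sequence really lies in $\cca_S$. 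By contrast, extracting the clean bound $m_S\ge c(S)^2$ from \cite{BhmLfn} is mostly a matter of bookkeeping with the scaling-invariance and with the behavior of the Ricci operator at infinity in the $\G$-orbit.
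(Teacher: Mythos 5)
Your reduction of the curvature estimate to $m_S>0$ via the gap theorem of \cite{BhmLfnSmn}, and your $\widetilde{E}(2)$ model computation, are both correct and agree with the paper. The two directions of the equivalence, however, are where the content lies, and in both your argument has a genuine gap. For ``real type $\Rightarrow m_S>0$'' you appeal to a ``quantitative non-degeneracy'' from \cite{BhmLfn} together with ``admissible eigenvalue types'' from \cite{NklNkn} and call the rest bookkeeping; no mechanism is supplied, and the one the paper uses is different and cannot be skipped. Since $F$ is invariant under scaling and $\Or(\sg)$, $m_\mu$ is controlled by the values of $F$ on the compact set $\overline{\Gl^+(\sg)\cdot\mu}\cap\{\lambda:|\lambda|=1\}$; the only obstruction to a positive infimum is that this set might meet $\sca_{flat}$, where $F$ is undefined (indeed $\scalar=0$ there while $\Ricci$ need not vanish in the limit). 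The decisive input is Lemma 3.4 of \cite{BhmLfn}: for $\mu\in\sca_\RR$ one has $\overline{\Gl(\sg)\cdot\mu}\cap\sca_{flat}=\{0\}$, so the normalized orbit closure is a compact subset of $\sca\smallsetminus\sca_{flat}$ and $m_\mu>0$ follows from continuity. The beta operator plays no role in this direction (it enters only in the unimodular analysis of Section 5), and \cite{NklNkn} is used for the opposite implication; as written, your claim that the Ricci operator ``cannot escape a fixed region'' is unsupported.

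For the converse you flag yourself the missing step: transplanting the $\widetilde{E}(2)$ degeneration into a general $S$ not of real type while controlling the contributions of the rest of $\ggo$ to $\scalar$ and $\Ricci$. That is exactly where the proof lies, and the paper removes the difficulty by degenerating first rather than working on $S$ directly: by the lemma of \cite{NklNkn}, the orbit closure $\overline{\Gl(\sg)\cdot\mu}$ contains a bracket $\lambda$ with $\lambda(\ag,\ag)=0$, $\lambda(\ngo,\ngo)=0$ and each $\ad_\lambda X$ semisimple with the same spectrum as $\ad_\mu X$; a further one-parameter limit kills every bracket except those involving a single vector $Y_1$ with $\Spec(\ad Y_1|_\ngo)\subset\im\RR$ nonzero, i.e.\ a sum of rotation blocks $B(a_j)$. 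On this degenerate bracket there is no interference left, and the perturbed blocks $B(a_j,\epsilon)$ have spectrum $\pm\sqrt{\epsilon^2-1}\,a_j$, hence are conjugate to $\sqrt{1-\epsilon^2}\,B(a_j)$, so the perturbed brackets remain in $\overline{\Gl(\sg)\cdot\mu}$ and an explicit computation gives $F\to0$. Finally the inclusion $F\bigl(\overline{\Gl(\sg)\cdot\mu}\bigr)\subset[m_\mu,M_\mu]$ converts this into $m_\mu=0$, so one never needs to realize the degenerating metrics as elements of $\cca_S$ itself. Without some such device your last step is not a verification to be ``checked'' but the heart of the matter, since the remaining non-flat directions can a priori contribute to $\scalar$ at the same order as the cancellation you need to preserve.
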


We consider in Section \ref{unim} the unimodular case.  Recall that unimodularity is mandatory for the existence of compact quotients.  Since a unimodular solvable Lie group $S$ can never admit a Ricci negative metric (see \cite{Dtt}), one always has that $M_S\leq n-1$.  A natural question is whether $M_{\cca_{unim}}=n-1$, where $\cca_{unim}$ is the class of all unimodular solvable Lie groups of dimension $n$ endowed with a non-flat left-invariant metric.  We provide a negative answer (see Corollary \ref{cor1} below).

The absence of Einstein metrics in $\cca_{unim}$ motivates another natural problem:
\begin{quote}
What kind of metrics can be global maxima of $F|_{\cca_S}$?
\end{quote}
Strong candidates for such a distinction are Ricci solitons, which are known to be the global maxima of $F|_{\cca_S}$ in the case of a nilpotent $S$ (see \cite{einsteinsolv}).  However, in the context of solvmanifolds, one finds the stronger concept of {\it solvsoliton} (i.e.\ $\Ricci_g=cI+D$, $c\in\RR$, $D\in\Der(\sg)$), which is in some sense a Ricci soliton well adapted to the algebraic side of $S$.  The underlying solvable Lie group of any solvsoliton is necessarily of real type and $F$ attains only finitely many values at solvsolitons of a given dimension (see \cite{solvsolitons}).

Our main tool in the unimodular case will be the {\it beta operator} introduced in \cite{standard} to study Einstein solvmanifolds and further developed in \cite{solvsolitons} for solvsolitons and in \cite{alek} for general homogeneous Ricci solitons (see \cite{BhmLfn2,BhmLfn} for recent applications).  The {\it beta operator} of $(S,g)\in\cca_{unim}$ is a symmetric operator
$\beta$ of the nilradical $\ngo$ of $\sg$ such that $\beta_+:=\beta+|\beta|^2I>0$, $\tr{\beta}=-1$ and the following estimate holds:
\begin{equation}\label{est-unim-intro}
\la\Ricci_g,E_\beta\ra\geq 0, \qquad\mbox{where}\quad E_\beta:=\left[\begin{smallmatrix} 0&\\&\beta_+ \end{smallmatrix}\right] \in\glg(\sg),
\end{equation}
is written in terms of the orthogonal decomposition $\sg=\ag\oplus\ngo$.  More importantly, equality holds in \eqref{est-unim-intro} if and only if $E_\beta\in\Der(\sg)$, which is equivalent to the following structural conditions:
\begin{equation}\label{Ebeta-eq}
[\ag,\ag]=0, \qquad [\beta,\ad{\ag}|_\ngo]=0 \quad\mbox{and} \quad \beta_+\in\Der(\ngo).
\end{equation}
The beta operator is determined by the instability properties of the Lie bracket of $\ngo$ under the natural $\Gl(\ngo)$-action.  There are only finitely many possibilities for the spectrum of $\beta$, which is contained in $\QQ$ and depends only on the nilradical $\ngo$ of $\sg$, actually only on the stratum to which $\ngo$ belongs relative to the stratification of the variety of nilpotent Lie algebras defined in \cite{standard}.  We refer to Section \ref{unim} for a detailed treatment and to \cite{beta} for an updated overview of the beta operator of a general homogeneous space.

The following is our first main result.

\begin{theorem}\label{main1}
The class $\cca_{unim}$ can be partitioned in finitely many disjoint subclasses $\cca_{(m,q)}$, each one defined by a pair $(m,q)\in\NN\times\QQ$, $1\leq m\leq n$, $0\leq q<m-1$, such that the following holds:
\begin{itemize}
\item[(i)] Each class $\cca_S$ is contained in a single $\cca_{(m,q)}$.

\item[(ii)] $M_{\cca_{(m,q)}}=n-m+q<n-1$.

\item[(iii)] $M_S=n-m+q$ for any $(S,g)\in\cca_{(m,q)}$ with $S$ of real type.

\item[(iv)] $F(S,g) = n-m+q$ for some $(S,g)\in\cca_{(m,q)}$ if and only if the simply connected cover of $(S,g)$ is a solvsoliton.

\item[(v)] Solvsolitons and their quotients are the only global maxima of $F|_{\cca_{(m,q)}}$ and $F|_{\cca_S}$.
\end{itemize}
\end{theorem}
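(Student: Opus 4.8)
The plan is to extract everything from the estimate \eqref{est-unim-intro} by one Cauchy--Schwarz inequality and then to read off the combinatorics from the finiteness of the possible spectra of $\beta$. Fix $(S,g)\in\cca_{unim}$ with $\sg=\ag\oplus\ngo$, and set $E:=E_\beta$, $R:=\Ricci_g$, $R_0:=R-\tfrac{\scalar_g}{n}I$, $\tilde E:=E-\tfrac{\tr E}{n}I$ and $d:=\dim\ngo$. From $\tr\beta=-1$ one computes $\tr E=\tr\beta_+=d|\beta|^2-1$ and $|E|^2=|\beta_+|^2=|\beta|^2(d|\beta|^2-1)$, so that $\tfrac{(\tr E)^2}{|E|^2}=d-\tfrac1{|\beta|^2}=:v$ depends only on the stratum of $\ngo$. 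A non-flat unimodular solvmanifold has $\scalar_g<0$, hence $\la R_0,\tilde E\ra=\la R,E\ra-\tfrac{\tr E}{n}\scalar_g\geq-\tfrac{\tr E}{n}\scalar_g>0$ by \eqref{est-unim-intro}; combined with $\la R_0,\tilde E\ra^2\leq|R_0|^2|\tilde E|^2$ this gives $\tfrac{|R_0|^2}{\scalar_g^2}\geq\tfrac{(\tr E)^2}{n^2|\tilde E|^2}$, and plugging this into $F(g)=\tfrac{n}{1+n|R_0|^2/\scalar_g^2}$ and simplifying with $n|\tilde E|^2+(\tr E)^2=n|E|^2$ yields the universal bound $F(g)\leq n-v$.

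When $\ngo$ is non-abelian, $\beta_+>0$ acts on a space of dimension $d\geq 2$, so $(\tr\beta_+)^2>|\beta_+|^2$, i.e. $v>1$; moreover $\Spec(\beta)\subset\QQ$ and takes finitely many values, being an invariant of the stratum of $\ngo$ and there being finitely many strata by \cite{standard}. Thus $v$ ranges over a finite subset of $\QQ\cap(1,n]$, and setting $m:=\lceil v\rceil\in\{2,\dots,n\}$, $q:=m-v\in[0,1)$ attaches to $S$ a pair with $0\leq q<m-1$; letting $\cca_{(m,q)}$ consist of all $(S',g')\in\cca_{unim}$ with this pair defines the partition, giving (i) at once and $F\leq n-m+q<n-1$ on $\cca_{(m,q)}$, which is the upper bound in (ii). (Groups with abelian nilradical, where $\beta$ is not directly available, are placed into classes by a separate, more elementary analysis.)

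If $F(g)=n-m+q=n-v$ then both estimates above are equalities: equality in Cauchy--Schwarz forces $R_0$ proportional to $\tilde E$, hence $\Ricci_g=cI+tE_\beta$ for some $c,t\in\RR$, while equality in $\la R,E_\beta\ra\geq0$ means $\la\Ricci_g,E_\beta\ra=0$, equivalent by \eqref{est-unim-intro} to $E_\beta\in\Der(\sg)$, i.e. to \eqref{Ebeta-eq}. So $\Ricci_g=cI+D$ with $D=tE_\beta\in\Der(\sg)$: $(S,g)$, equivalently its simply connected cover, is a solvsoliton. Conversely, by \cite{solvsolitons} a solvsoliton satisfies \eqref{Ebeta-eq} and has $\Ricci_g=cI+tE_\beta$ with $E_\beta\in\Der(\sg)$ and $\la\Ricci_g,E_\beta\ra=0$, so both equalities hold and $F(g)=n-v$; this is (iv). Since $n-m+q=\max F|_{\cca_{(m,q)}}$ it follows that the global maxima of $F|_{\cca_{(m,q)}}$ are exactly the solvsolitons and their quotients; and a global maximum of $F|_{\cca_S}$ is a critical metric, hence a solvsoliton by the soliton-rigidity of critical points of $F$ on a fixed unimodular solvable group (via \cite{alek} and the associated bracket flow), so $M_S=n-m+q$ precisely when $S$ admits one --- this is (v).

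It remains to show the bound is sharp and to prove (iii). For $S$ of real type with $\ngo$ in the stratum $\beta$, I would use \cite{NklNkn,einsteinsolv,solvsolitons} and the normalized bracket flow on $S$, which subconverges to a solvsoliton whose nilradical again lies in the stratum of $\ngo$; continuity of $F$ then gives that $F$ approaches $n-m+q$ along the flow, so $M_S=n-m+q$, which is (iii). Finally, every nonempty $\cca_{(m,q)}$ contains a real-type group: replacing the semisimple parts of the operators $\ad{\ag}|_\ngo$ by their real parts preserves the dimension, unimodularity, the nilradical, and hence the pair $(m,q)$. By (iii) this gives $M_{\cca_{(m,q)}}\geq n-m+q$, matching the upper bound and completing (ii). The main obstacle is this sharpness step for a fixed real-type $S$: the upper bound and the equality analysis are essentially forced by \eqref{est-unim-intro} and Cauchy--Schwarz, but controlling the limiting solvsoliton --- ensuring no value is lost and the nilradical does not grow --- requires the finer deformation theory of real-type solvmanifolds, with the rigidity input for the $\cca_S$ part of (v) a secondary difficulty.
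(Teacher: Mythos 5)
Your core argument is the paper's: the bound in (ii) is obtained in Section \ref{unim} by pairing $\Ricci$ with $\beta_\sg=-|\beta|^2I+E_\beta$ and using \eqref{est-unim-intro} plus Cauchy--Schwarz (your traceless-parts computation with $R_0$ and $\tilde E$ is the same estimate, since $n-v=n-m+\tfrac{1}{|\beta|^2}$), the equality analysis for (iv) is exactly the combination of the Cauchy--Schwarz equality case, the equality condition \eqref{Ebeta-eq}, and the structure theorem \eqref{soliton} from \cite{solvsolitons}, and the sharpness for real type in (iii) is, in the paper, precisely a citation of \cite[Theorem 5.1]{BhmLfn} --- the bracket-flow convergence you sketch (and correctly flag as the hard input) is that result, so you should cite it rather than \cite{NklNkn,einsteinsolv}. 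The relabeling of classes by $(\lceil v\rceil,\lceil v\rceil-v)$ instead of $(\dim\ngo,|\beta|^{-2})$ is harmless, and the abelian-nilradical case you set aside is handled in the paper simply by $\beta_+=I$ and the bound $F\leq n-m$.

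There are, however, two genuine gaps. First, your sharpness argument for (ii) (``replace the semisimple parts of $\ad\ag|_\ngo$ by their real parts'') does not preserve the pair $(m,q)$: if some $\ad X|_\ngo$ has nonzero purely imaginary semisimple part, the replacement makes $\ad X$ nilpotent, so $X$ is absorbed into the nilradical of the modified algebra and $m$ (hence $q$) changes; one must also check that the modified brackets still satisfy Jacobi. The paper instead obtains sharpness from the fact that each nonempty $\cca_{(m,q)}$ contains a solvsoliton, which attains the bound by (iv). Second, for the $\cca_S$ part of (v) you invoke a ``soliton-rigidity of critical points of $F$'' on a fixed unimodular group; no such rigidity exists and it is contradicted in spirit by Section \ref{alm-abel-2} of the paper, where continuous families of critical points of $F$ restricted to conjugacy classes --- including traceless (unimodular) $3\times 3$ examples --- are exhibited that are not solvsolitons (see the example following Proposition \ref{critFaa}). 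The argument consistent with the paper is: for $S$ of real type, $M_S=n-m+q$ by (iii), so any global maximum of $F|_{\cca_S}$ attains the class bound and is therefore a solvsoliton by the equality case of \eqref{bound}; criticality alone gives you nothing here.
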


The partition of $\cca_{unim}$ is defined as follows: $(S,g)\in\cca_{(m,q)}$ if and only if $m$ is the dimension of the nilradical $\ngo$ of $\sg$ and $q=|\beta|^{-2}$, where $\beta$ in the beta operator of $(S,g)$.  If $\ngo$ is abelian, then $q=0$.  Each $\cca_{(m,q)}$ contains at least one solvsoliton.  Note that $m=n$ if and only if $S$ is nilpotent.

\begin{corollary}\label{cor1}
$M_{\cca_{unim}} = \max\{ n-m+q:\cca_{(m,q)}\ne\emptyset\} < n-1$.
\end{corollary}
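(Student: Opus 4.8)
The plan is to read this off directly from Theorem \ref{main1}, so the proof will be short. First, recall that by Theorem \ref{main1} the class $\cca_{unim}$ is the disjoint union of the finitely many nonempty classes $\cca_{(m,q)}$. Hence the supremum of $F$ over $\cca_{unim}$ splits as
$$
M_{\cca_{unim}} = \sup F|_{\cca_{unim}} = \max\{\, \sup F|_{\cca_{(m,q)}} : \cca_{(m,q)}\ne\emptyset \,\} = \max\{\, M_{\cca_{(m,q)}} : \cca_{(m,q)}\ne\emptyset \,\},
$$
where replacing $\sup$ by $\max$ over the index set is legitimate precisely because that set is finite. Now I would invoke part (ii) of Theorem \ref{main1}, which gives $M_{\cca_{(m,q)}} = n-m+q$ for each nonempty piece, to obtain the asserted identity $M_{\cca_{unim}} = \max\{ n-m+q : \cca_{(m,q)}\ne\emptyset\}$.

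For the strict inequality $M_{\cca_{unim}} < n-1$, I would again use part (ii): each summand satisfies $n-m+q < n-1$, and a maximum of finitely many real numbers, each $< n-1$, is itself $< n-1$. (The index set is nonempty since $\cca_{unim}$ contains, e.g., any non-abelian nilpotent Lie group with any non-flat left-invariant metric.) It is worth emphasizing that finiteness of the partition is doing all the work here: for an infinite family of strata the values $n-m+q$ could a priori accumulate at $n-1$, and the strict inequality would fail for the supremum.

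There is essentially no obstacle in this deduction; the only point that deserves a word is that the maximum is genuinely attained rather than being a mere supremum, which is immediate once one knows --- as recorded in Section \ref{unim} and used in Theorem \ref{main1} --- that the spectrum of the beta operator, hence $q=|\beta|^{-2}$, ranges over a finite subset of $\QQ$ depending only on the stratification of the variety of nilpotent Lie algebras. All the real content of the corollary, namely the negative answer to the question of whether $M_{\cca_{unim}}=n-1$, is already contained in Theorem \ref{main1}(ii).
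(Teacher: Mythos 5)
Your proposal is correct and matches the paper's intended argument: Corollary \ref{cor1} is a direct consequence of Theorem \ref{main1}, obtained exactly as you do by combining the finiteness of the partition of $\cca_{unim}$ into the classes $\cca_{(m,q)}$ with part (ii), $M_{\cca_{(m,q)}}=n-m+q<n-1$, so that the supremum over the union is the maximum of finitely many values each strictly below $n-1$. Your emphasis on finiteness being the essential point is precisely the content the paper relies on.
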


\begin{corollary}\label{cor2}
Let $S$ be a unimodular solvable Lie group with nilradical $N$.  Then for any left-invariant metric $g$ on $S$,
$$
F(g) \leq \dim{S}-\dim{N}+M_N,
$$
where equality holds if and only if $g$ is a solvsoliton when pulled up to the simply connected cover of $S$.  If in addition $S$ is of real type, then $M_S=\dim{S}-\dim{N}+M_N$.
\end{corollary}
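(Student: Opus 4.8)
The plan is to read everything off Theorem \ref{main1} by applying it twice: once to $S$ itself, and once to its nilradical $N$; the bridge between the two is the fact, recalled in Section \ref{unim}, that the beta operator (and hence the rational number $q$ entering the partition) is an invariant of the nilradical alone. Concretely, let $g$ be a left-invariant metric on $S$; we may assume $g$ is non-flat, i.e.\ $g\in\cca_S$, since otherwise $F(g)$ is undefined and there is nothing to prove. Put $n=\dim S$ and $m=\dim N=\dim\ngo$, where $\ngo$ is the nilradical of $\sg$. By the definition of the partition, $(S,g)\in\cca_{(m,q)}$ with $q=|\beta|^{-2}$, where $\beta$ is the beta operator of $(S,g)$, a symmetric operator on $\ngo$ whose spectrum --- and therefore whose norm $|\beta|^2$ --- depends only on the stratum of $\ngo$. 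Theorem \ref{main1}(ii) then gives $F(g)\leq M_{\cca_{(m,q)}}=n-m+q$.

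The next step, which is really the only thing to check, is the identification $q=M_N$. View $N$ as a nilpotent, hence unimodular solvable, Lie group of dimension $m$; its nilradical is $N$ itself, with the same Lie algebra $\ngo$, so for any $h\in\cca_N$ the beta operator of $(N,h)$ is again an operator on $\ngo$ with the same spectrum as $\beta$, i.e.\ $(N,h)\in\cca_{(m,q)}$ when Theorem \ref{main1} is applied in dimension $m$. Since every $\ad$-map of a nilpotent Lie algebra is nilpotent, $N$ is of real type, so Theorem \ref{main1}(iii) applies to $N$ and yields $M_N=m-m+q=q$; when $\ngo$ is abelian this simply reads $q=0=M_N$, in accordance with the convention used in the partition and with the fact that a non-abelian nilpotent group carries no flat metric (so the case $\cca_N=\emptyset$ occurs only for abelian $N$). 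Substituting $q=M_N$ into the inequality of the first paragraph gives $F(g)\leq n-m+q=\dim S-\dim N+M_N$.

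Finally, the equality statement and the last sentence follow directly from Theorem \ref{main1}: by part (iv), for $(S,g)\in\cca_{(m,q)}$ one has $F(g)=n-m+q$ if and only if the simply connected cover of $(S,g)$ is a solvsoliton, which is exactly the asserted equality condition (part (v) gives the same conclusion), and if $S$ is itself of real type then part (iii) applied to $S$ gives $M_S=n-m+q=\dim S-\dim N+M_N$. No genuine obstacle arises here --- the entire analytic content is already contained in Theorem \ref{main1} --- and the one point that must not be taken for granted is the intrinsic character of the beta operator: the stratum, hence the spectrum of $\beta$, is the same whether $\ngo$ is regarded as the nilradical of $\sg$ or as the Lie algebra of $N$, together with the harmless bookkeeping of the abelian nilradical case.
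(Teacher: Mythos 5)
Your proposal is correct and follows essentially the paper's own route: the corollary is deduced from Theorem \ref{main1} (equivalently, the estimate \eqref{bound} and its equality case) together with the fact that the spectrum of $\beta$ depends only on the nilradical, the identification $q=M_N$ being exactly the nilpotent-case statement $M_N=1/|\beta|^2$ recalled in Section \ref{nilp} (which you re-derive by applying Theorem \ref{main1}(iii) to $N$, legitimate since nilpotent groups are of real type), with the abelian-nilradical convention $q=0=M_N$ handled as in the paper.
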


As a first step beyond the unimodular case, we study in greater depth in Section \ref{alm-abel-2} the behavior of $F$ on {\it almost-abelian} Lie groups (i.e.\ with a codimension one abelian ideal).  The set of isomorphism classes of $n$-dimensional simply connected almost-abelian Lie groups is parameterized by the space of $(n-1)\times (n-1)$-matrices up to conjugation and nonzero scaling.  The matrix $A_S$ attached to $S$ is the matrix of any nonzero adjoint map of $\sg$ restricted to the abelian ideal.  In particular, $S$ is unimodular if and only if $\tr{A_S}=0$, it is nilpotent if and only if $A_S$ is a nilpotent matrix and $S$ is non-nilpotent of real type if and only if $\Spec(A_S)$ is not completely imaginary.  The existence of special metrics on an almost-abelian $S$ can also be characterized in terms of $A_S$:
\begin{itemize}
  \item $S$ admits an Einstein metric if and only if the real semisimple part of $A_S$ is a multiple of the identity.

  \item \cite{Arr} There is a solvsoliton on $S$ if and only if $A_S$ is either semisimple or nilpotent.

  \item \cite{Jbl2} $S$ admits a Ricci soliton which is not a solvsoliton if and only if $\Spec(A_S)\subset\im\RR$ and $A_S$ is neither semisimple nor nilpotent.
\end{itemize}

We now summarize our main results in this case.

\begin{theorem}\label{main2}
Let $S$ be a simply connected almost-abelian Lie group of dimension $n$.
\begin{itemize}
  \item[(i)] If $g\in\cca_S$ is a local maximum of $F|_{\cca_S}$, then $g$ is a Ricci soliton.

  \item[(ii)] $g\in\cca_S$ is a global maximum of $F|_{\cca_S}$ if and only if $g$ is a solvsoliton.

  \item[(iii)] Suppose that $S$ admits a Ricci soliton which is not a solvsoliton.  Then $S$ admits Ricci solitons which are not local maxima of $F|_{\cca_S}$.

  \item[(iv)] Assume that $S$ is of real type and does not admit a solvsoliton.  Then $M_S=F(g_0)$, where $g_0$ is the solvsoliton on the almost-abelian Lie group $S_0$ corresponding to the semisimple part $A_{S_0}$ of $A_S$.
\end{itemize}
\end{theorem}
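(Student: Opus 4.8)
The first step is to turn the problem into a variational one on a single orbit of matrices. Fix $\sg=\RR H\oplus\mathfrak{a}$ with $\mathfrak{a}$ the codimension one abelian ideal, and take the reference metric so that $H$ is a unit vector orthogonal to $\mathfrak{a}$. By the moving bracket trick, deforming the left-invariant metric on $S$ is the same as keeping this metric fixed and letting $A:=\ad H|_{\mathfrak{a}}$ vary over $\Gl_{n-1}(\RR)\cdot A_S$, modulo the positive rescalings of $H$. With $\sym(A)=\tfrac{1}{2}(A+A^t)$ the symmetric part of $A$ relative to the fixed metric on $\mathfrak{a}$, the curvature formulas recalled in Section~\ref{RP-sec} give $\scalar=-|\sym(A)|^2-(\tr A)^2$ and, in the splitting $\sg=\RR H\oplus\mathfrak{a}$,
\[
\Ricci=\operatorname{diag}\Bigl(-|\sym(A)|^2,\ \tfrac{1}{2}[A,A^t]-(\tr A)\,\sym(A)\Bigr),
\]
so that
\[
F(A)=\frac{\bigl(|\sym(A)|^2+(\tr A)^2\bigr)^2}{|\sym(A)|^4+\bigl|\tfrac{1}{2}[A,A^t]-(\tr A)\,\sym(A)\bigr|^2}
\]
is an $\Or(n-1)$-invariant, scale-invariant function on $\Gl_{n-1}(\RR)\cdot A_S$ whose set of values equals that of $F|_{\cca_S}$.

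For (i) and (ii) I would start from the first variation $\tfrac{d}{ds}\big|_{0}F(e^{sE}Ae^{-sE})=dF_A([E,A])$, $E\in\glg_{n-1}(\RR)$; by $\Or(n-1)$-invariance only symmetric $E$ matter, and the critical point condition says that the orbit-tangential component of $\Ricci_g-\tfrac{|\Ricci_g|^2}{\scalar_g}I$ vanishes. Since the orthogonal complement of the orbit-tangential metric deformations is built from the symmetric parts of derivations, solvsolitons solve this, but a priori not only them; the content of (i) is to upgrade a \emph{local maximum} to a Ricci soliton. For this I would use real geometric invariant theory: restricting the logarithm of $F$, or an associated Kempf--Ness type function for the $\Gl_{n-1}(\RR)$-action, to the geodesics $s\mapsto e^{sE}Ae^{-sE}$ of the nonpositively curved symmetric space $\Gl_{n-1}(\RR)/\Or(n-1)$, a point that is a local maximum along every such geodesic must be a distinguished (minimal) vector, and for almost-abelian brackets these correspond to Ricci soliton metrics --- solvsolitons when $S$ admits them, Jablonski's semi-algebraic Ricci solitons otherwise --- using the descriptions of \cite{Arr} and \cite{Jbl2}. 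For (ii): ``$\Rightarrow$'' follows by combining (i) with the fact (proved en route to (iii)) that a non-solvsoliton Ricci soliton is not a critical point of $F|_{\cca_S}$, so a global maximum must be a solvsoliton; ``$\Leftarrow$'' is the assertion that a solvsoliton realizes $M_S$, which for nilpotent $S$ is \cite{einsteinsolv}, and for $A_S$ semisimple follows because the solvsoliton attains equality in the almost-abelian analogue of the estimate \eqref{est-unim-intro}, hence equals the bound attached to its stratum.

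For (iii) and (iv) the tool is degeneration inside $\overline{\RR^{*}\,\Gl_{n-1}(\RR)\cdot A_S}$ via the Jordan decomposition $A_S=A_{\mathrm{s}}+A_{\mathrm{n}}$ ($A_{\mathrm{s}}$ semisimple, $A_{\mathrm{n}}$ nilpotent, $[A_{\mathrm{s}},A_{\mathrm{n}}]=0$). Conjugating by one-parameter groups of grading automorphisms of the Jordan blocks and rescaling, $A_{\mathrm{s}}+t\,A_{\mathrm{n}}$ stays in $\Gl_{n-1}(\RR)\cdot A_S$ for $t\neq0$ and tends to $A_{\mathrm{s}}$ as $t\to0$, while $s\,A_{\mathrm{s}}+A_{\mathrm{n}}$ tends to $A_{\mathrm{n}}$. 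In (iv), $S$ of real type with no solvsoliton forces $A_{\mathrm{n}}\neq0$ and $A_{S_0}=A_{\mathrm{s}}$ semisimple with $\Spec A_{\mathrm{s}}\not\subset\im\RR$, so $S_0$ is of real type and carries a solvsoliton $g_0$ with $F(g_0)=M_{S_0}$ by (ii); running the first degeneration along the path through the solvsoliton point of the $A_{\mathrm{s}}$-orbit yields $M_S\geq F(g_0)$. For the reverse inequality I would invoke the beta operator of $S$ in the general sense (\cite{beta}, and Section~\ref{unim} in the unimodular case): for an almost-abelian Lie algebra it depends on $A_S$ only through its semisimple part, so the a priori Ricci pinching bound it gives for $S$ coincides with the one for $S_0$, which by (ii) is sharp; hence $M_S\leq M_{S_0}=F(g_0)$. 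Part (iii) is then a direct computation: if $g$ is a non-solvsoliton Ricci soliton then $\Spec A_S\subset\im\RR$ and $A_S$ is neither semisimple nor nilpotent, and writing $B_{\mathrm{s}}+B_{\mathrm{n}}$ for the matrix of $g$ (with $B_{\mathrm{s}}$ skew and commuting with $B_{\mathrm{n}}$, by \cite{Jbl2}), the formula for $F(A)$ shows that $F$ restricted to the curves $u\mapsto Q_u(B_{\mathrm{s}}+B_{\mathrm{n}})Q_u^{-1}\in\cca_S$, obtained by moving the nilpotent part relative to $B_{\mathrm{s}}$, is not locally maximized at $g$, so $g$ is not even a critical point of $F|_{\cca_S}$.

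The main obstacle is the upper bound in (iv), equivalently the forward implication in (ii): controlling $F$ over the \emph{entire} non-compact orbit $\Gl_{n-1}(\RR)\cdot A_S$, not just near critical points. Since $\Gl_{n-1}(\RR)\cdot A_{\mathrm{s}}$ lies in the boundary of $\Gl_{n-1}(\RR)\cdot A_S$, there is no cheap semicontinuity giving $M_S\leq M_{S_0}$; one genuinely needs the beta operator (or its associated stratification) together with the sharpness statement (ii) for $S_0$ to see that the optimal Ricci pinching of an almost-abelian Lie group is insensitive to replacing $A_S$ by its semisimple part. A secondary, more technical point is the GIT/second-variation step in (i): pinning down the Kempf--Ness functional that is geodesically convex and checking that its local maxima along all geodesics are exactly the Ricci soliton metrics and not merely the critical points of $F$.
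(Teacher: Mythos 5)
Your overall setup (moving-bracket reduction to the conjugation orbit of $A_S$ and the explicit formula for $F$, which is \eqref{Faa}) matches the paper, and your degeneration $A_{\mathrm s}+tA_{\mathrm n}\to A_{\mathrm s}$ for the lower bound in (iv) is exactly the paper's mechanism. But the core of (i)--(iii) has a genuine gap. Your plan rests on two claims: that local maxima of $F$ along the geodesics $s\mapsto e^{sE}Ae^{-sE}$ are detected by a Kempf--Ness type convexity and correspond to GIT-distinguished vectors, and that a non-solvsoliton Ricci soliton is not even a critical point of $F|_{\cca_S}$. Both fail. $F$ (or $\log F$) is not a moment-map-norm/Kempf--Ness functional and has no convexity along these geodesics; the paper in fact exhibits continuous families of critical points of $F|_{\cca(A)}$ that are neither solvsolitons nor Ricci solitons, and, conversely, the non-solvsoliton Ricci solitons $A=N+C$ ($N$ a nilsoliton, $C$ skew, $[N,C]=0$) of Example \ref{RS} \emph{are} critical points of $F|_{\cca(A)}$ (they satisfy condition (ii) of Proposition \ref{critFaa} because $[A,A^t]=[N,N^t]$, etc.), even though they are not minimal vectors nor critical points of $|[B,B^t]|/|B|^2$. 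So your argument for (iii), and the forward implication of (ii) which you base on it, collapses: along your curve the function is in fact constant ($F(uN+C)=F(N)$ by isometry and scale invariance), and ruling out local maximality requires the second variation. The paper's route is: compute the Hessian at a critical point (Lemma \ref{Fseg}), test the direction $B=[A,A^t]$ (Lemma \ref{AAt}) to force $\tr A=0$ and $[A,[A,[A,A^t]]]=0$ at any non-normal local maximum, and solve the critical-point equation to get $A=N+C$ (Proposition \ref{only}), which gives (i); for (iii) one shows that $A_u=uN+C$ has a direction of positive second variation for $u$ large (only the large-$u$ solitons are handled --- the stronger statement you aim at is left open in the paper); and (ii)$\Rightarrow$ follows since for $A$ neither semisimple nor nilpotent $F|_{\cca(A)}$ has no global maximum at all (Theorem \ref{solv-max}(iii)), so a global maximum must be a solvsoliton.

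The upper bound in (iv) is the second gap. The beta operator depends only on the nilradical, and every non-nilpotent almost-abelian algebra has the same abelian nilradical $\RR^{n-1}$; hence any bound attached to it is insensitive to $A_{\mathrm s}$ and cannot yield the sharp, $A_{\mathrm s}$-dependent value $F(g_0)$ (moreover the estimate \eqref{bound} is established only for unimodular $S$, and \cite{beta} is not available as a quotable inequality here). The paper's argument is elementary and bypasses this: on $\overline{\cca(A)}$ the invariants $\tr B$ and $\tr B^2$ are constant, so $F$ takes the form \eqref{Faa-2}; discarding the $|[B,B^t]|^2$ term and using that the normal matrices in the unique closed orbit $\cca(A_{\mathrm s})\subset\overline{\cca(A)}$ minimize the norm \eqref{mmA-1} gives $F(B)\leq F(A_0)$ for all $B\in\overline{\cca(A)}$, with equality only at normal matrices (real type guarantees $A_0$ is not skew-symmetric, so $F(A_0)$ is defined); this yields $M_S=F(g_0)$ and non-attainment simultaneously. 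Your ``$\Leftarrow$'' in (ii) for semisimple $A_S$ should likewise be run through \eqref{Faa-2}, \eqref{mmA-1} (and through \eqref{mmA-2} in the nilpotent case) rather than through an unproved almost-abelian analogue of \eqref{est-unim-intro}.
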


The proof involves the computations of the gradient of $F|_{\cca_S}$ and its second variation, as well as an application of geometric invariant theory properties of the conjugation of matrices.  Unexpected critical points of $F|_{\cca_S}$ showed up.  We note that part (ii) generalizes Theorem \ref{main1} (iv), while the behavior of Ricci solitons as potential local maxima described in parts (i) and (iii) is very intriguing.  One is certainly minded to believe that these facts will hold true in a more general context.  The existence of a Ricci soliton being a local maximum of $F|_{\cca_S}$ but not global remains open.

\vs \noindent {\it Acknowledgements.} The authors are grateful to Ramiro Lafuente for very helpful comments.

\section{Preliminaries}\label{preli}

We fix an $n$-dimensional real vector space $\sg$ endowed with an inner product $\ip$.  Let $\sca\subset\Lambda^2\sg^*\otimes\sg$ denote the algebraic subset of all Lie brackets on $\sg$ which are solvable.  Each $\mu\in\sca$ is identified with the left-invariant metric determined by $\ip$ on the simply connected solvable Lie group $S_\mu$ with Lie algebra $(\sg,\mu)$:
$$
\mu \longleftrightarrow (S_\mu,\vp).
$$
In this way, the isomorphism class $\Gl(\sg)\cdot\mu$ is identified with the set of all left-invariant metrics $S_\mu$ as follows:
$$
(S_{h\cdot\mu},\ip) \longleftrightarrow (S_\mu,\la h\cdot,h\cdot\ra), \qquad\forall h\in\Gl(\sg).
$$
Note that $h^{-1}$ is an isometric isomorphism determining an isometry between these two Riemannian manifolds.

Consider the following $\Gl(\sg)$-invariant subsets of $\sca$:
\begin{align*}
\sca_{\im\RR} :=& \left\{\mu\in\sca:\Spec(\ad_\mu{X})\subset\im\RR, \;\forall X\in\sg\right\}, \\
\sca_{\RR}  :=& \left\{\mu\in\sca:\mbox{either}\, \ad_\mu{X} \, \mbox{is nilpotent or}\, \Spec(\ad_\mu{X})\nsubseteq\im\RR, \;\forall X\in\sg\right\}, \\
\sca_{c\RR} :=& \left\{\mu\in\sca:\Spec(\ad_\mu{X})\subset\RR, \;\forall X\in\sg\right\}, \\
\sca_{unim} :=& \left\{\mu\in\sca:\tr{\ad_\mu X}=0,  \;\forall X\in\sg\right\}, \\
\nca :=& \left\{\mu\in\sca: \mu \; \mbox{is nilpotent}\right\}.
\end{align*}

The Lie algebras in $\sca_{\im\RR}$ and $\sca_\RR$ are called of {\it imaginary} and {\it real type} (or type-R), respectively (see e.g.\ \cite[Section 3]{BhmLfn}, or \cite{Jbl1}, where real type is called {\it almost completely solvable}).  The closed subset $\sca_{c\RR}$ is known in the literature as the class of {\it completely real} or {\it completely solvable} Lie algebras, and $\sca_{unim}$ is the subset of {\it unimodular} solvable Lie algebras.  It easily follows that $\sca_{\im\RR}$ is closed, $\sca_{\RR}\smallsetminus\nca$ is open in $\sca$ and $\sca_{\im\RR}\cap\sca_\RR=\nca$.  We also consider the subset
$$
\sca_{flat} := \left\{\mu\in\sca:(S_\mu,\ip)\;\mbox{is flat}\right\}.
$$
It was proved in \cite{Mln} that $\mu\in\sca_{flat}$ if and only if there is a decomposition $\sg=\ag\oplus\ngo$ such that $\mu(\ag,\ag)=0$, $\mu(\ngo,\ngo)=0$, $\mu(\ag,\ngo)\subset\ngo$ and $(\ad_\mu{X})^t=-\ad_\mu{X}$ for all $X\in\ag$.  Thus $\sca_{flat}\subset\sca_{\im\RR}$, it is only $\Or(\sg)$-invariant and $\Gl(\sg)\cdot\sca_{flat}$ consists of those $\mu\in\sca_{\im\RR}$ for which there exists a decomposition $\sg=\ag\oplus\ngo$ as above such that $\ad_\mu{X}$ is a semisimple operator for all $X\in\ag$.  Note that the Lie groups with Lie algebras in $\Gl(\sg)\cdot\sca_{flat}$ are precisely those admitting a flat metric and that $\Gl(\sg)\cdot\sca_{flat}\cap\sca_\RR=\{ 0\}$.  The following inclusions follow from the definitions,
$$
\{ 0\}\subset\sca_{flat}\subset\Gl(\sg)\cdot\sca_{flat}\subset\sca_{\im\RR}\subset\sca_{unim}, \qquad \{ 0\}\subset\nca\subset\sca_{c\RR}\subset\sca_\RR.
$$
It is worth to recall that $\mu,\lambda\in\sca_{c\RR}$ are isometric if and only if they belong to the same $\Or(\sg)$-orbit (see \cite{Alk}).  If one of them is not in the class $\sca_{c\RR}$, then they can be isometric without even being isomorphic.  On the other hand, the isometry group of any $\mu\in\sca_{c\RR}\cap\sca_{unim}$ is given by
$$
\Iso(S_\mu,\ip)=\left(\Aut(\mu)\cap\Or(\sg)\right)\ltimes S_\mu,
$$
which is equivalent to say that any isometry fixing the identity must be an automorphism of the Lie group (see \cite{GrdWls}).  Note that $\nca\subset \sca_{c\RR}\cap\sca_{unim}$.

An element $\mu\in\sca$ is called a {\it solvsoliton} when its Ricci operator satisfies that $\Ricci_\mu=cI+D$ for some $c\in\RR$ and $D\in\Der(\mu)$.  Any solvsoliton belongs to $\sca_\RR$ (see \cite[Theorem 4.8]{solvsolitons}) and any Ricci soliton metric on a solvable Lie group is isometric to a solvsoliton (see \cite[Theorem 1.1]{Jbl2}).  Moreover, any semi-algebraic soliton on a solvable Lie group (i.e.\ $\Ricci_\mu=cI+D+D^t$, $c\in\RR$, $D\in\Der(\mu)$) is a solvsoliton (see \cite[Lemma 5.3]{Jbl2}).  A nilpotent solvsoliton is called a {\it nilsoliton}.

\section{Ricci pinching functional}\label{RP-sec}

Given a nonflat homogeneous Riemannian metric $g$ on a differentiable manifold of dimension $n$, the quantity
$$
F(g):=\frac{\scalar_g^2}{|\Ricci_g|^2},
$$
where $\scalar_g$ and $\Ricci_g$ are respectively the scalar and Ricci curvature of $g$, measures in some sense how far is $g$ from being Einstein (recall that a Ricci flat homogeneous Riemannian manifold is necessarily flat; see \cite{AlkKml}).  Indeed, by Cauchy-Schwartz inequality, $F(g)\leq n$ and equality holds if and only if $g$ is Einstein.  Note that $F$ is invariant up to isometry and scaling.
It is also easy to see that if $n-1 < F(g)$, then either $\Ricci_g>0$ or $\Ricci_g<0$.  Conversely, if $\Ricci_g$ is definite, then $F(g)>1$.

Recall that the metric $g$ is called $\alpha$-{\it Ricci pinched} for $0<\alpha\leq 1$ if either $\alpha C \leq \Ricci_g \leq C$ or $-C \leq \Ricci_g \leq -\alpha C$ for some $C>0$.  Equivalently, in terms of its Ricci eigenvalues $r_1\leq\dots\leq r_n$ (counting multiplicities), $g$ is $\alpha$-Ricci pinched if and only if either $0<\Ricci_g$ and $\alpha\leq r_1/r_n$  or $\Ricci_g<0$ and $\alpha\leq r_n/r_1$.  We refer to \cite[12.3.5]{Brg} for further information on general Ricci pinching results.

It follows that $F(g)\geq \alpha^2n$ for any $\alpha$-Ricci pinched metric $g$.  In particular, any upper bound for $F$ restricted to some class $\cca$ of homogeneous metrics, say $F|_\cca\leq M_\cca$, implies that every $g\in\cca$ can be at most $\left(M_\cca/n\right)^{\unm}$-Ricci pinched.  On the other hand, a positive lower bound $m_\cca\leq F|_\cca$ gives rise to the estimate $|\Ricci_g|\leq (m_\cca)^{-\unm}\, |\scalar_g|$ for any $g\in\cca$, which implies that all the curvature on $\cca$ is actually controlled by $|\scalar|$.  Indeed, it has recently been proved in \cite[Theorem 4]{BhmLfnSmn} that there is a positive constant $C(n)$ depending only on the dimension $n$ such that $|\Riem(g)|\leq C(n)|\Ricci(g)|$, for any non-flat homogeneous metric $g$.

In this section, we aim to study the functional
$$
F:\sca\smallsetminus\sca_{flat} \longrightarrow \RR, \qquad F(\mu):=\frac{\scalar_\mu^2}{|\Ricci_\mu|^2},
$$
where $\scalar_\mu$ and $\Ricci_\mu$ are respectively the scalar curvature and Ricci operator of $\mu$ (recall that $\mu\leftrightarrow(S_\mu,\ip)$, see Section \ref{preli}).  Note that $F$ is invariant up to isometry and scaling; in particular, $F$ is $\RR^*\Or(\sg)$-invariant.

Since $\scalar_\mu< 0$ for any nonflat $\mu\in\sca$ (see \cite{Jns}),
$$
0<F(\mu)\leq n, \qquad\forall\mu\in\sca\smallsetminus\sca_{flat},
$$
and $F(\mu)=n$ if and only if $\mu$ is Einstein.  Moreover, if $n-1<F(\mu)$ then $\Ricci_\mu <0$.  Given any class $\cca\subset\sca$ of solvmanifolds, it is therefore natural to consider maximal points of $F|_\cca$ as distinguished metrics among $\cca$, as they are the closest ones to be Einstein, or the most Ricci pinched, among $\cca$.

For each $\mu\in\sca$ we define the $\Gl(\sg)$-invariant numbers,
$$
m_\mu:=\inf F(\Gl(\sg)\cdot\mu), \qquad M_\mu:=\sup F(\Gl(\sg)\cdot\mu),
$$
that is, the infimum and supremum of $F$ among all left-invariant metrics on the Lie group $S_\mu$.  These are also the infimum and supremum of $F\left(\overline{\Gl(\sg)\cdot\mu}\right)$ and $F(\Gl^+(\sg)\cdot\mu)$, and since $\Gl^+(\sg)$ is connected,
$$
(m_\mu,M_\mu)\subset F(\Gl(\sg)\cdot\mu)\subset F\left(\overline{\Gl(\sg)\cdot\mu}\right)\subset[m_\mu,M_\mu].
$$
It follows that a metric on $S_\mu$ can be at most $\left(M_\mu/n\right)^{\unm}$-Ricci pinched.  On the other hand, if $M_\mu>n-1$, then $S_\mu$ admits a Ricci negative metric, and conversely, if $\Ricci_\mu<0$ then $1<F(\mu)$.

We recall that $F$ is not defined on $\sca_{flat}$, so when we write $F(\cca)$ for some subset $\cca\subset\sca$ we always mean $F(\cca\smallsetminus\sca_{flat})$.

\begin{example}\label{level1} {\it Case} $m_\mu=M_\mu$.
It is proved in \cite{inter} that the only Lie brackets for which $\overline{\Gl(\sg)\cdot\mu}=\Gl(\sg)\cdot\mu\cup\{ 0\}$ are the orbits of
$$
\mu_{heis}(e_1,e_2)=e_3, \qquad \mu_{hyp}(e_n,e_i)=e_i, \quad i=1,\dots,n-1,
$$
and zero otherwise.  These two are the only non-abelian Lie groups admitting a unique metric up to isometry and scaling (see \cite{inter}).  Note that $S_{\mu_{hyp}}$ is isometric to the real hyperbolic space $\RR H^n$ and so $m_{\mu_{hyp}}=M_{\mu_{hyp}}=n$.  It is easy to compute that $m_{\mu_{heis}}=M_{\mu_{heis}}=\frac{1}{3}$.  Furthermore, $\mu_{heis}\in \overline{\Gl(\sg)\cdot\mu}$ for any $\mu\notin\Gl(\sg)\cdot\mu_{hyp}$, therefore
\begin{quote}
$m_\mu\leq\frac{1}{3}$ for any $\mu\in\sca$ such that $\mu\notin\Gl(\sg)\cdot\mu_{hyp}$.
\end{quote}
It is easy to see by using Lemma \ref{yuri} below that $S_{\mu_{heis}}$ and $S_{\mu_{hyp}}$ are the only non-abelian solvable Lie groups with $m_\mu=M_\mu$.
\end{example}

\begin{example}\label{dim3} {\it Thurston model geometries}.
It follows from \cite{Mln} that the behavior of $F$ on the set of all left-invariant metrics on each unimodular Lie group of dimension $3$ is given as follows:
$$
\begin{array}{c}
\SU(2):  \, 0\leq F\leq 3, \qquad
\Sl_2(\RR):  \, 0<F<2, \qquad
E(2):  \, 0<F<\frac{1}{3}, \\ \\
Sol:  \, \frac{1}{3}<F\leq 1, \qquad
Nil:  \, F\equiv \frac{1}{3}.
\end{array}
$$
\end{example}

\subsection{Nilpotent case}\label{nilp}
We refer to \cite{nilricciflow} and the references therein for detailed proofs of the results described in this example.  For any $\mu\in\nca$, $m_\mu=\frac{1}{3}$, $F\left(\overline{\Gl(\sg)\cdot\mu}\right)=[\frac{1}{3},M_\mu]$ and $\frac{1}{3}<M_\mu<n$ for any $\mu\notin\Gl(\sg)\cdot\mu_{heis}$.  Moreover, $F(\lambda)=M_\mu$ for some $\lambda\in\Gl(\sg)\cdot\mu$ if and only if $\lambda$ is a nilsoliton.  In other words, the maximum of $F$ among all left-invariant metrics on a nilpotent Lie group is attained at a nilsoliton, which is known to be unique up to isometry and scaling.  It follows that for any nonzero $\mu\in\nca$,
$$
F(\Gl(\sg)\cdot\mu)=\left\{
\begin{array}{ll}
\left(\frac{1}{3},M_\mu\right], & S_\mu\;\mbox{admits a nilsoliton and}\, \mu\notin\Gl(\sg)\cdot\mu_{heis}, \\ \\
\left(\frac{1}{3},M_\mu\right), & S_\mu\;\mbox{does not admit any nilsoliton}, \\ \\
\left\{\frac{1}{3}\right\}, & \mu\in\Gl(\sg)\cdot\mu_{heis}.
\end{array} \right.
$$
If $\mu$ has invariant $\beta$, then $M_\mu=1/|\beta|^2$ (see Section \ref{unim}).  We also have that $F(\nca)=[\frac{1}{3},M_\nca]$ for some constant $M_\nca<n-1$ depending only on $n$.  The nilsolitons with $\Ricci=cI+\Diag(1,2,\dots,n)$ have $F=\frac{n(n-1)}{2(2n+1)}$, showing for instance that $\frac{1}{5}n\leq M_\nca$ for all $7\leq n$.  In the nilpotent case, the functional $F$ is strictly increasing along any Ricci flow solution $g(t)$, unless $g(0)$ is a nilsoliton.

\subsection{Almost-abelian case}\label{alm-abel}
To study the class of {\it almost-abelian} Lie algebras (i.e.\ solvable Lie algebras with a codimension-one abelian ideal), one fixes an orthogonal decomposition $\sg=\ngo\oplus\RR e_n$ and attaches to each matrix $A\in\glg_{n-1}(\RR)$ (identified with $\glg(\ngo)$ via any fixed orthonormal basis) the Lie bracket $\mu_A$ defined by $\mu_A(\ngo,\ngo)=0$ and $\ad_{\mu_A}{e_n}|_{\ngo}=A$.  The construction covers, up to isometry, all left-invariant metrics on almost abelian Lie groups. Note that the class of almost-abelian Lie brackets is contained in $\sca_{\im\RR}\cup\sca_\RR$.  If $A$ is normal, then $A$ is isometric to its symmetric part $S(A):=\unm(A+A^t)\in\sca_\RR$ (see e.g.\ \cite[Remark 5.7]{LF}). 

It is easy to see that $\mu_B\in\Gl(\sg)\cdot\mu_A$ if and only if $B\in\RR^*\Gl_{n-1}(\RR)\cdot A$, i.e.\ $B$ is conjugate to $A$ up to scaling.  It follows that if $A=S+N$, where $S$ is semisimple, $N$ nilpotent and $[S,N]=0$, then $\mu_S,\mu_N\in\overline{\Gl(\sg)\cdot\mu_A}$.

Using the formula for the Ricci curvature of $A$ (see e.g.\ \cite[(25)]{solvsolitons} or \cite{Arr}),
\begin{equation}\label{ricA}
\Ricci_A = \left[\begin{array}{c|c}
\unm[A,A^t]-(\tr{A})S(A) & 0 \\ \hline
0 & -\tr{S(A)^2}
\end{array}\right],
\end{equation}
one obtains that
\begin{equation}\label{Faa}
F(A)=\frac{\left(\tr{S(A)^2}+(\tr{A})^2\right)^2}{\tr{S(A)^2}\left(\tr{S(A)^2}+(\tr{A})^2\right) + \unc|[A,A^t]|^2}.
\end{equation}
It follows that
$$
F(A) \leq 1+\frac{(\tr{A})^2}{\tr{S(A)^2}} \leq n,
$$
where equality holds in the first inequality if and only if $A$ is normal, and $F(A)=n$ if and only if $S(A)=aI$, $a\ne 0$, if and only if $A$ is isometric to the real hyperbolic space $\RR H^n$.  One can use that the following family of conjugate matrices satisfies
$$
F\left(\left[\begin{matrix} 1&t\\ 0&1\end{matrix}\right]\right) \underset{t\to 0}\longrightarrow 3,
$$
to easily obtain that there are many groups $S_A$ in any dimension $n$ with $M_A=n$ but without admitting an Einstein metric.

\begin{theorem}\cite[Proposition 3.3]{Arr}\label{romina}
$A$ is a solvsoliton if and only if either $A$ is normal or $A$ is nilpotent and $[A,[A,A^t]]=cA$ for some $c\in\RR$.
\end{theorem}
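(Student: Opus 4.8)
The plan is to characterize when $\mu_A$ satisfies the solvsoliton condition $\Ricci_A = cI + D$ with $D \in \Der(\mu_A)$, by exploiting the explicit block form \eqref{ricA} and the simple structure of almost-abelian derivations. First I would describe $\Der(\mu_A)$ concretely: since $\mu_A(\ngo,\ngo)=0$ and $\ad_{\mu_A} e_n|_\ngo = A$, a linear map $D$ preserving the ideal $\ngo$ is a derivation precisely when, writing $D e_n = c_0 e_n + v$ with $v \in \ngo$ and $D|_\ngo = D_0 \in \glg(\ngo)$, one has $[D_0, A] = c_0 A$ (the coefficient of $e_n$ must match, i.e.\ $[D_0,A] = c_0 A + \mu_A(v, \cdot)|_\ngo$, but $\mu_A(v,\cdot)=0$, so this reduces to $[D_0,A]=c_0A$, with $v$ arbitrary). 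So a diagonal-block candidate derivation is of the form $D = \left[\begin{smallmatrix} D_0 & 0 \\ 0 & c_0 \end{smallmatrix}\right]$ with $[D_0,A]=c_0 A$, plus an off-diagonal piece that is harmless.

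Next I would write the solvsoliton equation in blocks. By \eqref{ricA}, $\Ricci_A = \left[\begin{smallmatrix} \unm[A,A^t] - (\tr A) S(A) & 0 \\ 0 & -\tr{S(A)^2}\end{smallmatrix}\right]$, so demanding $\Ricci_A = cI + D$ forces $D$ to be block-diagonal (the $\ngo$-$e_n$ block of $\Ricci_A$ vanishes), hence $D = \left[\begin{smallmatrix} D_0 & 0 \\ 0 & c_0\end{smallmatrix}\right]$ with $c + c_0 = -\tr{S(A)^2}$ and
\[
\unm[A,A^t] - (\tr A)\,S(A) = cI + D_0, \qquad [D_0,A] = c_0 A.
\]
Substituting $D_0 = \unm[A,A^t] - (\tr A)S(A) - cI$ into $[D_0,A]=c_0A$, the scalar term drops out and one gets
\[
\unm\big[[A,A^t],A\big] - (\tr A)\,[S(A),A] = c_0 A .
\]
Since $[S(A),A] = \unm[A,A^t]$, this becomes $\unm[[A,A^t],A] - \unm(\tr A)[A,A^t] = c_0 A$, i.e.\ $\tfrac12\big[[A,A^t] - (\tr A)I,\ A\big] \cdot$... more cleanly: $[[A,A^t],A] - (\tr A)[A,A^t] = 2c_0 A$. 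Now I split into the two cases. If $\tr A \ne 0$: I claim the equation forces $[A,A^t]$ to be, up to scalar, related to $A$ in a way that makes $A$ normal; the cleanest route is to pair the derivation equation with $A^t$ and take traces, obtaining $|[A,A^t]|^2 = 2c_0\tr(A^t A) + (\tr A)\tr([A,A^t]A^t)$, and separately show using the solvsoliton structure (e.g.\ that $\Ricci_A$ must commute appropriately, or Jacobi-type identities for $[A,A^t]$) that $c_0$ and the mixed term conspire to give $[A,A^t]=0$. Alternatively, and this is what I expect to be the slickest, invoke that a solvsoliton lies in $\sca_\RR$ together with the Ricci-soliton uniqueness: if $\tr A \ne 0$ and $A$ is not normal, compare with the known classification of solvsolitons on almost-abelian groups via the semisimple/nilpotent splitting $A=S+N$ mentioned before the statement. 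If $\tr A = 0$: the equation reduces to $[[A,A^t],A] = 2c_0 A$, and for this to have a solution one shows $A$ must be nilpotent (if $A$ had a nonzero real or imaginary eigenvalue part one derives a contradiction with $\tr A = 0$ via trace of the derivation equation against $A^t$, giving $|[A,A^t]|^2 = 2c_0|A|^2$, forcing sign constraints incompatible with $\Ricci_A$ being of soliton type unless $A$ is nilpotent), in which case $[A,[A,A^t]] = -2c_0 A = cA$ is exactly the stated condition.

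The main obstacle will be the $\tr A \neq 0$, $A$ not normal case: one must rule out "exotic" solutions of $[[A,A^t],A] - (\tr A)[A,A^t] = 2c_0 A$ with $[A,A^t]\ne 0$. The robust fix is to not argue by hand but to use that any solvsoliton is type-R (from the preliminaries) and that on almost-abelian groups type-R with $\tr A \ne 0$ means $\Spec(A)\not\subset\im\RR$; combined with the fact (cited before this theorem) that a solvsoliton exists iff $A$ is semisimple or nilpotent, and that a nilpotent $A$ has $\tr A = 0$, we conclude $A$ is semisimple. A semisimple $A$ in type-R that is a solvsoliton must in fact be normal: diagonalize $A$ over $\CC$, observe $[A,A^t]=0$ iff the eigenvectors are orthogonal, and the derivation equation $[D_0,A]=c_0A$ with $D_0$ symmetric (since $\Ricci_A$ and $S(A)$ are symmetric, $D_0 = \unm[A,A^t] - (\tr A)S(A) - cI$ is symmetric iff $[A,A^t]$ is — which it is, being skew only if... here one uses $[A,A^t]^t = -[A,A^t]$ wait, $[A,A^t]^t = [A, A^t]$ is symmetric) forces compatibility that yields $[A,A^t]=0$. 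Assembling: $\tr A\ne 0 \Rightarrow A$ normal; $\tr A = 0 \Rightarrow A$ nilpotent with $[A,[A,A^t]]=cA$; the converses are the direct computations checking that normal $A$ and nilpotent $A$ with that bracket condition each produce the required $D_0$, which are routine given \eqref{ricA}.
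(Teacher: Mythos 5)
Your reduction to the derivation equation is the right start (and it is essentially the only available route, since the paper only cites this result from \cite{Arr}): because $\Ricci_A$ is block diagonal, $D=\Ricci_A-cI$ is block diagonal, and such a $D=\left[\begin{smallmatrix} D_0&0\\0&c_0\end{smallmatrix}\right]$ is a derivation of $\mu_A$ iff $[D_0,A]=c_0A$. (Minor slip: $[S(A),A]=-\unm[A,A^t]$, not $+\unm[A,A^t]$, so the equation is $\unm[[A,A^t],A]+\unm(\tr{A})[A,A^t]=c_0A$; this does not affect the structure.) But from there the argument has genuine gaps. First, the case $\tr{A}\neq 0$, $A$ not normal is exactly where you never give a proof: your ``slickest'' and ``robust fix'' routes invoke the classification ``a solvsoliton exists on $S_A$ iff $A$ is semisimple or nilpotent'', which is precisely the content of \cite{Arr} being proved here (the paper lists it as a consequence of this theorem), so the argument is circular; and the claim that a semisimple type-R solvsoliton ``forces compatibility that yields $[A,A^t]=0$'' is never actually argued. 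Second, your assertion that $\tr{A}=0$ forces $A$ nilpotent is false: any traceless normal non-nilpotent $A$ (e.g.\ $A=\Diag(1,-1)$) is a solvsoliton with $\tr{A}=0$; the correct dichotomy in this case is governed by whether the constant $c_0$ vanishes. Third, the deduction ``$[A,[A,A^t]]=cA$ with $c\neq0$ implies $A$ nilpotent'' is asserted via vague ``sign constraints'' but not proved.

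All of this can be closed by elementary trace identities applied to the derivation equation, with no appeal to external classifications. Taking the trace gives $c_0\tr{A}=0$. If $c_0=0$, pair the equation with $[A,A^t]$: since $\tr\big([[A,A^t],A][A,A^t]\big)=0$ by cyclicity, one gets $(\tr{A})\,|[A,A^t]|^2=0$, so either $A$ is normal or $\tr{A}=0$. If $\tr{A}=0$, the equation reads $[A,[A,A^t]]=-2c_0A=:cA$; pairing with $A^t$ gives $|[A,A^t]|^2=2c_0|A|^2$, so $c_0\geq0$. If $c_0=0$ then $A$ commutes with $[A,A^t]$ and
$$
|[A,A^t]|^2=\tr\big([A,A^t](AA^t-A^tA)\big)=\tr\big(A[A,A^t]A^t\big)-\tr\big([A,A^t]A^tA\big)=0,
$$
so $A$ is normal; if $c_0>0$, then $c\neq0$ and $c\,\tr{A^k}=\tr\big([A,[A,A^t]]A^{k-1}\big)=0$ for all $k\geq1$, hence $A$ is nilpotent and satisfies the stated bracket condition. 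Your converse direction (normal, or nilpotent with $[A,[A,A^t]]=cA$, produces the required $D_0$) is indeed routine and correct. With the trace arguments above replacing the circular and false steps, the proof is complete.
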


Concerning the behavior of $F$ close to $\sca_{flat}$, from
$$
A_t:=\left[\begin{matrix} t&-1\\ 1&-t\end{matrix}\right], \quad F(A_t)=\frac{t^4}{t^4+2t^2} \underset{t\to 0}\longrightarrow 0; \qquad
B_t:=\left[\begin{matrix} t&-1\\ 1&t\end{matrix}\right], \quad F(B_t)\equiv 3,
$$
we deduce that $F$ diverges at the flat metric $A_0=B_0$.  Since $\Spec(A_t)=\{\pm\im\sqrt{1-t^2}\}$ for any $t<1$, we have that $\mu_{A_t}\in\RR^*\Gl_2(\RR)\cdot A_0$ for any $t<1$ and hence $m_{A_0}=0$.  On the contrary, $\Spec(B_t)=\{\pm\im+t\}$, so the family $\mu_{B_t}$ is pairwise non-isomorphic.  This can be easily generalized to any dimension $n\geq 3$ (see the proof of Theorem \ref{main}); in particular, $m_A=0$ for any non-nilpotent $A$ with $\Spec(A)\subset\im\RR$ (i.e. $\mu_A\in\sca_{\im\RR}\smallsetminus\nca$).

The following family $C_t$, $0<t$, of semisimple matrices in $\sca_\RR$ given by
\begin{equation}\label{Ct}
C_t:=\left[\begin{matrix} t&-1&\\ 1&-t& \\ &&t \end{matrix}\right], \qquad F(C_t)=\frac{4t^4}{3t^4 + 2t^2} \underset{t\to 0} \longrightarrow 0,
\end{equation}
shows that $\inf\{ m_A:\mu_A\in\sca_\RR\}=0$ for any dimension $n\geq 4$.  Note that each group $S_{C_t}$, $t\ne\pm 1$, admits a solvsoliton, as any $C_t$ is conjugate to a normal matrix.  This is impossible to obtain with symmetric matrices since $m_A=\frac{1}{3}$ in that case (see Proposition \ref{ut}).

It is also easy to see that $\sup\{ M_A:A^t=A\}=n$ by just taking a family $A_t\to I$.

In the case when $\tr{A}=0$, i.e.\ $\mu_A$ unimodular, one obtains
$$
F(A)=\frac{\left(\tr{S(A)^2}\right)^2}{\left(\tr{S(A)^2}\right)^2 + \unc|[A,A^t]|^2},
$$
hence $F(A)\leq 1$ and equality holds if and only if $[A,A^t]=0$.  Thus $M_A=1$ for any $\mu_A\in\sca_\RR\smallsetminus\nca$ and it is a maximum if and only if $A$ is conjugate to a normal matrix, if and only if $A$ is semisimple.  We note that the maxima of $F$ on the set of all left-invariant metrics on $S_A$ ($\tr{A}=0$) are precisely solvsolitons, as in the nilpotent case.

\section{A geometric characterization of $\sca_\RR$}\label{gen}

The following result was obtained in \cite{NklNkn} in the proof of part (2) of Theorem 2.

\begin{lemma}\label{yuri}\cite{NklNkn}
Given $\mu\in\sca$, let $\sg=\ag\oplus\ngo$ be any decomposition such that $\ngo$ is the nilradical of $\mu$.  Then there exists $\lambda\in\overline{\Gl(\sg)\cdot\mu}$ such that $\lambda(\ag,\ag)=0$, $\lambda(\ngo,\ngo)=0$, $\lambda(\ag,\ngo)\subset\ngo$ and $\ad_\lambda{X}$ is a semisimple operator with the same spectrum as $\ad_\mu{X}$ for any $X\in\ag$.
\end{lemma}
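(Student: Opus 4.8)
The plan is to obtain $\lambda$ as an iterated degeneration of $\mu$, that is, to locate it inside $\overline{\Gl(\sg)\cdot\mu}$ by letting suitable diagonal one-parameter subgroups $g_s=s^{D}$ of $\Gl(\sg)$ act and taking $s\to 0$. First I would record the structural facts to be used repeatedly: since $\mu$ is solvable, $[\sg,\sg]$ is nilpotent, so $\mu(\ag,\ag)\subset\ngo$; since $\ngo$ is an ideal, $\mu(\ag,\ngo)\subset\ngo$; and for $X\in\ag$ the operator $\ad_\mu X$ preserves $\ngo$, acts trivially on $\sg/\ngo$, and restricts to a derivation of $\ngo$, hence preserves the lower central series $\ngo=C^1\supset C^2\supset\cdots\supset C^{k+1}=0$. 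In particular $\Spec(\ad_\mu X)$ is just $\Spec(\ad_\mu X|_\ngo)$ together with $\dim\ag$ extra zeros, a description that will persist throughout the argument.

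\textbf{Step 1: collapsing the bracket of $\ngo$.} I would choose a complement $\ngo=W_1\oplus\cdots\oplus W_k$ adapted to the lower central series, so that $C^i=W_i\oplus\cdots\oplus W_k$, and let $D$ act by $0$ on $\ag$ and by $p_i\,\mathrm{Id}$ on $W_i$ with $p_i:=2i-1$. Using $[C^i,C^j]\subset C^{i+j}$, the $\ad_\mu X$-invariance of the $C^i$, and the inequalities $p_\ell\geq p_{i+j}>p_i+p_j$ for $\ell\geq i+j$, one checks that $\mu_1:=\lim_{s\to 0}g_s\cdot\mu$ exists and satisfies $\mu_1(\ag,\ag)=0$, $\mu_1(\ngo,\ngo)=0$ and $\mu_1(\ag,\ngo)\subset\ngo$, while $\ad_{\mu_1}X|_\ngo$ is the block-diagonal part of $\ad_\mu X|_\ngo$ relative to $W_1\oplus\cdots\oplus W_k$. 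Since the latter operator is block-triangular in an adapted basis, it has the same characteristic polynomial as its block-diagonal part, so $\Spec(\ad_{\mu_1}X)=\Spec(\ad_\mu X)$.

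\textbf{Step 2: semisimplifying the action of $\ag$.} For $\mu_1$ both $\ngo$ and $\ag$ are abelian, so the Jacobi identity on $(\ag,\ag,\ngo)$ forces the family $\{\rho(X):=\ad_{\mu_1}X|_\ngo:X\in\ag\}$ to be commutative. I would decompose $\ngo$ into the common generalized eigenspaces of this family and, inside each, fix a filtration for which every nilpotent part $\rho(X)-\rho(X)_{ss}$ is strictly block-upper-triangular and each semisimple part $\rho(X)_{ss}$ is block-diagonal; letting $D'$ be $0$ on $\ag$ and strictly increasing along this filtration on $\ngo$, the same computation as before gives $\lambda:=\lim_{s\to 0}s^{D'}\cdot\mu_1$ with $\lambda(\ag,\ag)=0$, $\lambda(\ngo,\ngo)=0$, $\lambda(\ag,\ngo)\subset\ngo$ and $\ad_\lambda X|_\ngo=\rho(X)_{ss}$, which is semisimple and has $\Spec(\rho(X)_{ss})=\Spec(\rho(X))$, so $\Spec(\ad_\lambda X)=\Spec(\ad_\mu X)$ for all $X\in\ag$. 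Finally, from $\mu_1\in\overline{\Gl(\sg)\cdot\mu}$ one gets $\Gl(\sg)\cdot\mu_1\subset\overline{\Gl(\sg)\cdot\mu}$, whence $\lambda\in\overline{\Gl(\sg)\cdot\mu_1}\subset\overline{\Gl(\sg)\cdot\mu}$, and $\lambda$ has all the required properties.

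I expect the main obstacle to be Step 2: one must handle the real Jordan decomposition of a \emph{commuting} family of operators with enough care that the common generalized eigenspaces, and a filtration simultaneously trivializing all the nilpotent parts, can be chosen so that $D'$ is a genuine real diagonalizable grading for which the degeneration limit exists and equals exactly the semisimple part. One might hope to merge the two steps into a single one-parameter subgroup, but the operators $\ad_\mu X$ need not commute before the first collapse, which is why splitting the argument in two stages seems cleaner.
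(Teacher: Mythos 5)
The paper itself gives no proof of this lemma: it is imported from \cite{NklNkn} (it is extracted there from the proof of part (2) of Theorem 2), so there is nothing to compare line by line, and your two-step degeneration argument stands as a self-contained reconstruction. As such it is essentially correct. Step 1 works exactly as you say: $\ad_\mu X|_\ngo$ is a derivation of $\ngo$, hence preserves the lower central series, so with the weights $p_i=2i-1$ the limit exists, kills $\mu(\ag,\ag)$ and $\mu(\ngo,\ngo)$, and replaces $\ad_\mu X|_\ngo$ by its block-diagonal part, which has the same characteristic polynomial. The two points you flag as the main obstacle in Step 2 are standard and do go through: since the $\rho(X)$ commute, each $\rho(X)_{ss}$ is a polynomial in $\rho(X)$ and therefore all semisimple and nilpotent parts commute pairwise; the common kernel of the commuting nilpotent parts is nonzero and invariant under every $\rho(X)_{ss}$, which gives the flag by induction; and because a commuting family of real semisimple operators acts completely reducibly, the flag can be refined to a splitting $\ngo=U_1\oplus\dots\oplus U_r$ preserved by every $\rho(X)_{ss}$ and strictly shifted by every $\rho(X)_n$. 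The one detail to pin down is the direction of the weights: with the action $(s^{D'}\cdot\mu_1)(x,y)=s^{D'}\mu_1(s^{-D'}x,s^{-D'}y)$ and $s\to 0$, the component of $\rho(X)u$, $u\in U_i$, lying in $U_j$ is scaled by $s^{q_j-q_i}$, so the limit exists precisely when the nilpotent parts push vectors toward \emph{strictly larger} $D'$-weights (as the lower central series does in Step 1); with the opposite reading of ``strictly upper triangular together with increasing weights'' the off-diagonal terms blow up, and one must either reverse the ordering of the flag or send $s\to\infty$. With that convention fixed, the limit is $\rho(X)_{ss}$ on $\ngo$ and $0$ on $\ag$, the spectrum (including the $\dim\ag$ zeros coming from the trivial action on $\sg/\ngo$) is unchanged, and $\lambda\in\overline{\Gl(\sg)\cdot\mu_1}\subset\overline{\Gl(\sg)\cdot\mu}$ by $\Gl(\sg)$-invariance of the orbit closure, exactly as you conclude.
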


\begin{corollary}\label{yuri-cor}
For any $\mu\in\sca\smallsetminus\sca_\RR$, there is a nonzero $\lambda\in\overline{\Gl(\sg)\cdot\mu}\cap\sca_{flat}$.
\end{corollary}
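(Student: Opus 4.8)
The plan is to combine Lemma~\ref{yuri}, which produces, inside $\overline{\Gl(\sg)\cdot\mu}$, a bracket whose relevant adjoint maps are semisimple, with one additional $\Gl(\sg)$-degeneration that discards the directions responsible for real eigenvalues, and then to recognize the outcome as an element of $\Gl(\sg)\cdot\sca_{flat}$ via the description of $\Gl(\sg)\cdot\sca_{flat}$ recalled in Section~\ref{preli}.

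First I would unwind the hypothesis $\mu\notin\sca_\RR$: there is $X_0\in\sg$ with $\ad_\mu{X_0}$ not nilpotent and $\Spec(\ad_\mu{X_0})\subset\im\RR$, so $\ad_\mu{X_0}$ has a nonzero, purely imaginary eigenvalue. Since $\mu(\sg,\sg)$ lies in the nilradical $\ngo$ of $\mu$ and every element of $\ngo$ is $\ad_\mu$-nilpotent, we get $X_0\notin\ngo$; hence $\RR X_0\cap\ngo=0$ and we may choose a complement $\sg=\ag\oplus\ngo$ with $X_0\in\ag$. Applying Lemma~\ref{yuri} to this decomposition produces $\lambda_1\in\overline{\Gl(\sg)\cdot\mu}$ with $\lambda_1(\ag,\ag)=\lambda_1(\ngo,\ngo)=0$, $\lambda_1(\ag,\ngo)\subset\ngo$, and $\ad_{\lambda_1}{Y}$ semisimple with the same spectrum as $\ad_\mu{Y}$ for every $Y\in\ag$; in particular $\ad_{\lambda_1}{X_0}$ is a nonzero semisimple operator with purely imaginary spectrum, and $\lambda_1(\ag,\ag)=0$ forces $\ad_{\lambda_1}{X_0}|_\ag=0$, so the nonzero eigenvalue occurs on the $\ngo$-block, i.e. $\ad_{\lambda_1}{X_0}|_\ngo\ne 0$.

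Next I would fix a complement $\ag=\RR X_0\oplus\ag'$ and degenerate $\lambda_1$ by means of $g_s\in\Gl(\sg)$ equal to the identity on $\RR X_0\oplus\ngo$ and to $s\cdot\mathrm{id}$ on $\ag'$. Because $\lambda_1$ is abelian on $\ag$ and sends $\ag\times\ngo$ into $\ngo$, a direct computation shows that $g_s\cdot\lambda_1$ converges as $s\to\infty$ to the bracket $\lambda_2$ given by $\lambda_2(X_0,v)=\lambda_1(X_0,v)$ for $v\in\ngo$ and $\lambda_2=0$ on every other pair of blocks among $\RR X_0,\ag',\ngo$. Then $\lambda_2\in\overline{\Gl(\sg)\cdot\mu}$ (orbit closures are closed and $\Gl(\sg)$-invariant), $\lambda_2\ne 0$ because $\ad_{\lambda_2}{X_0}=\ad_{\lambda_1}{X_0}|_\ngo\ne 0$, and $\lambda_2$ retains the decomposition $\sg=\ag\oplus\ngo$ with $\lambda_2(\ag,\ag)=\lambda_2(\ngo,\ngo)=0$, $\lambda_2(\ag,\ngo)\subset\ngo$ and $\ad_{\lambda_2}{Y}$ semisimple for every $Y\in\ag$. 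Using that $\lambda_2(\sg,\sg)\subset\ngo$ and $\ad_{\lambda_2}{N}|_\ngo=0$ for $N\in\ngo$, one also checks that $\Spec(\ad_{\lambda_2}{X})\subset\im\RR$ for every $X\in\sg$, i.e. $\lambda_2\in\sca_{\im\RR}$. By the description of $\Gl(\sg)\cdot\sca_{flat}$ in Section~\ref{preli}, a bracket in $\sca_{\im\RR}$ admitting such a decomposition lies in $\Gl(\sg)\cdot\sca_{flat}$, so some $h\cdot\lambda_2\in\sca_{flat}$ provides the required nonzero element of $\overline{\Gl(\sg)\cdot\mu}\cap\sca_{flat}$.

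I expect the delicate point to be the second degeneration: Lemma~\ref{yuri} alone does not suffice, since $\mu\notin\sca_\RR$ only furnishes one bad direction $X_0$ while $\ag$ may contain directions along which $\ad_\mu$ has real eigenvalues, so $\lambda_1$ need not lie in $\sca_{\im\RR}$. The map $g_s$ is tailored to annihilate those directions while preserving the semisimple, purely imaginary action along $X_0$, and the part that needs genuine care is confirming that the limit $\lambda_2$ really has the claimed block form and that its \emph{entire} adjoint representation—not just its restriction to $\ag$—has imaginary spectrum.
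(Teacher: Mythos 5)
Your proof is correct and follows essentially the same route as the paper: apply Lemma~\ref{yuri}, then degenerate by scaling the complementary directions of $\ag$ (the paper's $h_k|_\ag=\Diag(1,k,\dots,k)$ is exactly your $g_s$) so that only the brackets involving the direction with nonzero imaginary spectrum survive, and recognize the nonzero limit as flat. Your final step, passing through $\Gl(\sg)\cdot\sca_{flat}$ and conjugating into $\sca_{flat}$ using the $\Gl(\sg)$-invariance of the orbit closure, is a slightly more careful rendering of what the paper asserts directly, and is perfectly fine.
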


\begin{proof}
Given $\mu\in\sca\smallsetminus\sca_\RR$, we consider the Lie bracket $\lambda\in\overline{\Gl(\sg)\cdot\mu}$ given by Lemma \ref{yuri}.  Thus $\lambda\in\sca\smallsetminus\sca_\RR$ and we can take a basis $\{ Y_1,\dots,Y_r\}$ of $\ag$ such that $\Spec(\ad_\lambda{Y_1}|_{\ngo})\subset\im\RR$ and is nonzero.  Now if we take $h_k\in\Gl(\sg)$, $k\in\NN$, defined by $h_k|_{\ag}=\Diag(1,k,\dots,k)$ in terms of the basis $\{ Y_i\}$ and $h_k|_\ngo=I$, then $h_k\cdot\lambda$ converges to a Lie bracket $\overline{\lambda}$, as $k$ goes to $\infty$, whose only surviving brackets are those including the vector $Y_1$.   This implies that $0\ne\overline{\lambda}\in\overline{\Gl(\sg)\cdot\mu}\cap\sca_{flat}$, concluding the proof.
\end{proof}

The following lemma will also be very useful.

\begin{lemma}\label{ramiro}\cite[Lemma 3.4]{BhmLfn}
If $\mu\in\sca_\RR$, then $\overline{\Gl(\sg)\cdot\mu}\cap\sca_{flat}=\{ 0\}$.
\end{lemma}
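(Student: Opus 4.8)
The plan is to reduce the statement to one not involving flatness, namely that $\sca_\RR$ is stable under taking $\Gl(\sg)$-orbit closures, and then to prove that by following the ``roots'' of a solvable Lie bracket through a degeneration. For the reduction: granting $\overline{\Gl(\sg)\cdot\mu}\subseteq\sca_\RR$ for every $\mu\in\sca_\RR$, and using $\sca_{flat}\subseteq\Gl(\sg)\cdot\sca_{flat}$, one gets $\overline{\Gl(\sg)\cdot\mu}\cap\sca_{flat}\subseteq\sca_\RR\cap(\Gl(\sg)\cdot\sca_{flat})=\{0\}$, the last equality being one of the facts recorded in Section \ref{preli}; the reverse inclusion $\{0\}\subseteq\overline{\Gl(\sg)\cdot\mu}\cap\sca_{flat}$ is clear since $(tI)\cdot\mu=t^{-1}\mu\to 0$. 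So everything comes down to showing that real type is inherited by all degenerations of a left-invariant metric.

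The root picture I would set up first. Let $\nu\in\sca$ have nilradical $\ngo$ and fix a vector-space complement $\ag$, so $r:=\dim\ag$. Since $[\sg,\sg]$ is nilpotent it lies in $\ngo$, so every $\ad_\nu Y$ is block upper triangular for $\sg=\ag\oplus\ngo$; writing $Y=Y_\ag+Y_\ngo$, the operators $C:=\ad_\nu Y_\ag|_\ngo$ and $D:=\ad_\nu Y_\ngo|_\ngo$ have all their iterated Lie brackets of the form $\ad_\nu(\cdot)|_\ngo$ with argument in $\ngo$, hence nilpotent, so $\langle C,D\rangle$ is solvable with nilpotent derived algebra, thus simultaneously triangularizable over $\CC$, and (as $D$ is nilpotent) $\Spec(\ad_\nu Y|_\ngo)=\Spec(C)$. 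Hence there are complex linear functionals $\alpha_1^\nu,\dots,\alpha_N^\nu$ on $\ag$ (the roots of $\nu$, $N=\dim\ngo$) with $\Spec(\ad_\nu Y)=\{0\}^r\cup\{\alpha_i^\nu(Y_\ag)\}_i$ for all $Y$. Moreover no nonzero $H\in\ag$ is $\ad$-nilpotent: otherwise $\RR H+\ngo$ would be an ideal (because $[\sg,H]\subseteq\ngo$) which is nilpotent by Engel's theorem ($\ad_\nu H|_\ngo$ being a nilpotent derivation of the nilpotent algebra $\ngo$) and strictly larger than the nilradical. Combining, $\bigcap_i\ker\alpha_i^\nu=0$ in $\ag$, and one obtains the characterization: $\nu\in\sca_\RR$ if and only if $\{\Rea\alpha_i^\nu\}_i$ spans $\ag^*$ (if it spans, $\Spec(\ad_\nu Y)\subseteq\im\RR$ forces $Y_\ag=0$, hence $\ad_\nu Y$ nilpotent; if not, there is $H\ne0$ with all $\Rea\alpha_i^\nu(H)=0$, and then $\alpha_{i_0}^\nu(H)\ne0$ for some $i_0$ since $\bigcap_i\ker\alpha_i^\nu=0$, so $\ad_\nu H$ is non-nilpotent with purely imaginary spectrum).

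Then, given $\mu\in\sca_\RR$ with nilradical $\ngo$, complement $\ag$, and roots $\alpha_i:=\alpha_i^\mu$: since $\{\Rea\alpha_i\}$ spans $\ag^*$, the linear map $P\mapsto(\Rea\alpha_i(P))_i$ is injective, so there is $c>0$ with $\sum_i|\Rea\alpha_i(P)|\ge c|P|$ for all $P\in\ag$. Let $\lambda=\lim_k h_k\cdot\mu$ and suppose $\lambda\notin\sca_\RR$; by the characterization applied to $\lambda$, there is $X$ in a nilradical complement of $\lambda$ with $\Spec(\ad_\lambda X|_{\ngo_\lambda})\subseteq\im\RR$ containing a nonzero element. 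Since conjugation preserves characteristic polynomials, $\det(tI-\ad_\lambda X)=\lim_k\det(tI-\ad_\mu(h_k^{-1}X))$; writing $P_k:=(h_k^{-1}X)_\ag$ and using the root formula, $\det(tI-\ad_\mu(h_k^{-1}X))=t^r\prod_i(t-\alpha_i(P_k))$, whereas $\det(tI-\ad_\lambda X)=t^{r_\lambda}\prod_j(t-\beta_j)$ with $\{\beta_j\}=\Spec(\ad_\lambda X|_{\ngo_\lambda})\subseteq\im\RR$ and some $\beta_j\ne0$. The monic polynomials $\prod_i(t-\alpha_i(P_k))$ have bounded coefficients, so along a subsequence the multiset $\{\alpha_i(P_k)\}_i$ converges, and comparing the two factorizations each of its limit points is $0$ or some $\beta_j$; in particular $\Rea\alpha_i(P_k)\to0$ for every $i$, while $\alpha_{i_0}(P_k)\to\beta_{j_0}\ne0$ for some $i_0$. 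But then $c|P_k|\le\sum_i|\Rea\alpha_i(P_k)|\to0$, so $P_k\to0$ and hence $\alpha_{i_0}(P_k)\to0$, a contradiction. Thus $\lambda\in\sca_\RR$, which finishes the proof.

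The main obstacle is the passage to the limit in the third paragraph: in a degeneration $h_k\cdot\mu\to\lambda$ both the nilradical and its complement may rotate and jump in dimension, so a priori the eigenvalues of the maps $\ad_\mu(h_k^{-1}X)$ need not vary controllably as $h_k^{-1}X$ escapes to infinity. What rescues the argument is the observation that $\Spec(\ad_\mu Y|_\ngo)$ depends only on $Y_\ag$ and is linear in it (it equals $\{\alpha_i(Y_\ag)\}$), so that the spanning of $\{\Rea\alpha_i\}$ turns into a genuine coercivity estimate on $\ag$; establishing this root calculus and the characterization of $\sca_\RR$ is where the real work lies, the topological reduction and the limiting argument being routine once it is in place.
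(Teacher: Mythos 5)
Your argument is correct, and there is in fact no internal proof to compare it with: the paper quotes this statement verbatim from \cite{BhmLfn} (Lemma 3.4), so your write-up supplies a proof the paper does not contain. What you establish is actually stronger than the lemma, namely that every $\lambda\in\overline{\Gl(\sg)\cdot\mu}$ is again of real type when $\mu\in\sca_\RR$; the lemma then follows since a nonzero $\lambda\in\sca_{flat}$ has, by Milnor's structure result recalled in Section \ref{preli}, a nonzero skew-symmetric $\ad_\lambda X$, which is non-nilpotent with spectrum in $\im\RR$, so $\sca_{flat}\cap\sca_\RR=\{0\}$ (your appeal to the recorded fact $\Gl(\sg)\cdot\sca_{flat}\cap\sca_\RR=\{0\}$ amounts to the same thing). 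The mechanism is sound: $\Spec(\ad_\mu Y)$ depends only on the component $Y_\ag$ modulo the nilradical, linearly through the roots $\alpha_i$, and real type makes $P\mapsto\sum_i|\Rea\alpha_i(P)|$ a norm on $\ag$, so convergence of characteristic polynomials to one with purely imaginary roots forces $P_k\to 0$ and hence all roots to zero; note that this fixed coercivity constant is exactly why the statement is orbitwise and does not contradict the family $B_t$ of Section \ref{alm-abel}, which shows $\sca_\RR$ itself is not closed (there the constant degenerates with $t$). Two small points to tighten: the Engel step showing $\RR H+\ngo$ is nilpotent requires ad-nilpotency of all elements $cH+n$, not only of $H$; this follows at once from your own spectrum identity $\Spec(\ad(cH+n)|_\ngo)=c\,\Spec(\ad H|_\ngo)=\{0\}$, so it is worth saying. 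Also, to produce the witness $X$ for $\lambda\notin\sca_\RR$ you only need the definition of $\sca_\RR$ (a purely imaginary spectrum which is not $\{0\}$), not the root characterization applied to $\lambda$; using the characterization tacitly requires $\lambda$ to be solvable, which is true because $\sca$ is closed, but the detour is unnecessary.
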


The following characterization follows from Corollary \ref{yuri-cor} and Lemma \ref{ramiro}.

\begin{corollary}\label{char1}
A given $\mu\in\sca$ belongs to $\sca_\RR$ if and only if $\overline{\Gl(\sg)\cdot\mu}\cap\sca_{flat}=\{ 0\}$.
\end{corollary}

We are now ready to prove some general results on Ricci pinching of solvmanifolds.

\begin{theorem}\label{main}
\quad
\begin{itemize}
  \item[(i)] For any $\mu\in\sca_\RR$, $0<m_\mu$ and $F\left(\overline{\Gl(\sg)\cdot\mu}\right)=[m_\mu,M_\mu]$.
  \item[ ]
  \item[(ii)] For $n\geq 4$, $\inf\{ m_\mu:\mu\in\sca_\RR\}=0$; in particular, $F(\sca_\RR)=(0,n]$.
  \item[ ]
  \item[(iii)] $m_\mu=0$ for any $\mu\in\sca\smallsetminus\sca_\RR$.
\end{itemize}
\end{theorem}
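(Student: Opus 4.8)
The plan is to handle the three items separately; the real content lies in item (i) and in one explicit curvature estimate needed for (iii).

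\emph{Item (i).} Assuming $\mu\neq 0$, I would first show $\overline{\Gl(\sg)\cdot\mu}\subseteq\sca_\RR$: if $\lambda\in\overline{\Gl(\sg)\cdot\mu}$ then $\Gl(\sg)\cdot\lambda\subseteq\overline{\Gl(\sg)\cdot\mu}$ by continuity of the $\Gl(\sg)$-action, so $\overline{\Gl(\sg)\cdot\lambda}\cap\sca_{flat}\subseteq\overline{\Gl(\sg)\cdot\mu}\cap\sca_{flat}=\{0\}$ by Corollary \ref{char1}, and Corollary \ref{char1} again forces $\lambda\in\sca_\RR$. Since $\sca_\RR\cap\sca_{flat}=\{0\}$ and $F=\scalar^2/|\Ricci|^2$ is a quotient of polynomials in the Lie bracket whose denominator does not vanish on $\sca\smallsetminus\sca_{flat}$, $F$ is continuous on $\overline{\Gl(\sg)\cdot\mu}\smallsetminus\{0\}$. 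Then the slice $K:=\overline{\Gl(\sg)\cdot\mu}\cap\{\mu'\colon|\mu'|=1\}$ is a nonempty compact set missing $\sca_{flat}$, so $F|_K$ attains a positive minimum and a maximum; as $F$ is scale invariant and $\overline{\Gl(\sg)\cdot\mu}$ is a cone, these are precisely $m_\mu$ and $M_\mu$, and in particular they lie in $F(\overline{\Gl(\sg)\cdot\mu})$. Combined with the chain $(m_\mu,M_\mu)\subseteq F(\Gl(\sg)\cdot\mu)\subseteq F(\overline{\Gl(\sg)\cdot\mu})\subseteq[m_\mu,M_\mu]$ from Section \ref{RP-sec}, this yields $0<m_\mu$ and $F(\overline{\Gl(\sg)\cdot\mu})=[m_\mu,M_\mu]$.

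\emph{Item (iii).} Given $\mu\in\sca\smallsetminus\sca_\RR$, Corollary \ref{yuri-cor} provides a nonzero $\lambda\in\overline{\Gl(\sg)\cdot\mu}\cap\sca_{flat}$, and Milnor's structure of $\sca_{flat}$ gives $\sg=\ag\oplus\ngo$ with $\ag,\ngo$ abelian, $\lambda(\ag,\ngo)\subseteq\ngo$ and each $\ad_\lambda X|_\ngo$ ($X\in\ag$) skew-symmetric; I would pick $X_0\in\ag$ with $Z:=\ad_\lambda X_0|_\ngo\neq 0$. Degenerating $\lambda$ along the sequence $h_k\in\Gl(\sg)$ that is $\mathrm{id}$ on $\RR X_0\oplus\ngo$ and $k\cdot\mathrm{id}$ on a complement of $\RR X_0$ in $\ag$ produces a limit $\lambda'\in\overline{\Gl(\sg)\cdot\lambda}\subseteq\overline{\Gl(\sg)\cdot\mu}$ whose only nonzero brackets are $\lambda'(X_0,v)=Zv$, $v\in\ngo$, so $\lambda'\cong\mu_Z\oplus\RR^{\dim\ag-1}$ with $\mu_Z$ the almost-abelian bracket of $Z$ and a central flat factor $\RR^{\dim\ag-1}$. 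Since a flat direct factor affects neither $\scalar$ nor $\Ricci$, $m_\mu\leq m_{\lambda'}\leq m_{\mu_Z}$, so it remains to prove $m_{\mu_Z}=0$. As $Z$ is skew-symmetric and nonzero it acts by a rotation of nonzero angle on some $Z$-invariant plane $W$; conjugating $Z$ by $\operatorname{diag}(e^t,e^{-t})$ on $W$ and by $\mathrm{id}$ on $W^\perp$ yields matrices $Z_t$ (conjugate to $Z$, so $\mu_{Z_t}\in\Gl(\sg)\cdot\mu_Z$) with $\tr Z_t=0$ and, by a computation of the same flavor as those for $A_t$ and $C_t$ in Section \ref{alm-abel}, with $\tr S(Z_t)^2$ and $|[Z_t,Z_t^t]|^2$ both of order $t^2$; then \eqref{Faa} gives $F(Z_t)=O(t^2)\to 0$, so $m_{\mu_Z}=0$ and $m_\mu=0$.

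\emph{Item (ii).} For $n\geq 4$ and $t>0$, pad the matrix $C_t$ of \eqref{Ct} with a zero block of size $n-4$; then $\mu_{C_t\oplus 0}\in\sca_\RR$ (its real eigenvalue $t\neq 0$ prevents any nonzero multiple of $C_t\oplus 0$ from being completely imaginary) and $F(C_t\oplus 0)=F(C_t)\to 0$ by \eqref{Ct} (a flat factor does not change $F$). Since $m_\nu\geq 0$ always while $m_{\mu_{C_t\oplus 0}}\leq F(C_t)\to 0$, we get $\inf\{m_\nu:\nu\in\sca_\RR\}=0$. For $F(\sca_\RR)=(0,n]$, only $\supseteq$ needs an argument: joining $I_{n-1}$ (where $F=n$, realized by $\RR H^n$) to $C_{t_0}\oplus 0$ by the segment $A_r:=(1-r)I_{n-1}+r(C_{t_0}\oplus 0)$, $r\in[0,1]$, each $A_r$ has a positive real eigenvalue and is not skew-symmetric, hence $\mu_{A_r}\in\sca_\RR\smallsetminus\sca_{flat}$ and $r\mapsto F(A_r)$ is continuous; by the intermediate value theorem $F$ takes on $\sca_\RR$ every value in $[F(C_{t_0}),n]$, and letting $t_0\to 0$ exhausts $(0,n]$.

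\emph{Main obstacle.} The only genuinely computational point is the estimate $F(Z_t)\to 0$ in (iii) — equivalently, $m_A=0$ for every nonzero, non-nilpotent $A$ with $\Spec(A)\subset\im\RR$. The subtlety is that, along a degeneration to a flat metric, the scalar curvature — nonpositive on $\sca$ and zero at the flat bracket — has vanishing first $t$-derivative and so is of order $t^2$, while $|\Ricci|$ is of order $t$, so that $F$ really tends to $0$ rather than to some positive number; one has to check that the $t$-linear part of $\Ricci_{Z_t}$ is nonzero, which is exactly what the choice of the rotation plane $W$ ensures.
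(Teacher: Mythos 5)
Your proposal is correct and follows essentially the same route as the paper: part (i) via Lemma \ref{ramiro}/Corollary \ref{char1} and compactness of the unit-sphere slice of the orbit closure combined with the interval chain from Section \ref{RP-sec}, part (ii) via the family $C_t$ of \eqref{Ct}, and part (iii) by degenerating through Corollary \ref{yuri-cor} to a bracket supported on a single element acting skew-symmetrically and then perturbing it inside its conjugacy class so that, exactly as in the paper's computation, the numerator of \eqref{Faa} is $O(t^4)$ while the denominator is of exact order $t^2$. The only cosmetic differences (deforming one rotation plane by $\operatorname{diag}(e^t,e^{-t})$ instead of all $2\times 2$ blocks, and spelling out the intermediate value argument behind ``in particular, $F(\sca_\RR)=(0,n]$'') do not change the substance.
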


\begin{proof}
Given $\mu\in\sca_{\RR}$, from Lemma \ref{ramiro} we obtain that  $\overline{\Gl^+(\sg)\cdot\mu}\cap\{\lambda:|\lambda|=1\}$ is a connected and compact subset of $\sca\smallsetminus\sca_{flat}$, so part (i) follows.  Part (ii) was proved in Example \ref{alm-abel} by using the family $C_t$ in \eqref{Ct}.

We now prove (iii).  Given $\mu\in\sca\smallsetminus\sca_\RR$, we consider the Lie bracket $\overline{\lambda}\in\overline{\Gl(\sg)\cdot\mu}$ as in the proof of Corollary \ref{yuri-cor}.  Thus there exists a basis of $\ngo$ such that $A_0:=\ad_\lambda{Y_1}|_{\ngo}$ is a block diagonal matrix with $2\times 2$ blocks of the form
$$
B(a_j)=\left[\begin{matrix} 0&-a_j\\ a_j&0\end{matrix}\right], \qquad a_j\in\RR.
$$
If $n-r$ is odd, then there is also a $1\times 1$ zero block.  For each $\epsilon>0$ we construct matrices $A_\epsilon$ with blocks
$$
B(a_j,\epsilon):=\left[\begin{matrix} \epsilon a_j&-a_j\\ a_j&-\epsilon a_j\end{matrix}\right],
$$
and consider the corresponding Lie bracket $\lambda_\epsilon$, i.e.\ $\ad_{\lambda_\epsilon}{Y_1}|_{\ngo}=A_\epsilon$ and all the other brackets zero.  Since $\Spec(B(a,\epsilon))=\{\pm\sqrt{\epsilon^2-1}a\}$ for any $a\in\RR$, we obtain that $A_\epsilon$ is conjugate to $\sqrt{1-\epsilon^2}A_0$ and so $\lambda_\epsilon\in\Gl(\sg)\cdot\overline{\lambda}$ for each $0<\epsilon<1$.  A straightforward computation gives that
$$
F(\lambda_\epsilon)=\frac{C_1\epsilon^4}{C_1\epsilon^4 + C_2\epsilon^2} \underset{\epsilon\to 0} \longrightarrow 0, \quad \mbox{where}\quad C_1:= \left(\sum a_j^2\right)^2, \quad C_2:=\sum a_j^4.
$$
Since $\lambda_\epsilon\in\overline{\Gl(\sg)\cdot\mu}$ for any $\epsilon>0$, this implies that $m_\mu=0$, concluding the proof of the theorem.
\end{proof}

The following geometric characterization follows from Theorem \ref{main}.

\begin{corollary}\label{char2}
A given $\mu\in\sca$ belongs to $\sca_\RR$ if and only if $m_\mu>0$.
\end{corollary}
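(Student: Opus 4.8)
The corollary is an immediate consequence of Theorem \ref{main}, so the plan is simply to assemble parts (i) and (iii) of that result. No new ideas are needed; the point is only to observe that those two parts already cover every $\mu\in\sca$, since $\sca_\RR$ and $\sca\smallsetminus\sca_\RR$ partition $\sca$.

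Concretely, for the forward implication I would take $\mu\in\sca_\RR$ and quote Theorem \ref{main}(i), which asserts precisely that $0<m_\mu$; nothing further is required. For the converse I would argue by contraposition: if $\mu\in\sca\smallsetminus\sca_\RR$, then Theorem \ref{main}(iii) gives $m_\mu=0$, so $m_\mu>0$ forces $\mu\in\sca_\RR$. Combining the two directions yields the stated equivalence.

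There is therefore no genuine obstacle at this stage; all the work has already been done. It is worth recalling where it sits, though. The implication $\mu\in\sca_\RR\Rightarrow m_\mu>0$ ultimately rests on Lemma \ref{ramiro} (equivalently Corollary \ref{char1}): since $\overline{\Gl^+(\sg)\cdot\mu}\cap\{|\lambda|=1\}$ is then a compact connected subset of the open set $\sca\smallsetminus\sca_{flat}$, on which $F$ is continuous and strictly positive, its infimum $m_\mu$ is positive. The implication $\mu\notin\sca_\RR\Rightarrow m_\mu=0$ uses Corollary \ref{yuri-cor} to degenerate $\mu$ within $\overline{\Gl(\sg)\cdot\mu}$ to a bracket carrying a nonzero purely imaginary adjoint block, followed by the explicit family $\lambda_\epsilon$ along which $F\to 0$. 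Thus the corollary is essentially a clean restatement of Theorem \ref{main}, and the only thing to verify is the exhaustiveness of the two cases, which is immediate.
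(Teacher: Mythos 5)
Your proposal is correct and coincides with the paper's own argument: the corollary is stated there as an immediate consequence of Theorem \ref{main}, with the forward direction given by part (i) and the converse by part (iii), exactly as you assemble them. Your recollection of where the underlying work lies (Lemma \ref{ramiro} for positivity via compactness, and Corollary \ref{yuri-cor} plus the family $\lambda_\epsilon$ for the degenerate case) also matches the paper.
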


Concerning Ricci versus sectional curvature, the following strong Gap Theorem has recently been proved.

\begin{theorem}\cite[Theorem 4]{BhmLfnSmn}
There is a positive constant $C(n)$ depending only on the dimension $n$ such that
$$
0<C(n)\leq\frac{|\Ricci(g)|^2}{|\Riem(g)|^2},
$$
for any non-flat homogeneous Riemannian metric $g$.
\end{theorem}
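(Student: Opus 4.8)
Since this is a cited result, we only indicate the line of argument we would follow. The plan is to argue by contradiction after two reductions. First, $|\Ricci(g)|^2/|\Riem(g)|^2$ is invariant under isometry and scaling and both quantities are local, so we may pass to the universal cover and restrict attention to simply connected $n$-dimensional homogeneous metrics. Second, the full isometry group of such a space has dimension at most $\tfrac12 n(n+1)$, so it suffices to bound $F'(\mu):=|\Ricci(\mu)|^2/|\Riem(\mu)|^2$ below over the finite-dimensional set of \emph{infinitesimal models}: triples consisting of a reductive decomposition $\ggo=\hg\oplus\mg$ with $\dim\ggo$ bounded in terms of $n$, the isotropy action of $\hg$ on $\mg$ by skew-symmetric operators, and an inner product on $\mg$, on which $|\Riem|$ and $|\Ricci|$ are given by fixed universal polynomial expressions in the bracket. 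Suppose then that $\inf F'=0$: there is a sequence of such models $\mu_k$, normalized so that $|\Riem(\mu_k)|=1$, with $|\Ricci(\mu_k)|\to 0$.

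The heart of the proof is to extract a convergent subsequence whose limit $\mu_\infty$ is again the infinitesimal model of a homogeneous space of dimension $n$, with $|\Riem(\mu_\infty)|=1$ and $\Ricci(\mu_\infty)=0$ by continuity of the polynomial curvature expressions. Once this is achieved, the Alekseevskii--Kimelfeld theorem (a Ricci-flat homogeneous metric is flat, recalled in Section \ref{RP-sec}) forces $|\Riem(\mu_\infty)|=0$, a contradiction; hence $C(n):=\inf F'>0$.

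The hard part is precisely this compactness step. A sequence of homogeneous metrics with curvature bounded above and below away from zero may well collapse, and its algebraic presentation may degenerate in the limit (the isotropy subalgebra may jump in dimension, or the reductive structure may be lost), so that $|\Riem|$ fails to pass continuously to the limit — the Berger spheres are the cautionary example. To control this one would need, uniformly along the sequence, a well-chosen presentation — a minimal or otherwise canonical transitive group together with a normalized gauge — in which the curvature normalization $|\Riem(\mu_k)|=1$, combined with the Ricci estimate of the previous theorem and the structure theory of the ``space of homogeneous spaces'', confines $\{\mu_k\}$ to a precompact region, and one would then have to verify that the limiting data still defines an honest $n$-dimensional homogeneous space. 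An alternative route would first split, via the structure theorem for transitive isometry groups, into a compact homogeneous factor and a solvmanifold-type fibre, and treat the two cases separately: in the compact case the invariant metrics modulo scaling form an open cone in a finite-dimensional space and a direct compactness argument applies, while in the solvable case the nilradical stratification and bracket-flow techniques already in use in this paper are available.
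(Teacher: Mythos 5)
The paper does not prove this statement at all: it is quoted verbatim from \cite[Theorem 4]{BhmLfnSmn} and used as a black box, so there is no internal proof to compare yours against. What you offer is a program rather than an argument, and you yourself point at the place where it is incomplete; that place is not a technicality but the entire content of the theorem. Concretely, the contradiction scheme does not close as outlined: the normalization $|\Riem(\mu_k)|=1$ does not confine the sequence of infinitesimal models to a precompact set, because the curvature, while polynomial in the bracket for a fixed inner product, does not bound the bracket from below (isotropy components, presentations of flat or collapsing directions, and growing transitive groups are invisible to $|\Riem|$), so one cannot simply "extract a convergent subsequence". Even when a limit bracket exists, it may have strictly larger isotropy or fail to present a complete, effective, globally homogeneous $n$-dimensional space, and the Alekseevskii--Kimel'fel'd theorem you invoke at the limit is a statement about genuine homogeneous manifolds, not about arbitrary limits of algebraic data. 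Controlling precisely this degeneration is what the proof in \cite{BhmLfnSmn} accomplishes, using the structure theory of the space of homogeneous spaces developed in \cite{BhmLfn2,BhmLfn}; it is not something one can wave through with "a well-chosen presentation". Your alternative route has the same hole: in the compact case a "direct compactness argument" is exactly what fails, since sequences of compact homogeneous metrics with $|\Riem|=1$ can collapse or approach Ricci-flat (hence flat-torus-like) geometry, which is again the hard case rather than an easy one.

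So the verdict is: the strategy you describe is the natural first attempt and correctly identifies the obstruction, but the key compactness/non-degeneration step is missing, and without it the argument proves nothing. Since the paper itself only cites the result, the appropriate treatment here is the citation; if you want a proof, you must supply the degeneration control, which is the substance of \cite{BhmLfnSmn}.
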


It follows from part (i) of Theorem \ref{main} that the analogous gap condition for scalar versus Ricci curvature, i.e.\
$$
0<C(S)\leq F(g),
$$
does hold for the set of all left-invariant metrics on any solvable Lie group $S$ of real type.  However, it is not true for the entire class $\sca_\RR$ by part (ii).  Recall from Example \ref{nilp} that it does hold true though for the class $\nca$ of all nilmanifolds with $C(\nca)=1/3$ for any $n$.  On the other hand, part (iii) of Theorem \ref{main} implies that the gap condition never holds for the set of left-invariant metrics on a solvable Lie group of imaginary type.

Corollary \ref{char2} can be rewritten as follows.

\begin{corollary}\label{char3}
For any non-abelian solvable Lie group $S$ of real type there exists a constant $m(S)>0$ such that
$$
|\Ricci(g)| \leq m(S)|\scalar(g)|,
$$
for any left-invariant metric $g$ on $S$.  There is no such a constant for solvable Lie groups which are not of real type.
\end{corollary}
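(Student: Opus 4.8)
The plan is to obtain the statement as a direct translation of Corollary~\ref{char2} (for the existence of $m(S)$) and of part~(iii) of Theorem~\ref{main} (for the non-existence assertion), using the dictionary of Section~\ref{preli} between left-invariant metrics on $S$ and the $\Gl(\sg)$-orbit of a Lie bracket. No new geometric input is required; the point is only to unwind the definition of $m_\mu$ and to check that $F$ is genuinely everywhere defined on the relevant orbit.

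First I would fix a non-abelian solvable Lie group $S$ of real type and write $S=S_\mu$ for a nonzero $\mu\in\sca_\RR$. Since $\Gl(\sg)\cdot\sca_{flat}\cap\sca_\RR=\{0\}$, no left-invariant metric on $S$ is flat, so the whole orbit $\Gl(\sg)\cdot\mu$ lies in $\sca\smallsetminus\sca_{flat}$ and $F(h\cdot\mu)=\scalar_{h\cdot\mu}^{2}/|\Ricci_{h\cdot\mu}|^{2}$ for every $h\in\Gl(\sg)$, with $\scalar<0$ throughout. By Corollary~\ref{char2}, $m_\mu=\inf F(\Gl(\sg)\cdot\mu)>0$, hence $|\Ricci_{h\cdot\mu}|^{2}\leq m_\mu^{-1}\scalar_{h\cdot\mu}^{2}$ for all $h$. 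Since every left-invariant metric $g$ on $S$ is isometric to some $(S_\mu,\la h\cdot,h\cdot\ra)$ and $F$ is an isometry invariant, setting $m(S):=m_\mu^{-1/2}$ yields $|\Ricci(g)|\leq m(S)\,|\scalar(g)|$.

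For the second assertion, take $S=S_\mu$ with $\mu\in\sca\smallsetminus\sca_\RR$. By Theorem~\ref{main}(iii) we have $m_\mu=0$, and since $F$ is only considered on $\sca\smallsetminus\sca_{flat}$, the infimum defining $m_\mu$ is approached by a sequence of nonflat left-invariant metrics $g_k$ on $S$ with $F(g_k)\to 0$. For such metrics $|\Ricci(g_k)|/|\scalar(g_k)|=F(g_k)^{-1/2}\to\infty$, so no finite constant $m(S)$ can satisfy $|\Ricci(g_k)|\leq m(S)|\scalar(g_k)|$ for all $k$.

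I do not expect a genuine obstacle here: the whole content sits in Corollary~\ref{char2} and Theorem~\ref{main}(iii). The only delicate points, both already settled by the material in Sections~\ref{preli} and~\ref{gen}, are that a non-abelian real-type group admits no flat left-invariant metric (so that the pointwise inequality $|\Ricci|\leq m_\mu^{-1/2}|\scalar|$ really holds on the entire orbit, not just generically) and that the minimizing metrics for a non-real-type group can be chosen nonflat.
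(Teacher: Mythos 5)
Your proposal is correct and follows exactly the paper's route: the paper states Corollary~\ref{char3} as a mere rewriting of Corollary~\ref{char2} (equivalently, of Theorem~\ref{main}, parts (i) and (iii)), and your argument just unwinds $m_\mu>0$ into the estimate with $m(S)=m_\mu^{-1/2}$ and $m_\mu=0$ into the failure of any such constant. The two points you flag as delicate (no flat metrics on a nonzero real-type orbit, and nonflat minimizing sequences when $m_\mu=0$) are indeed the only checks needed and are covered by $\Gl(\sg)\cdot\sca_{flat}\cap\sca_\RR=\{0\}$ and by Jensen's $\scalar<0$ for nonflat solvmanifolds, as you use them.
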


\section{Unimodular case}\label{unim}

The discussion in this section provides a proof of Theorem \ref{main1}.  Let $S$ be an $n$-dimensional simply connected solvable Lie group with Lie algebra $\sg$ and let $\ngo$ denote the nilradical of $\sg$.  Given any left-invariant metric $\ip$ on $S$, we consider the orthogonal decomposition $\sg=\ag\oplus\ngo$.  If $\ngo$ is not abelian, then attached to $(S,\ip)$ there is a symmetric operator
$$
\beta:\ngo\longrightarrow\ngo,
$$
such that $\beta_+:=\beta+|\beta|^2I>0$, $\tr{\beta}=-1$ and the following estimate holds:
\begin{equation}\label{Ebeta}
\la E_\beta\cdot\lb,\lb\ra \geq 0, \qquad E_\beta:=\left[\begin{smallmatrix} 0&\\&\beta_+ \end{smallmatrix}\right] \in\glg(\sg).
\end{equation}
Here $E_\beta\cdot\lb := E_\beta\lb-[E_\beta\cdot,\cdot]-[\cdot,E_\beta\cdot]$, $\lb$ is the Lie bracket of $\sg$ and $\ip$ also denotes the natural inner product on $\Lambda^2\sg^*\otimes\sg$ defined by $\ip$.  Furthermore, equality holds in \eqref{Ebeta} if and only if $E_\beta\in\Der(\sg)$, if and only if
\begin{equation}\label{Ebeta-eq}
[\ag,\ag]=0, \qquad [\beta,\ad{\ag}|_\ngo]=0 \quad\mbox{and} \quad \beta_+\in\Der(\ngo).
\end{equation}
In the case that $\ngo$ is abelian, $\beta$ is not defined but the estimate \eqref{Ebeta} still holds for $\beta_+=I$ and the equality condition is equivalent to just $[\ag,\ag]=0$.  This operator $\beta$, called the {\it beta operator} of $(S,\ip)$, was introduced in \cite{standard} and is related to instability properties of the Lie bracket of $\ngo$ under the natural $\Gl(\ngo)$-action.  Estimate \eqref{Ebeta} was obtained in \cite[Lemma 4.9]{solvsolitons} and the equality condition \eqref{Ebeta-eq} follows from the algebraic properties of $\beta$ proved in \cite{standard}.  We refer to \cite{beta} for an updated overview of the beta operator of a general homogeneous space and its geometric and GIT properties.

The eigenvalues $b_1 \leq\dots\leq b_m$ of $\beta$ (counting multiplicities) are in $\QQ$ and depend only on the Lie group $S$, actually only on the nilradical $\ngo$ of $\sg$, where $m:=\dim{\ngo}$.  We say that $\ngo$ is of {\it type} $(b_1,\dots,b_m)$.  Remarkably, for any $n\in\NN$, there are only finitely many possible types among all nilpotent Lie algebras of dimension $\leq n$.  Recall that for any type $(b_1,\dots,b_m)$,
$$
b_1+\dots+b_m=-1, \qquad b_j+\sum b_i^2>0, \quad \forall j, \qquad \sum \frac{1}{b_i^2}<m-1.
$$
All possible types for $m=6$ were computed in \cite{Wll} and for $m=7$ in \cite{Frn} (see Example \ref{dim5} below for $m\leq 5$).

We assume from now on that $S$ is unimodular.  In this case, estimate \eqref{Ebeta} can be written in more geometric terms as,
\begin{equation}\label{est-unim}
\la\Ricci,E_\beta\ra\geq 0,
\end{equation}
where $\Ricci$ is the Ricci operator of $(S,\ip)$.  In what follows, we review the fundamental role that the beta operator and its spectrum play in the Ricci curvature of $(S,\ip)$, providing a proof of Theorem \ref{main1} along the way:

\begin{itemize}
\item \cite[Theorem 4.8]{solvsolitons} $(S,\ip)$ is a solvsoliton if and only if
\begin{equation}\label{soliton}
\Ricci=-\scalar_N \beta_\sg, \qquad \mbox{where}\quad
\beta_\sg:=\left[\begin{smallmatrix} -|\beta|^2I&\\&\beta \end{smallmatrix}\right],
\end{equation}
and $\scalar_N$ denotes the scalar curvature of the metric defined by $\ip$ on the simply connected nilpotent Lie group $N$ with Lie algebra $\ngo$.  This simpler way of writing the structural result \cite[Theorem 4.8]{solvsolitons} was discovered in \cite{BhmLfn2} and it easily follows from the formula for the Ricci operator given in \cite[(5)]{solvsolitons}.

\item In other words, every unimodular $(S,\ip)$ comes with a symmetric operator $\beta_\sg$ of $(\sg,\ip)$, also called the {\it beta operator} of $(S,\ip)$, which is the mandatory Ricci operator (up to scaling) in order for $(S,\ip)$ to be a solvsoliton.

\item In particular, $\left\{-\sum b_i^2, b_1,\dots,b_m\right\}$ is precisely the set of Ricci eigenvalues (up to scaling) of any potential solvsoliton on $S$, being the first one of multiplicity $n-m$ (recall that $\dim{\ngo}=m$).

\item The following estimate was also discovered in \cite{BhmLfn2}:
\begin{equation}\label{est}
|\Ricci| \geq -\scalar\left(n-m+\frac{1}{\sum b_i^2}\right)^{-\unm},
\end{equation}
where equality holds if and only if $(S,\ip)$ is a solvsoliton.  This follows by multiplying the equation $\beta_\sg=-|\beta|^2I+E_\beta$ by $\Ricci$ and using estimate \eqref{est-unim} to obtain
\begin{equation}\label{est-proof}
|\Ricci||\beta_\sg| \geq \la\Ricci,\beta_\sg\ra = -|\beta|^2\scalar + \la\Ricci,E_\beta\ra \geq -|\beta|^2\scalar.
\end{equation}
The equality condition follows from the structure result \eqref{soliton} and the equality condition \eqref{Ebeta-eq} for the estimate \eqref{Ebeta}.

\item Since $\scalar\leq 0$, estimate \eqref{est} can be rewritten as
\begin{equation}\label{bound}
F(\ip)\leq n-m+\frac{1}{\sum b_i^2}< n-1,
\end{equation}
where equality holds precisely at solvsolitons.  Note that this upper bound depends only on the spectrum $\{ b_1,\dots,b_m\}$ of $\beta$ and so it is valid for any left-invariant metric on $S$, so the supremum $M_S$ of $F$ on the set of all left-invariant metrics on $S$ is $\leq n-m+\frac{1}{\sum b_i^2}$.

\item Moreover, estimate \eqref{bound} holds for any left-invariant metric on any solvable Lie group having a nilradical of type $(b_1,\dots,b_m)$.

\item Solvsolitons are therefore global maxima of $F$ restricted to left-invariant metrics on $S$ (provided there is one).

\item It follows from \cite[Theorem 5.1]{BhmLfn} that if $S$ is of real type, then $M_S=n-m+\frac{1}{\sum b_i^2}$, and it is a maximum if and only if $S$ admits a solvsoliton.

\item In all the above items we have assumed that $\ngo$ is not abelian.  Recall that unimodularity has also been assumed in all these items.  In the case $[\ngo,\ngo]=0$, one has that $(S,\ip)$ is a solvsoliton if and only if,
$$
\Ricci=-\tfrac{\scalar}{n-m}\left[\begin{smallmatrix} -I&\\&0 \end{smallmatrix}\right],
$$
so the Ricci eigenvalues (up to scaling) of any potential non-flat solvsoliton on $S$ are $-1$ and $0$,  of multiplicity $n-m$ and $m$, respectively.  Furthermore, estimate \eqref{bound} takes the form
\begin{equation}\label{bound2}
F(\ip)\leq n-m,
\end{equation}
where equality holds if and only if $(S,\ip)$ is a solvsoliton, which implies that solvsolitons are always global maxima.  This therefore generalizes the results given at the end of Section \ref{alm-abel} in the unimodular almost-abelian case.  It also follows from \cite[Theorem 5.1]{BhmLfn} that for $S$ of real type, $M_S=n-m$, which is a maximum if and only if $S$ admits a solvsoliton.
\end{itemize}

\begin{example}\label{dim5}
A nilsoliton $(S,g)$, say with $\Ricci=cI+D$, has beta operator $D-\frac{\tr{D^2}}{\tr{D}}I$ up to scaling.  The derivation $D$ is called the {\it eigenvalue-type} of the nilsoliton or of the corresponding Einstein solvmanifold and has been computed in \cite{finding,Wll,Frn} for dimensions $m\leq 5$, $m=6$ and $m=7$, respectively.  We give in Table \ref{types} all the types for $m=5$ along with the corresponding $q$.  We denote by $\mu_1=(0,0,3 \cdot 12, 4 \cdot 13, 3 \cdot 14)$ the Lie algebra defined by
$$
\mu_1(e_1,e_2)=3e_3, \quad \mu_1(e_1,e_3)=4e_4, \quad \mu_1(e_1,e_4)=3e_5.
$$
\end{example}

\begin{table}
$$
\begin{array}{l|l|c}
\mbox{Lie algebra} & \mbox{Type} & q = 1/\sum b_i^2 \\\hline
\mu_1=(0,0,3 \cdot 12, 4 \cdot 13, 3 \cdot 14) & (-1,-\frac{1}{3},-\frac{1}{10},\frac{1}{10},\frac{1}{3}) & \frac{5}{6} \\\hline
\mu_2=(0,0,3^{\frac{1}{2}} \cdot 12, 3^{\frac{1}{2}} \cdot 13, 2^{\frac{1}{2}} \cdot 14 + 2^{\frac{1}{2}} \cdot 23) & (-\frac{4}{5},-\frac{1}{2},-\frac{1}{5},\frac{1}{10},\frac{2}{5}) & \frac{10}{11} \\\hline
\mu_3=(0,0,0,12, 2^{\frac{1}{2}} \cdot 14 + 2^{\frac{1}{2}} \cdot 23) & (-\frac{4}{5},-\frac{3}{5},-\frac{1}{5},0,\frac{3}{5}) & \frac{5}{7} \\\hline
\mu_4=(0,0,0,0, 12 + 34) & (-\frac{1}{2},-\frac{1}{2},-\frac{1}{2},-\frac{1}{2},1) & \frac{1}{2} \\\hline
\mu_5=(0,0,2 \cdot 12, 3^{\frac{1}{2}} \cdot 13, 3^{\frac{1}{2}} \cdot 23) & (-\frac{7}{10},-\frac{7}{10},-\frac{1}{5},\frac{3}{10},\frac{3}{10}) & \frac{5}{6} \\\hline
\mu_6=(0,0,3^{\frac{1}{2}} \cdot 12, 3^{\frac{1}{2}} \cdot 13, 2^{\frac{1}{2}} \cdot 14 + 2^{\frac{1}{2}} \cdot 23) & (-1,-\frac{1}{2},-\frac{1}{2},\frac{1}{2},\frac{1}{2}) & \frac{1}{2} \\\hline
\mu_7=(0,0,12,0,0) & (-1,-1,0,0,1) & \frac{1}{3} \\\hline
\mu_8=(0,0,12,13,0) & (-1,-\frac{1}{2},0,0,\frac{1}{2}) & \frac{2}{3}.
\end{array}
$$
\caption{Types for $m=5$}\label{types}
\end{table}

\section{Almost-abelian case revisited}\label{alm-abel-2}

It was shown in Section \ref{unim} that any solvsoliton is a global maximum of $F$ restricted to the set $\cca_S$ of all left-invariant metrics on a unimodular solvable Lie group $S$.  It is natural to expect that this also holds in the non-unimodular case.  However, note that even in the case of a  unimodular solvable Lie group $S$, we do not know the answer to the following questions:

\begin{itemize}
  \item Given the existence of a solvsoliton on $S$, are there other local maxima of $F|_{\cca_S}$?

  \item Assuming $S$ does not admit a solvsoliton, is there any global or local maximum of $F|_{\cca_S}$?
\end{itemize}

In this section, we study these questions on almost-abelian Lie groups (see Section \ref{alm-abel}).

\subsection{Critical points}
We start computing the critical points of the function $F:\glg_{n-1}(\RR)\smallsetminus\sog(n-1)\longrightarrow\RR$ given in \eqref{Faa}.  Consider any differentiable curve $\alpha:(-\epsilon,\epsilon)\longrightarrow\glg_{n-1}(\RR)$ such that $\alpha(0)=A$ and $\alpha'(0)=B$.  By using that
$$
\left.\ddt\right|_{t=0} S(\alpha(t))=S(B), \qquad \left.\ddt\right|_{t=0}\tr{S(\alpha(t))^2}=2\la S(A),B\ra,
$$
and
$$
\left.\ddt\right|_{t=0} [\alpha(t),\alpha(t)^t]=2S([B,A^t]), \qquad
\left.\ddt\right|_{t=0} |[\alpha(t),\alpha(t)^t]|^2 = -4\la [A,[A,A^t]],B\ra,
$$
a straightforward computation gives
\begin{equation}\label{gradFaa}
\grad(F)_A = \frac{1}{c_4(A)^2}\Big(c_1(A) I + c_2(A) S(A) + c_3(A) [A,[A,A^t]]\Big),
\end{equation}
where
$$
c_1(A)=\tr{A}\left(\tr{S(A)^2}+(\tr{A})^2\right)\Big(2\tr{S(A)^2}\left(\tr{S(A)^2}+(\tr{A})^2\right)+|[A,A^t]|^2\Big),
$$
$$
c_2(A)=\left(\tr{S(A)^2}+(\tr{A})^2\right)\Big(-2(\tr{A})^2\left(\tr{S(A)^2}+(\tr{A})^2\right) + |[A,A^t]|^2\Big),
$$
$$
c_3(A)=\left(\tr{S(A)^2}+(\tr{A})^2\right)^2>0,
$$
$$
c_4(A)=\tr{S(A)^2}\left(\tr{S(A)^2}+(\tr{A})^2\right)+\unc |[A,A^t]|^2>0.
$$

\begin{proposition}\label{critFaa2}
A matrix $A$ is a critical point of the functional $$F:\glg_{n-1}(\RR)\smallsetminus\sog(n-1)\longrightarrow\RR$$  given in \eqref{Faa} if and only if
\begin{itemize}
\item either $S(A)=cI$ for a nonzero $c\in\RR$, i.e.\ $A$ is Einstein and $F(A)=n$, a global maximum,

\item or $[A,A^t]=0$ and $\tr{A}=0$.  In this case, $A$ is a solvsoliton, $F(A)=1$ and $A$ is a global maximum of $F$ restricted to the set of unimodular matrices.
\end{itemize}
\end{proposition}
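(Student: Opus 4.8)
The plan is to start from the gradient formula \eqref{gradFaa}. Since $c_4(A)>0$ for every $A\notin\sog(n-1)$, the functional $F$ is smooth on $\glg_{n-1}(\RR)\smallsetminus\sog(n-1)$ and $A$ is a critical point if and only if
\[
c_1(A)\,I + c_2(A)\,S(A) + c_3(A)\,[A,[A,A^t]] = 0 .
\]
First I would extract scalar information by taking the trace of this identity. Using $\tr I = n-1$, $\tr S(A)=\tr A$ and $\tr[A,[A,A^t]]=0$, it reduces to $c_1(A)(n-1)+c_2(A)\tr A=0$. Writing $s:=\tr S(A)^2>0$, $t:=\tr A$ and $p:=|[A,A^t]|^2\ge 0$, a direct manipulation of the formulas for $c_1,c_2$ factors this scalar equation as
\[
t\,(s+t^2)\,\bigl(2(s+t^2)\bigl((n-1)s-t^2\bigr)+np\bigr)=0 .
\]
By the Cauchy--Schwarz inequality, $t^2=(\tr S(A))^2\le (n-1)\,\tr S(A)^2=(n-1)s$, so the last factor is non-negative and vanishes precisely when $S(A)$ is a scalar multiple of the identity (equality in Cauchy--Schwarz) and $[A,A^t]=0$; moreover $S(A)=cI$ already forces $[A,A^t]=0$ (since then $A^t=cI-K$ with $K$ skew). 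As $s>0$ rules out $S(A)=0$, I conclude that at a critical point either $S(A)=cI$ with $c\ne 0$ or $\tr A=0$, and the two alternatives are disjoint because $S(A)=cI$ with $c\ne0$ gives $\tr A=(n-1)c\ne0$.

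Next I would handle the two alternatives. If $S(A)=cI$ with $c\ne 0$, the discussion in Section~\ref{alm-abel} gives $F(A)=n$, which is the global maximum of $F$, so $A$ is automatically critical; and by \eqref{ricA} such an $A$ is Einstein. (Alternatively: the commutator term in \eqref{gradFaa} vanishes and $c_2(A)S(A)$ is a multiple of $I$, so $\grad(F)_A$ is a multiple of $I$, and the scale-invariance identity $\la\grad(F)_A,A\ra=0$ then forces $\grad(F)_A=0$.) If instead $\tr A=0$, substituting $t=0$ into the formulas for the $c_i(A)$ gives $c_1(A)=0$, $c_2(A)=sp$, $c_3(A)=s^2$, so $A$ is critical if and only if $p\,S(A)+s\,[A,[A,A^t]]=0$. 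Taking the inner product with $S(A)$, using $|S(A)|^2=s$ and the identity
\[
\la [A,[A,A^t]],S(A)\ra=-\unm\,|[A,A^t]|^2 ,
\]
which follows from the cyclicity of the trace and the symmetry of $[A,A^t]$ (one checks $\tr([A,[A,A^t]]\,A^t)=-|[A,A^t]|^2$ and $\tr([A,[A,A^t]]\,A)=0$), one gets $sp-\unm sp=\unm sp=0$, hence $p=0$, i.e.\ $[A,A^t]=0$. Conversely, if $\tr A=0$ and $[A,A^t]=0$ then all three terms in \eqref{gradFaa} vanish, so $A$ is critical; here $F(A)=1$, which by the bound $F\le 1$ on trace-zero matrices (end of Section~\ref{alm-abel}) is the global maximum of $F$ among unimodular matrices, and $A$ being normal is a solvsoliton by Theorem~\ref{romina}.

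The computations behind the two factorizations — the identity $c_1(n-1)+c_2t=t(s+t^2)(2(s+t^2)((n-1)s-t^2)+np)$ and the contraction $\la[A,[A,A^t]],S(A)\ra=-\unm|[A,A^t]|^2$ — are routine but need some care; everything else is structural. The main obstacle, conceptually, is ruling out hypothetical ``mixed'' critical points where $\tr A\ne 0$ while $S(A)$ is not scalar, and this is exactly what the strict positivity of $2(s+t^2)((n-1)s-t^2)+np$ in that range delivers; I expect this positivity estimate, together with the single inner-product contraction used in the $\tr A=0$ case, to be the technical heart of the argument.
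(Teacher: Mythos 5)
Your proof is correct, but it follows a genuinely different route from the paper's. The paper disposes of the converse direction in one stroke by contracting the gradient equation against the skew-symmetric part of $A$: since $I$ and $S(A)$ are symmetric, $0=c_4(A)^2\la\grad(F)_A,A-A^t\ra=c_3(A)\la[A,[A,A^t]],A\ra=-c_3(A)|[A,A^t]|^2$, so normality of $A$ comes first, and only then does it split into the cases $c_2(A)\ne 0$ (forcing $S(A)=cI$) and $c_2(A)=0$ (forcing $c_1(A)=0$, hence $\tr A=0$). You instead contract first with $I$ (take the trace), obtaining the factorized identity $t(s+t^2)\bigl(2(s+t^2)((n-1)s-t^2)+np\bigr)=0$, and use Cauchy--Schwarz to see the last factor is positive unless $S(A)$ is scalar and $A$ normal; this yields the dichotomy $S(A)=cI$ or $\tr A=0$ before normality is known, and in the traceless case you recover normality by contracting with $S(A)$ via $\la[A,[A,A^t]],S(A)\ra=-\unm|[A,A^t]|^2$. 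I checked your factorization, the identity $\tr([A,[A,A^t]]A^t)=-|[A,A^t]|^2$, $\tr([A,[A,A^t]]A)=0$, and the sufficiency arguments (including the scale-invariance trick in the Einstein case), and they are all sound. What the paper's choice of test direction buys is brevity: one contraction kills two of the three terms and settles $[A,A^t]=0$ at once. What your trace-first argument buys is that it makes explicit, via the positivity of $2(s+t^2)((n-1)s-t^2)+np$, why there are no ``mixed'' critical points with $\tr A\ne 0$ and $S(A)$ non-scalar, at the cost of an extra computation and a second contraction.
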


\begin{proof}
It is easy to check by using formula \eqref{gradFaa} that $\grad(F)_A=0$ for each matrix in the two items.  Conversely, if $\grad(F)_A=0$, then
$$
0=c_4(A)^2\la\grad(F)_A,A-A^t\ra=c_3(A)\la[A,[A,A^t]],A\ra=-c_3(A)|[A,A^t]|^2,
$$
and so $A$ is normal.  Now $c_2(A)\ne 0$ implies that $S(A)=cI$, and if $c_2(A)=0$, then $c_1(A)=0$ and hence $\tr{A}=0$, concluding the proof.
\end{proof}

In order to study critical points of the functional $F$ restricted to a conjugacy class $\cca(A):=\Gl_{n-1}(\RR)\cdot A$, we need to use that
$$
T_A\cca(A) = [A,\glg_{n-1}(\RR)].
$$
Recall that $\cca(A)$ contains all the metrics on the Lie group $S_A$ with Lie algebra $\mu_A$, up to isometry and scaling.

\begin{proposition}\label{critFaa}
The following conditions are equivalent:
\begin{itemize}
\item[(i)] $A$ is a critical point of $F|_{\cca(A)}$.
\item[ ]
\item[(ii)] $c_2(A)[A,A^t]=2c_3(A)[A^t,[A,[A,A^t]]]$.
\end{itemize}
\end{proposition}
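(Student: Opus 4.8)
The plan is to reduce criticality to an orthogonality condition and then extract it directly from the gradient formula \eqref{gradFaa}. Since $\cca(A)=\Gl_{n-1}(\RR)\cdot A$ has tangent space $T_A\cca(A)=[A,\glg_{n-1}(\RR)]$, the matrix $A$ is a critical point of $F|_{\cca(A)}$ if and only if $\grad(F)_A$ is orthogonal to $[A,\glg_{n-1}(\RR)]$ with respect to the inner product $\la X,Y\ra=\tr(XY^t)$ on $\glg_{n-1}(\RR)$.

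The first step is to identify the adjoint of the linear map $\ad_A: X\mapsto[A,X]$ on $\glg_{n-1}(\RR)$. A short manipulation using cyclicity of the trace gives $\la[A,X],Y\ra=\la X,[A^t,Y]\ra$ for all $X,Y$, so the adjoint of $\ad_A$ is $\ad_{A^t}$. Hence $\grad(F)_A\perp[A,\glg_{n-1}(\RR)]=\Ima(\ad_A)$ if and only if $\grad(F)_A\in\Ker(\ad_{A^t})$, i.e.\ $[A^t,\grad(F)_A]=0$.

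The second step is to substitute \eqref{gradFaa} into this last equation. Using $[A^t,I]=0$ and $[A^t,S(A)]=\tfrac12[A^t,A]=-\tfrac12[A,A^t]$, one obtains
$$
c_4(A)^2\,[A^t,\grad(F)_A]=-\tfrac12\,c_2(A)\,[A,A^t]+c_3(A)\,[A^t,[A,[A,A^t]]],
$$
and since $c_4(A)>0$ this vanishes precisely when $c_2(A)[A,A^t]=2c_3(A)[A^t,[A,[A,A^t]]]$, which is exactly condition (ii). The argument is purely computational and I do not expect a real obstacle; the only points requiring care are the sign in the adjoint identity $\la[A,X],Y\ra=\la X,[A^t,Y]\ra$ and in the bracket $[A^t,S(A)]$. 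It is worth noting that neither $c_1(A)$ nor the precise form of $c_4(A)$ enters: the term $c_1(A)I$ drops out because $I$ is central, and $c_4(A)$ only needs to be positive.
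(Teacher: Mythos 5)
Your proof is correct and follows essentially the same route as the paper: both reduce criticality to $\grad(F)_A\perp[A,\glg_{n-1}(\RR)]$ and then transfer the bracket via the adjoint identity $\la [A,B],Y\ra=\la B,[A^t,Y]\ra$, with the $c_1(A)I$ term dropping out and the sign bookkeeping $[A^t,S(A)]=-\tfrac12[A,A^t]$ handled correctly. The paper merely states the orthogonality condition as an inner-product identity for all $B$ and declares it equivalent to (ii), whereas you make the passage to $[A^t,\grad(F)_A]=0$ explicit; this is the same computation.
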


\begin{proof}
$A$ is a critical point of $F|_{\cca(A)}$ if and only if $\grad(F)_A\perp[A,\glg_{n-1}(\RR)]$, that is,
$$
c_2(A)\la S(A),[A,B]\ra + c_3(A)\la[A,[A,A^t]],[A,B]\ra = 0, \qquad\forall B\in\glg_{n-1}(\RR).
$$
This is equivalent to part (ii), as was to be shown.
\end{proof}

It follows that if $A$ is a critical point of $F|_{\cca(A)}$, then
\begin{equation}\label{norm-crit}
c_2(A)|[A,A^t]|^2=2c_3(A)|[A,[A,A^t]]|^2.
\end{equation}
It is easy to check that every solvsoliton $A$ (see Theorem \ref{romina}) is a critical point of $F|_{\cca(A)}$.  The following examples show that they are not the only ones.

\begin{example}\label{RS}
Let $N$ be a nilsoliton and $C$ a nonzero matrix such that $C^t=-C$ and $[N,C]=0$.  Thus $A:=N+C$ is isometric to $N$ (see e.g.\ \cite[Remark 5.7]{LF}) and so $A$ is a Ricci soliton which is not a solvsoliton.  Nevertheless, $A$ is also a critical point of $F|_{\cca(A)}$.  Indeed, since $c_2(A)=c_2(N)$, $c_3(A)=c_3(N)$, $[A,A^t]=[N,N^t]$ and $[A^t,[A,[A,A^t]]]=[N^t,[N,[N,N^t]]]$, condition (ii) in Proposition \ref{critFaa} holds due to the fact that $N$ is a critical point of $F|_{\cca(N)}$.  Note that $F(A)=F(N)$.  It follows from \cite[Theorem 8.2]{Jbl2} that these are the only Ricci solitons among almost-abelian Lie groups which are not solvsolitons.  We note that for any $t>0$, $A_t:=tN+C\in\cca(A)$ and is also a Ricci soliton, though in pairwise different $\Or(n-1)$-orbits.
\end{example}

\begin{example}
Unexpectedly, a computational exploration using Proposition \ref{critFaa} provided several continuous families of peculiar critical points of $F|_{\cca(A)}$, including traceless $3\times 3$ matrices and the following $2\times 2$ matrices:
$$
\left[\begin{matrix} a&b\\ -b&c\end{matrix}\right], \qquad a^2+b^2+4ab<0, \quad b^2=-\frac{(a^2+b^2)^2+a^3b+ab^3}{a^2+b^2+4ab}.
$$
Note that this is a Ricci soliton if and only if either $a=c$ or $c=-a$ and $b=\pm a$.
\end{example}

It is therefore natural to focus on the study of global and local maxima of $F|_{\cca(A)}$ rather than all critical points.

\subsection{Global maxima}
We first note that when restricted to the closure of a conjugacy class, the functional $F$ has a simpler formula.  Indeed, for any fixed $A_0$, one has that $\tr{B}=\tr{A_0}$ and $\tr{B^2}=\tr{A_0^2}$ for any $B\in\overline{\cca(A_0)}$.  Recall that $\tr{S(B)^2}=\unm|B|^2+\unm\tr{B^2}$ for any matrix $B$.  Thus according to \eqref{Faa}, if $c_0:=\unm\tr{A_0^2}+(\tr{A_0})^2$ and $d_0:=\unm\tr{A_0^2}$, then
\begin{equation}\label{Faa-2}
F(B) = \frac{\left(\unm|B|^2+c_0\right)^2}{\left(\unm|B|^2+d_0\right)\left(\unm|B|^2+c_0\right) + \unc|[B,B^t]|^2}, \qquad\forall B\in\overline{\cca(A_0)}.
\end{equation}

The following results from geometric invariant theory will be very useful to study the maxima of $F|_{\cca(A)}$ (see \cite{RchSld,HnzSchStt}).  The moment map for the conjugation $\Gl_{n-1}(\RR)$-action on $\glg_{n-1}(\RR)$ is given by $m(A)=[A,A^t]/|A|^2$.  It follows that
\begin{equation}\label{mmA-1}
|A_0| \leq |B|, \quad\forall B\in\overline{\cca(A_0)} \quad \mbox{if and only if} \quad [A_0,A_0^t]=0,
\end{equation}
where equality holds if and only if $B\in\Or(n-1)\cdot A_0$ (i.e.\ $B$ is also normal).  In that case, $\cca(A_0)$ is closed; moreover, it is the unique closed $\Gl_{n-1}(\RR)$-orbit inside any $\overline{\cca(A)}$ that meets.  We also have that
\begin{equation}\label{mmA-2}
\frac{|[A_0,A_0^t]|}{|A_0|^2} \leq \frac{|[B,B^t]|}{|B|^2}, \quad\forall B\in\overline{\cca(A_0)} \quad\mbox{if and only if}\quad [[A_0,A_0^t],A_0]=cA_0, \; c\in\RR.
\end{equation}
This is equivalent to $A_0$ be a solvsoliton (see Theorem \ref{romina}).  Equality holds in \eqref{mmA-2} if and only if $B\in\Or(n-1)\cdot A_0$ (i.e.\ $B$ is also a solvsoliton).  The equivalences given in \eqref{mmA-1} and \eqref{mmA-2} are still valid for a local minimum $A_0$ (i.e.\ if we replace the condition $B\in\cca(A_0)$ by $B$ being in some neighborhood of $A_0$ in $\cca(A)$).  Moreover, solvsolitons (resp. normal matrices) are actually the only possible critical points of the functional $\frac{|[B,B^t]|}{|B|^2}$ (resp. $|B|^2$) restricted to a conjugacy class.  It is also well-known that
\begin{equation}\label{r2}
\frac{|[B,B^t]|}{|B|^2}\leq \sqrt{2}, \qquad\forall B\ne 0,
\end{equation}
where equality holds if and only if $B$ is nilpotent of rank $1$, i.e.\ $\mu_B\simeq\mu_{heis}$ (see Example \ref{level1}).  This follows from the fact that these matrices are the only ones satisfying $\cca(A)=\RR^*\Or(n-1)\cdot A$, or equivalently, $\overline{\RR^*\cca(A)}=\cca(A)\cup\{ 0\}$.

\begin{theorem}\label{solv-max}
\hspace{1cm}
\begin{itemize}
\item[(i)] Any solvsoliton $A_0$ is a global maximum of $F|_{\cca(A_0)}$.

\item[(ii)] If $A$ is either semisimple or $\tr{A}=\tr{A^2}=0$ (in particular, if $A$ is nilpotent), then $A_0\in\cca(A)$ is a local maximum of $F|_{\cca(A)}$ if and only if $A_0$ is a solvsoliton.

\item[(iii)] Suppose that $A$ is neither semisimple nor nilpotent.  Then $F|_{\cca(A)}$ does not have a global maximum.  If in addition $A$ is of real type (i.e.\ $\Spec(A)$ is not contained in $\im\RR$), then the supremum value $M_A$ of $F|_{\cca(A)}$ equals $F(A_0)$, where $A_0\in\overline{\cca(A)}$ is the unique normal solvsoliton in $\overline{\cca(A)}$ up to $\Or(n-1)$-conjugation.
\end{itemize}
\end{theorem}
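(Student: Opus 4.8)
The plan is to run everything through the geometric invariant theory of the conjugation action encoded in \eqref{mmA-1}, \eqref{mmA-2}, \eqref{r2} together with the simplified formula \eqref{Faa-2}: on a fixed conjugacy-class closure $\overline{\cca(A_0)}$ the numbers $\tr B$, $\tr B^2$ are constant, so $F$ becomes a function $F=\Phi(x,y)$ of only $x:=\unm|B|^2$ and $y:=\unc|[B,B^t]|^2$, with $\Phi_y<0$ everywhere. For (i), by Theorem \ref{romina} a solvsoliton $A_0$ is either normal or nilpotent. If $A_0$ is normal, then $\cca(A_0)$ is closed and $|B|\ge|A_0|$ for all $B\in\cca(A_0)$ by \eqref{mmA-1}; plugging this into the elementary bound $F(B)\le 1+(\tr B)^2/\tr S(B)^2=1+(\tr A_0)^2/(\unm|B|^2+\unm\tr A_0^2)$, which is decreasing in $|B|^2$, and using that equality holds there for the normal matrix $A_0$, gives $F(B)\le F(A_0)$, with equality forcing $B$ normal and $|B|=|A_0|$, i.e.\ $B\in\Or(n-1)\cdot A_0$. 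If $A_0$ is nilpotent, then $\tr A_0=\tr A_0^2=0$, so \eqref{Faa-2} collapses to $F(B)=\big(1+(|[B,B^t]|/|B|^2)^2\big)^{-1}$, a strictly decreasing function of the moment-map norm; since $A_0$ is a solvsoliton, \eqref{mmA-2} yields $|[A_0,A_0^t]|/|A_0|^2\le|[B,B^t]|/|B|^2$, hence $F(B)\le F(A_0)$, again with equality only on $\Or(n-1)\cdot A_0$.

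For (ii), the implication ``solvsoliton $\Rightarrow$ global (hence local) maximum'' is (i). Conversely let $A_0\in\cca(A)$ be a local maximum of $F|_{\cca(A)}$. If $\tr A=\tr A^2=0$, then as above $F=\big(1+(|[B,B^t]|/|B|^2)^2\big)^{-1}$ on $\cca(A)$, so $A_0$ is a local minimum of the moment-map norm on $\cca(A)$, and the local version of \eqref{mmA-2} (equivalently, the fact that solvsolitons are the only critical points of $|[B,B^t]|/|B|^2$ along a conjugacy class) forces $A_0$ to be a solvsoliton. If $A$ is semisimple we may assume $(\tr A,\tr A^2)\ne(0,0)$; if $A_0$ is normal it is a solvsoliton by Theorem \ref{romina}, so suppose $A_0$ is not normal and set $m:=[A_0,A_0^t]\ne 0$ and $\mu:=[A_0,[A_0,A_0^t]]$, which is nonzero since $\langle A_0,[A_0,[A_0,A_0^t]]\rangle=-|[A_0,A_0^t]|^2\ne 0$. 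Along the curve $B(t):=e^{-tm}A_0e^{tm}\in\cca(A)$ one computes $x'(0)=-|m|^2$, $x''(0)=2|\mu|^2$ (the standard convexity identity $\tfrac{d^2}{dt^2}|B(t)|^2=4|[m,B(t)]|^2$), and $y'(0)=-|\mu|^2$. Being a local maximum, $A_0$ is a critical point, so $\Phi_x x'(0)+\Phi_y y'(0)=0$; since $\Phi_y<0$ this forces $\Phi_x(A_0)>0$. One then assembles the second variation $\tfrac{d^2}{dt^2}F(B(t))|_0=\Hess\,\Phi\,(x'(0),y'(0))+\Phi_x x''(0)+\Phi_y y''(0)$, the only missing ingredient being $y''(0)$ obtained by differentiating $|[B(t),B(t)^t]|^2$ twice, and checks that it is strictly positive, contradicting local maximality; hence $A_0$ is normal, i.e.\ a solvsoliton.

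For (iii), write $A=S+N$ with $S\ne 0$ semisimple, $N\ne 0$ nilpotent, $[S,N]=0$; scaling $N$ inside the centralizer of $S$ shows $S+tN\in\cca(A)$ for all $t\ne 0$, and by Section \ref{alm-abel} one has $S,N\in\overline{\cca(A)}$, so the normal form $A_0$ of $S$ lies in $\overline{\cca(A)}$; in fact $\cca(A_0)$ is the unique closed orbit of $\overline{\cca(A)}$ (GIT), hence $A_0$ is the unique normal matrix in $\overline{\cca(A)}$ up to $\Or(n-1)$, and it is a solvsoliton (normal), namely the solvsoliton of the almost-abelian group attached to $A_{S_0}=S$. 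By \eqref{mmA-1}, $|A_0|\le|B|$ for all $B\in\overline{\cca(A)}$; no $B\in\cca(A)$ is normal (a normal matrix is semisimple while $A$ is not), so when $A$ is of real type (so $\Spec(A_0)$ is not imaginary, $A_0\notin\sca_{flat}$ and $F(A_0)$ is defined) the elementary bound gives, for $B\in\cca(A)$,
$$F(B)\le 1+\frac{(\tr A)^2}{\unm|B|^2+\unm\tr A^2}<1+\frac{(\tr A)^2}{\unm|A_0|^2+\unm\tr A^2}=F(A_0),$$
the strictness coming from $B\notin\Or(n-1)\cdot A_0$; combined with continuity of $F$ along a sequence in $\cca(A)$ tending to $A_0$ this shows $M_A=F(A_0)$, not attained, so $F|_{\cca(A)}$ has no global maximum. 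When $\tr A=\tr A^2=0$, no global maximum follows instead from (ii), since a global maximum would be a solvsoliton, hence semisimple or nilpotent, contradicting the hypothesis; and when $\Spec(A)\subset\im\RR$ with $\tr A^2\ne0$ (so $A_0\in\sog(n-1)$ is flat), one argues ``no global maximum'' by a separate degeneration in the spirit of the proof of Theorem \ref{main}(iii).

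The main obstacle is the second-variation computation in the semisimple case of part (ii): after producing $y''(0)$ one must verify that $\tfrac{d^2}{dt^2}F(B(t))|_0>0$ at every non-normal critical point, and this positivity is exactly what the hypothesis on $A$ buys us (semisimplicity puts the minimum of $|B|^2$, where $\Phi$ is best behaved, on normal matrices; the case $\tr A=\tr A^2=0$ reduces $F$ to a monotone function of the moment-map norm). A secondary technical point is the case of (iii) where the distinguished boundary point $A_0$ is a flat metric, where the clean bound argument has to be replaced by an ad hoc degeneration.
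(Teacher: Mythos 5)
Your treatment of part (i), of the case $\tr{A}=\tr{A^2}=0$ in part (ii), and of the real-type half of part (iii) follows the paper's own route (formula \eqref{Faa-2} combined with the GIT facts \eqref{mmA-1}--\eqref{mmA-2}) and is essentially correct, up to the small slip that when $\tr{A}=0$ the middle inequality in your displayed chain in (iii) is an equality and strictness must come from the first inequality instead. The genuine gap is the semisimple case of (ii): you reduce it to the claim that at every non-normal critical point $A_0$ the second variation of $F$ along $\alpha(t)=e^{tB}A_0e^{-tB}$ with $B=[A_0,A_0^t]$ is strictly positive, and you explicitly leave this verification undone. As stated the claim is false: the paper's Lemma \ref{AAt} shows that this second variation is only $\geq 0$, with equality exactly when $\tr{A_0}=0$ and $[A_0,[A_0,[A_0,A_0^t]]]=0$, and non-normal critical points satisfying both conditions do exist (the Ricci solitons $N+C$ of Example \ref{RS}). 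Since the second-variation formula depends only on the matrix $A_0$ itself, semisimplicity of the class cannot enter through pointwise positivity, contrary to your closing remark that "this positivity is exactly what the hypothesis on $A$ buys us"; what is actually needed is the equality-case analysis: use criticality (Proposition \ref{critFaa}) to show that a degenerate non-normal critical point must have the structure $N+C$ with $N$ a nonzero nilsoliton (this is the content of Proposition \ref{only}) and then exclude it because such a matrix is not semisimple. None of this is in your proposal, so the direction "local maximum $\Rightarrow$ solvsoliton" for semisimple $A$ is not proved. (The paper disposes of (ii) directly from the inequality chain and the local versions of \eqref{mmA-1}--\eqref{mmA-2}, postponing the second-variation machinery to the later subsection; your route is closer to that later machinery, but incomplete.)

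The second gap is the "no global maximum" claim of (iii) when $\Spec(A)\subset\im\RR$, $A$ neither semisimple nor nilpotent and $\tr{A^2}\ne 0$, so that the distinguished limit $A_0$ is skew-symmetric and flat. You rightly notice that the clean bound argument degenerates there, but your proposed substitute, "a separate degeneration in the spirit of the proof of Theorem \ref{main}(iii)", cannot work: those degenerations drive $F$ to $0$ and control the infimum $m_A$, which says nothing about whether the supremum of $F|_{\cca(A)}$ is attained. Note that in this case $\cca(A)$ may contain the Ricci solitons $N+C$, which are critical points with $F(N+C)=F(N)$, so ruling out a global maximum requires an actual argument, e.g.\ the structure result for local maxima (Proposition \ref{only}) together with the fact that the critical points $uN+C\in\cca(A)$ have constant $F$ but fail to be local maxima for large $u$; a degeneration toward small values of $F$ does not address this. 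So parts (i), the traceless case of (ii) and the real-type statement of (iii) are in order, but (ii) for semisimple $A$ and the imaginary-type half of (iii) are genuinely incomplete as proposed.
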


\begin{proof}
If $A_0$ is normal then by \eqref{Faa-2} and \eqref{mmA-1},
$$
F(A) \leq 1+\frac{(\tr{A_0})^2}{\unm|A|^2+\unm\tr{A_0^2}} \leq 1+\frac{(\tr{A_0})^2}{\unm|A_0|^2+\unm\tr{A_0^2}} = F(A_0), \qquad\forall A\in\cca(A_0),
$$
from which part (i) for $A_0$ normal and part (ii) for $A$ semisimple follow.  Assume now that $A_0$ is a nilsoliton, it then follows from \eqref{Faa-2} and \eqref{mmA-2} that
$$
F(A) = \frac{1}{1+\frac{|[A,A^t]|^2}{|A|^4}} \leq  \frac{1}{1+\frac{|[A_0,A_0^t]|^2}{|A_0|^4}} = F(A_0), \qquad\forall A\in\cca(A_0).
$$
This also proves part (ii) in the case $\tr{A}=\tr{A^2}=0$.

It only remains to prove part (iii).  If $A=S+N$ with $S$ semisimple, $N$ nilpotent and $[S,N]=0$ then by \eqref{Faa-2} and \eqref{mmA-1}, $F(B)\leq F(A_0)$ for any $B\in\overline{\cca(A)}$, where $A_0\in\cca(S)$ is normal.  Since $A_0\in\overline{\cca(A)}$, we obtain that $M_A=F(A_0)$, and $M_A$ is not a maximum since $A_0\notin\cca(A)$, which concludes the proof.
\end{proof}

\begin{proposition}\label{ut}
If $\unm\tr{A^2+(\tr{A})^2}\geq 0$ (in particular, if $A$ is real semisimple) and $S(A)\notin\RR I$, then $m_A=\frac{1}{3}$ and it is a minimum if and only if $\mu_A\simeq\mu_{heis}$.
\end{proposition}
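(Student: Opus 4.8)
The plan is to deduce everything from the explicit formula \eqref{Faa-2} for $F$ on (the closure of) a conjugacy class, together with the moment-map estimate \eqref{r2}. Recall that $m_A=\inf F(\Gl(\sg)\cdot\mu_A)$ and that $\Gl(\sg)\cdot\mu_A=\{\mu_B:B\in\RR^*\Gl_{n-1}(\RR)\cdot A\}$, so by scale-invariance of $F$ it is enough to control $F(B)$ for $B$ in the conjugacy class $\cca(A)=\Gl_{n-1}(\RR)\cdot A$. Any such $B$ satisfies $\tr B=\tr A$ and $\tr B^2=\tr A^2$, so \eqref{Faa-2} applies with $c_0=\unm\tr A^2+(\tr A)^2$ and $d_0=\unm\tr A^2$; note that $c_0\geq 0$ is precisely the hypothesis, while $c_0-d_0=(\tr A)^2\geq 0$ is automatic.

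For the lower bound $m_A\geq\tfrac13$, I would set $x:=\unm|B|^2\geq 0$ and use the estimate $\unc|[B,B^t]|^2\leq 2x^2$, which is exactly \eqref{r2}; this bounds the denominator in \eqref{Faa-2} by $(x+d_0)(x+c_0)+2x^2$, so that $F(B)\geq\tfrac13$ reduces to the elementary identity
\[
3(x+c_0)^2-\bigl((x+d_0)(x+c_0)+2x^2\bigr)=(5c_0-d_0)\,x+c_0(3c_0-d_0),
\]
whose right-hand side is $\geq 0$ for all $x\geq 0$ because $5c_0-d_0=4c_0+(\tr A)^2\geq 0$ and $3c_0-d_0=2c_0+(\tr A)^2\geq 0$. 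For the reverse inequality, note that $S(A)\notin\RR I$ means $A$ is not a scalar matrix (in particular $A\neq 0$), so $\mu_A$ is a nonzero bracket with $\mu_A\notin\Gl(\sg)\cdot\mu_{hyp}$; by Example \ref{level1} this forces $\mu_{heis}\in\overline{\Gl(\sg)\cdot\mu_A}$, and hence $m_A\leq F(\mu_{heis})=\tfrac13$. Combining the two bounds gives $m_A=\tfrac13$.

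For the last assertion, if $\mu_A\simeq\mu_{heis}$ then $F\equiv\tfrac13$ on all left-invariant metrics of $S_A$ by Example \ref{level1}, so the value $\tfrac13$ is attained. Conversely, suppose $F(\mu_B)=\tfrac13$ for some non-flat $\mu_B\in\Gl(\sg)\cdot\mu_A$; after rescaling we may take $B\in\cca(A)$ (so $B\neq 0$), and then both inequalities used in the lower bound must be equalities. In particular equality holds in \eqref{r2}, which forces $B$ to be nilpotent of rank one, i.e.\ $\mu_B\simeq\mu_{heis}$, and therefore $\mu_A\simeq\mu_B\simeq\mu_{heis}$. This proves that $m_A$ is a minimum exactly when $\mu_A\simeq\mu_{heis}$.

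The argument is elementary once \eqref{Faa-2} and \eqref{r2} are in hand; the only points requiring care are that the constants $c_0,d_0$ are fixed by the conjugacy class of $A$ (not by $B$), so they are constant over $\overline{\cca(A)}$, and that the sharpness of the denominator bound is governed precisely by the moment-map characterization \eqref{r2} of $\mu_{heis}$, which is what ties "$m_A$ is a minimum" to $\mu_A\simeq\mu_{heis}$. The hypothesis $c_0\geq 0$ enters only through the positivity of $5c_0-d_0$ and $c_0(3c_0-d_0)$, and it cannot be dropped.
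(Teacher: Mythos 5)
Your proof is correct and follows essentially the same route as the paper: the restricted formula \eqref{Faa-2} together with the bound $\unc|[B,B^t]|^2\leq\unm|B|^4$ from \eqref{r2} (you just expand $3(x+c_0)^2$ minus the denominator directly, where the paper instead uses $c_0\geq d_0$ and then $c_0\geq 0$ in two steps), and the upper bound $m_A\leq\tfrac13$ from Example \ref{level1}. The only addition is that you spell out the equality case via the rank-one characterization in \eqref{r2}, which the paper leaves implicit, and this is done correctly.
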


\begin{proof}
From \eqref{r2} and the facts that $c_0\geq d_0$ and $\unm|B|^2+d_0> 0$, we obtain that for any $B\in\cca(A_0)$,
$$
F(B) \geq  \frac{\left(\unm|B|^2+c_0\right)^2}{\left(\unm|B|^2+c_0\right)^2 + \unm|B|^4}
= \frac{1}{1 + \frac{\unm|B|^4}{\left(\unm|B|^2+c_0\right)^2}} \geq \frac{1}{3}.
$$
Thus $m_{A_0}\geq \frac{1}{3}$, but since we know that $m_{A_0}\leq \frac{1}{3}$ from Example \ref{level1}, this concludes the proof.
\end{proof}

The following behavior follows from Theorems \ref{main} and \ref{solv-max} and Proposition \ref{ut}:
$$
F(\Gl(\sg)\cdot\mu_A)=\left\{
\begin{array}{ll}
(0,M_A), & A\in\sca_{\im\RR},\; A \; \mbox{non-nilpotent}, \\ \\
(\frac{1}{3},M_A],  & A\;\mbox{nilpotent}, \\ \\
(m_A,M_A] \; \mbox{or}\; [m_A,M_A], & A\in\sca_\RR\;\mbox{semisimple}, \\ \\
(\frac{1}{3},M_A] \; \mbox{or}\; (\frac{1}{3},M_A), & \unm\tr{A^2}+(\tr{A})^2\geq 0, \\ \\
(m_A,M_A)\; \mbox{or}\; [m_A,M_A), & \mbox{otherwise},
\end{array} \right.
$$
where $0<m_A$.

\subsection{Second variation and local maxima}
In order to study the existence problem of local maxima other than solvsolitons, we need to compute the second variation of $F|_{\cca(A)}$.  We consider the curve $\alpha(t)=e^{tB}Ae^{-tB}\in\cca(A)$, which satisfies
$$
\alpha(0)=A, \quad \alpha'(0)=[B,A], \quad \alpha'(t)=[B,\alpha(t)], \quad \alpha''(0)=[B,[B,A]].
$$
It follows from \eqref{gradFaa} that
\begin{align}
\ddt F(\alpha(t)) = \frac{1}{c_4(\alpha(t))^2} &\Big(c_2(\alpha(t))\la S(\alpha(t)),\alpha'(t)\ra \label{Fprima} \\
&+ c_3(\alpha(t))\la [\alpha(t),[\alpha(t),\alpha(t)^t]],\alpha'(t)\ra\Big). \notag
\end{align}
The second derivative at a critical point is given by the following lemma.

\begin{lemma}\label{Fseg}
If $A$ is a critical point of $F|_{\cca(A)}$, then for the curve $\alpha(t)=e^{tB}Ae^{-tB}$,
\begin{align*}
c_4(A)^4\left.\frac{d^2}{dt^2}\right|_0 F(\alpha(t))
=& \unm|[A,A^t]|^2\la [A,A^t],B\ra^2 \\
&+ c_2(A)\left(\unm |[B,A]|^2+\unm\la[B^t,A],[B,A]\ra\right)  \\
&+ c_3(A)\Big(\la [[B,A],[A,A^t]],[B,A]\ra   \\
&- 2\tr{S([A^t,[B,A]])^2} + \la [A,[A,A^t]],[B,[B,A]]\ra\Big).
\end{align*}
\end{lemma}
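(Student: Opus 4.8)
The plan is to differentiate \eqref{Fprima} once more at $t=0$ and collect the surviving terms, using crucially that $A$ is a critical point so that all boundary terms generated by differentiating the prefactor $c_4(\alpha(t))^{-2}$ vanish. Concretely, write $G(t):= c_2(\alpha(t))\la S(\alpha(t)),\alpha'(t)\ra + c_3(\alpha(t))\la[\alpha(t),[\alpha(t),\alpha(t)^t]],\alpha'(t)\ra$ so that $\ddt F(\alpha(t)) = c_4(\alpha(t))^{-2}G(t)$; then
$$
\left.\tfrac{d^2}{dt^2}\right|_0 F(\alpha(t)) = c_4(A)^{-2}G'(0) - 2c_4(A)^{-3}c_4'(0)\,G(0),
$$
and since $A$ is critical, $G(0)=0$ (this is exactly the content of Proposition \ref{critFaa}), so only the $c_4(A)^{-2}G'(0)$ term remains. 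Multiplying through by $c_4(A)^4$ reduces the problem to computing $c_4(A)^2\,G'(0)$.

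Next I would differentiate $G(t)$ term by term. Here one needs the first-order variations already recorded before \eqref{gradFaa}: along $\alpha$ with $\alpha'(t)=[B,\alpha(t)]$, one has $\left.\ddt\right|_0 S(\alpha(t))=S([B,A])$, $\left.\ddt\right|_0\tr{S(\alpha(t))^2}=2\la S(A),[B,A]\ra$, $\left.\ddt\right|_0[\alpha(t),\alpha(t)^t]=2S([[B,A],A^t])$, and $\left.\ddt\right|_0|[\alpha(t),\alpha(t)^t]|^2 = -4\la[A,[A,A^t]],[B,A]\ra$; together with $\alpha''(0)=[B,[B,A]]$ and $\tr{A^t}=\tr{A}$, $\tr{(A^t)^2}=\tr{A^2}$ being constant along $\cca(A)$ (so that $c_2'$, $c_3'$ only involve the $\tr{S^2}$ and $|[\cdot,\cdot^t]|^2$ pieces). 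The derivative of the $c_2$-term splits into (a) $c_2'(0)\la S(A),[B,A]\ra$, (b) $c_2(A)\la S([B,A]),[B,A]\ra$, and (c) $c_2(A)\la S(A),[B,[B,A]]\ra$; similarly the $c_3$-term produces $c_3'(0)\la[A,[A,A^t]],[B,A]\ra$ plus $c_3(A)$ times the derivative of $\la[\alpha,[\alpha,\alpha^t]],\alpha'\ra$. The key algebraic simplifications are: $\la S([B,A]),[B,A]\ra = \unm|[B,A]|^2 + \unm\la[B^t,A],[B,A]\ra$ (expanding $S=\unm(\,\cdot\,+\,\cdot^t)$ and using that the transpose of $[B,A]$ is $[A^t,B^t]=-[B^t,A^t]$, then pairing), and $\la S(A),[B,[B,A]]\ra = \la A,[B,[B,A]]\ra$ by symmetry of $[B,[B,A]]$'s pairing against the skew part — more carefully, $\la S(A),[B,[B,A]]\ra$ can be rewritten via $\ad_B$-adjoint moves as $-\la[B^t,S(A)]+[B,S(A)],[B,A]\ra$-type expressions that reorganize into the $\unm\la[B^t,A],[B,A]\ra$ contribution, accounting for the factor $\unm$ in the stated formula. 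The terms carrying $c_2'(0)$ and $c_3'(0)$ are where the $\unm|[A,A^t]|^2\la[A,A^t],B\ra^2$ summand is born: using $c_3(A)=(\tr{S(A)^2}+(\tr A)^2)^2$ one gets $c_3'(0)=2(\tr{S(A)^2}+(\tr A)^2)\cdot 2\la S(A),[B,A]\ra$, and combining $c_2'(0)\la S(A),[B,A]\ra + c_3'(0)\la[A,[A,A^t]],[B,A]\ra$ with the critical-point identity from Proposition \ref{critFaa} (which lets one replace $c_2(A)S(A)$-type terms by $c_3(A)[A,[A,A^t]]$-type terms inside pairings against $[B,A]$, since $\grad(F)_A\perp[A,\glg_{n-1}(\RR)]$) collapses everything to a perfect square proportional to $\la[A,A^t],B\ra^2$; tracking the constants $c_1,c_2,c_3,c_4$ gives the coefficient $\unm|[A,A^t]|^2$.

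For the $c_3$-term's remaining piece, differentiate $\la[\alpha(t),[\alpha(t),\alpha(t)^t]],\alpha'(t)\ra$ using the product rule across the four slots: the $\alpha''(0)=[B,[B,A]]$ slot yields $\la[A,[A,A^t]],[B,[B,A]]\ra$; the slot differentiating the outer $[\alpha,\cdot]$ yields $\la[[B,A],[A,A^t]],[B,A]\ra$; the two slots inside $[\alpha,\alpha^t]$ yield $\la[A,[[B,A],A^t]] + [A,[A,[A^t,B^t]]],[B,A]\ra$ — wait, more precisely using $\left.\ddt\right|_0[\alpha,\alpha^t]=2S([[B,A],A^t])$ this contributes $\la[A,2S([[B,A],A^t])],[B,A]\ra$, which after moving one $A$ across and recognizing $\la[A,S([A^t,[B,A]])],[B,A]\ra = \la S([A^t,[B,A]]),[A^t,[B,A]]\ra = \tr{S([A^t,[B,A]])^2}$ (plus sign bookkeeping) becomes the $-2\tr{S([A^t,[B,A]])^2}$ term. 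I expect the main obstacle to be exactly this last bookkeeping: correctly handling all the adjoint-invariance moves $\la[X,Y],Z\ra = -\la Y,[X^t? \text{ or } X,Z]\ra$ for the non-symmetric bracket (one must be careful that $\ad_B$ is not skew-adjoint unless $B$ is, so transposes $B^t$ genuinely appear), getting every sign and every factor of $\unm$ right, and verifying that the terms proportional to $\la S(A),[B,A]\ra$ and to $\la[A,[A,A^t]],[B,A]\ra$ recombine — using Proposition \ref{critFaa} — into the single clean square $\unm|[A,A^t]|^2\la[A,A^t],B\ra^2$ rather than leaving a messier residue. Once the algebra is organized so that the critical-point relation is applied at the right moment, the five displayed summands should fall out; the rest is routine (if lengthy) verification.
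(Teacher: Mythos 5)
Your strategy coincides with the paper's: differentiate \eqref{Fprima} along $\alpha(t)=e^{tB}Ae^{-tB}$, use that at a critical point the bracketed expression $G(t)$ vanishes at $t=0$ (so the derivative never hits $c_4(\alpha(t))^{-2}$), compute the first variations of $c_2$, $c_3$, $\la S(\alpha),\alpha'\ra$, $\la[\alpha,[\alpha,\alpha^t]],\alpha'\ra$, and invoke Proposition \ref{critFaa} to turn $\la[A,[A,A^t]],[B,A]\ra$ into a multiple of $\la[A,A^t],B\ra$. But what you have written is a plan, not a proof: the entire content of the lemma is the exact list of coefficients, and you explicitly defer the ``sign and factor of $\unm$'' bookkeeping that produces them. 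Two identities you do commit to are wrong as stated: $\la S([B,A]),[B,A]\ra=\unm|[B,A]|^2+\unm\tr([B,A]^2)$, not $\unm|[B,A]|^2+\unm\la[B^t,A],[B,A]\ra$ (the latter summand arises from the other piece, via $\la S(A),[B,[B,A]]\ra=\unm\la[B^t,A],[B,A]\ra-\unm\tr([B,A]^2)$, after which the $\tr([B,A]^2)$ terms cancel), and $\la S(A),[B,[B,A]]\ra=\la A,[B,[B,A]]\ra$ is false since $[B,[B,A]]$ need not be symmetric. There is also a normalization you drop silently: by your own reduction the claim becomes $c_4(A)^2G'(0)=\mathrm{RHS}$, which is impossible by homogeneity in $A$ (the left side is homogeneous of degree $16$, the displayed right side of degree $8$); what the computation actually yields is $c_4(A)^2\left.\tfrac{d^2}{dt^2}\right|_0F(\alpha(t))=G'(0)$, i.e.\ the identity with exponent $2$ rather than $4$ on $c_4(A)$ --- harmless for the later sign arguments since $c_4>0$, but it must be addressed rather than asserted away.

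Finally, the step you describe as ``everything collapses to the single clean square $\unm|[A,A^t]|^2\la[A,A^t],B\ra^2$'' is exactly where the work lies and cannot be promised in advance. Carrying the assembly out (using $\la S(A),[B,A]\ra=\unm\la[A,A^t],B\ra$), the cross terms $\mp 2\bigl(\tr S(A)^2+(\tr A)^2\bigr)\la[A,[A,A^t]],[B,A]\ra\,\la[A,A^t],B\ra$ coming from $c_2'(0)\la S(A),[B,A]\ra$ and from $c_3'(0)\la[A,[A,A^t]],[B,A]\ra$ cancel identically, and the surviving coefficient of $\la[A,A^t],B\ra^2$ is $\unm|[A,A^t]|^2-2(\tr A)^2\bigl(\tr S(A)^2+(\tr A)^2\bigr)$, which agrees with the displayed first summand precisely when $\tr A=0$ or $A$ is normal. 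So the claimed unconditional collapse is not automatic, and reconciling (or correcting) this coefficient is part of the verification your proposal leaves undone; until that term-by-term assembly is actually performed, the lemma has not been proved.
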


\begin{proof}
In order to take the derivative of \eqref{Fprima}, we need the following computations:
$$
\left.\ddt\right|_0 \tr{S(\alpha(t))^2}+(\tr{\alpha(t)})^2 = \la[A,A^t],B\ra,
$$
\begin{align*}
\left.\ddt\right|_0 c_2(\alpha(t)) =& -4(\tr{A})^2\left(\tr{S(A)^2}+(\tr{A})^2\right)\la[A,A^t],B\ra + \la[A,A^t],B\ra|[A,A^t]|^2 \\
& - 4\left(\tr{S(A)^2}+(\tr{A})^2\right)\la[A,[A,A^t]],[B,A]\ra, \\
=& \left(-8(\tr{A})^2(\tr{S(A)^2}+(\tr{A})^2)+3|[A,A^t]|^2\right)\la [A,A^t],B\ra,
\end{align*}
$$
\left.\ddt\right|_0 c_3(\alpha(t)) = 2\left(\tr{S(A)^2}+(\tr{A})^2\right)\la[A,A^t],B\ra,
$$
\begin{align*}
\left.\ddt\right|_0 \la S(\alpha(t)),\alpha'(t)\ra =& \la S([B,A]),[B,A]\ra + \la S(A),\alpha''(0)\ra \\
=& \tr{S([B,A])^2} + \la S(A), [B,[B,A]]\ra \\
=& \tr{S([B,A])^2} + \unm\la[B^t,A],[B,A]\ra - \unm\tr{[B,A]^2} \\
=& \unm |[B,A]|^2 + \unm\la[B^t,A],[B,A]\ra,
\end{align*}
$$
\left.\ddt\right|_0 \la [\alpha(t),[\alpha(t),\alpha(t)^t]] = [[B,A],[A,A^t]] - 2[A,S([A^t,[B,A]])],
$$
\begin{align*}
\left.\ddt\right|_0 \la [\alpha(t),[\alpha(t),\alpha(t)^t]],\alpha'(t)\ra =& \la [[B,A],[A,A^t]],[B,A]\ra - \la 2[A,S([A^t,[B,A]])], [B,A]\ra \\ &+\la[A,[A,A^t]],[B,[B,A]]\ra\\
=& \la [[B,A],[A,A^t]],[B,A]\ra - 2\tr{S([A^t,[B,A]])^2} \\
&+\la[A,[A,A^t]],[B,[B,A]]\ra.\\
\end{align*}
All this together with Proposition \ref{critFaa} gives that
\begin{align}
c_4(A)^4\left.\frac{d^2}{dt^2}\right|_0 F(\alpha(t))
=& \unm\left(-8(\tr{A})^2(\tr{S(A)^2}+(\tr{A})^2)+3|[A,A^t]|^2\right)\la [A,A^t],B\ra^2 \notag \\
&+ c_2(A)\left(\unm |[B,A]|^2+\unm\la[B^t,A],[B,A]\ra\right) \notag \\
&- \frac{(\tr{S(A)^2}+(\tr{A})^2)c_2(A)}{c_3(A)}\la [A,A^t],B\ra^2 \notag \\
&+ c_3(A)\Big(\la [[B,A],[A,A^t]],[B,A]\ra \notag  \\
&- 2\tr{S([A^t,[B,A]])^2} + \la [A,[A,A^t]],[B,[B,A]]\ra\Big). \notag \\
=& \unm|[A,A^t]|^2\la [A,A^t],B\ra^2 + c_2(A)\left(\unm |[B,A]|^2+\unm\la[B^t,A],[B,A]\ra\right) \notag \\
&+ c_3(A)\Big(\la [[B,A],[A,A^t]],[B,A]\ra \notag  \\
&- 2\tr{S([A^t,[B,A]])^2} + \la [A,[A,A^t]],[B,[B,A]]\ra\Big), \notag
\end{align}
concluding the proof.
\end{proof}

As a first application of formula in Lemma \ref{Fseg}, we show that normal matrices are {\it non-degenerate} maxima, in the sense that the Hessian is negative definite on the orthogonal complement of $\Or(n-1)$-orbits.  Recall that $F$ is constant $\Or(n-1)$-orbits.

\begin{proposition}\label{normal-nodeg}
For any normal $A$, $\left.\frac{d^2}{dt^2}\right|_0 F(\alpha(t))\leq 0$ for any curve $\alpha(t)=e^{tB}Ae^{-tB}$ and equality holds if and only if $\alpha'(0)\in T_A\Or(n-1)\cdot A$, if and only if $[S(B),A]=0$.
\end{proposition}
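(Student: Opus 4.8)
The plan is to feed a normal $A$, for which $[A,A^t]=0$, into the formula of Lemma \ref{Fseg}.  This annihilates the first summand and two of the three summands carrying the factor $c_3(A)$, leaving
$$
c_4(A)^4\left.\frac{d^2}{dt^2}\right|_0 F(\alpha(t)) = c_2(A)\left(\unm|[B,A]|^2+\unm\la[B^t,A],[B,A]\ra\right) - 2c_3(A)\,\tr{S([A^t,[B,A]])^2}.
$$
From the defining formulas, the vanishing of $|[A,A^t]|^2$ forces $c_2(A)=-2(\tr A)^2\left(\tr{S(A)^2}+(\tr A)^2\right)^2\le 0$, while $c_3(A)=\left(\tr{S(A)^2}+(\tr A)^2\right)^2>0$ and $c_4(A)>0$ (note $\tr{S(A)^2}>0$ since $A\notin\sog(n-1)$).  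Also $\tr{S([A^t,[B,A]])^2}=|S([A^t,[B,A]])|^2\ge 0$.

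The only sign left to check is that $\unm|[B,A]|^2+\unm\la[B^t,A],[B,A]\ra\ge 0$.  Since $[B,A]+[B^t,A]=2[S(B),A]$, this quantity equals $\la[S(B),A],[B,A]\ra$, which is $\unc\left.\frac{d^2}{dt^2}\right|_0|\alpha(t)|^2$ (expand $\left.\frac{d^2}{dt^2}\right|_0\la\alpha(t),\alpha(t)\ra=2\la[B,[B,A]],A\ra+2|[B,A]|^2$ and use $\la[B,X],Y\ra=\la X,[B^t,Y]\ra$).  Since $A$ is normal, $[A,A^t]=0$, so by the moment-map description \eqref{mmA-1} the norm $|\cdot|$ on the closed conjugacy class $\cca(A)$ attains its minimum exactly at $A$; hence $t=0$ is a global minimum of $t\mapsto|\alpha(t)|^2$ and its second derivative there is $\ge 0$.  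Combining the three sign facts above with the displayed formula yields $\left.\frac{d^2}{dt^2}\right|_0 F(\alpha(t))\le 0$, which is the first assertion.

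For the equality statement, note that both summands in the displayed formula are $\le 0$, so $\left.\frac{d^2}{dt^2}\right|_0 F(\alpha(t))=0$ forces in particular $S([A^t,[B,A]])=0$.  I claim that for a normal $A$ this is equivalent to $[S(B),A]=0$.  A short Jacobi-identity computation using $[A,A^t]=0$ shows that $[A^t,[X,A]]$ is skew-symmetric when $X\in\sog(n-1)$ and symmetric when $X^t=X$; applying this to $B=S(B)+(B-S(B))$ gives $S([A^t,[B,A]])=[A^t,[S(B),A]]$, so the condition reads $[A^t,[S(B),A]]=0$, i.e.\ $[S(B),A]$ commutes with $A^t$.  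By the Fuglede--Putnam theorem (for the real normal matrix $A$) it then commutes with $A$ as well, i.e.\ $\ad_A^2(S(B))=0$; and since $[A,A^t]=0$, the operator $\ad_A$ is normal on $(\glg_{n-1}(\RR),\la\cdot,\cdot\ra)$ with adjoint $\ad_{A^t}$, so $\ad_A^2(S(B))=0$ implies $\ad_A(S(B))=0$, that is $[S(B),A]=0$.  Conversely, if $[S(B),A]=0$ then Fuglede--Putnam also gives $[S(B),A^t]=0$, so $[B,A]=[B-S(B),A]$ and the same Jacobi computation yields $S([A^t,[B,A]])=0$, while $\la[S(B),A],[B,A]\ra=0$; hence both summands vanish and $\left.\frac{d^2}{dt^2}\right|_0 F(\alpha(t))=0$.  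Finally, $[S(B),A]=0$ is equivalent to $\alpha'(0)=[B,A]=[B-S(B),A]\in[\sog(n-1),A]=T_A(\Or(n-1)\cdot A)$: the forward implication is immediate, and for the converse, if $[B,A]=[Y,A]$ with $Y\in\sog(n-1)$ then $[B-Y,A]=0$, so $[(B-Y)^t,A]=0$ by Fuglede--Putnam and transposition, and adding gives $[S(B-Y),A]=[S(B),A]=0$.

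The reduction and the sign estimates are routine.  The delicate point is the equality analysis: the vanishing of the single term $\tr{S([A^t,[B,A]])^2}$ must already force $\alpha'(0)$ to be tangent to the $\Or(n-1)$-orbit through $A$.  This rests on the two structural features of a normal matrix $A$ — that $\ad_A$ is a normal operator, and that Fuglede--Putnam applies — together with the bracket bookkeeping needed to isolate the symmetric part of $[A^t,[B,A]]$; the remaining manipulations are straightforward.
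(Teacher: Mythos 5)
Your proof is correct, and while its skeleton (specialize Lemma \ref{Fseg} to $[A,A^t]=0$, do a sign analysis, then characterize equality) matches the paper, two key steps are genuinely different. For the nonnegativity of $\unm|[B,A]|^2+\unm\la[B^t,A],[B,A]\ra$ the paper proves the identity $|[B^t,A]|=|[B,A]|$ (using normality of $A$) and applies Cauchy--Schwarz, whereas you identify this quantity with $\unc\left.\tfrac{d^2}{dt^2}\right|_0|\alpha(t)|^2$ and invoke the minimal-vector property \eqref{mmA-1}; both are legitimate, the paper's being self-contained and yours trading the computation for the GIT fact already quoted in the paper. More interestingly, your equality analysis extracts everything from the vanishing of $\tr S([A^t,[B,A]])^2$ alone, proving via the symmetric/skew splitting $S([A^t,[B,A]])=[A^t,[S(B),A]]$, Fuglede--Putnam and normality of $\ad_A$ that this already forces $[S(B),A]=0$; the paper instead reads the equality condition off the Cauchy--Schwarz term, which carries the coefficient $(\tr A)^2$ and hence gives no information when $\tr A=0$ -- so your route is actually the more robust one and quietly fills in that case, which the paper's write-up glosses over (its converse direction is the same as yours, with ``since $A$ is normal'' playing the role of Fuglede--Putnam). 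One simplification you might note: Fuglede--Putnam and the kernel argument for $\ad_A^2$ can be bypassed, since $[A^t,[S(B),A]]=0$ gives directly $0=\la [A^t,[A,S(B)]],S(B)\ra=|[A,S(B)]|^2$ because $\ad_{A^t}$ is the adjoint of $\ad_A$.
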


\begin{proof}
If $[A,A^t]=0$, then $c_2(A)=-2(\tr{A})^2c_3(A)$ and
\begin{align*}
c_4(A)^4\left.\frac{d^2}{dt^2}\right|_0 F(\alpha(t))
=& -2c_3(A)\Big(\unm(\tr{A})^2\left(|[B,A]|^2+\la[B^t,A],[B,A]\ra\right)  \\
&+\tr{S([A^t,[B,A]])^2}\Big).
\end{align*}
Since
\begin{align*}
|[B^t,A]|^2 =& -\la B^t,[A^t,[B^t,A]]\ra = -\la B^t,[A^t,B^t],A]\ra \\
=& -\la B,[A^t,[B,A]]\ra = |[B,A]|^2,
\end{align*}
by Cauchy-Schwartz we obtain that
$$
|[B,A]|^2+\la[B^t,A],[B,A]\ra\geq 0,
$$
where equality holds if and only if $[S(B),A]=0$.  This in turn implies that $S([A^t,[B,A]])=[A^t,[S(B),A]]=0$ and that
$$
[B,A]=\unm[B-B^t,A]\in [\sog(n-1),A]=T_A\Or(n-1)\cdot A.
$$
Conversely, if $[B,A]=[C,A]$ for some skew-symmetric matrix $C$, then $[B-C,A]=0$ and so $[B-C,A^t]=0$ since $A$ is normal.  Thus $[B^t+C,A]=0$, from which follows that $[B^t,A]=-[C,A]=-[B,A]$.  This implies that $[S(B),A]=0$ and hence $\left.\frac{d^2}{dt^2}\right|_0 F(\alpha(t))=0$, concluding the proof.
\end{proof}

We now use Lemma \ref{Fseg} to compute the second variation in a preferred direction.

\begin{lemma}\label{AAt}
Let $A$ be a critical point of $F|_{\cca(A)}$ which is not normal and consider the curve $\alpha(t)=e^{tB}Ae^{-tB}$ with $B=[A,A^t]$.  Then
$\left.\frac{d^2}{dt^2}\right|_0 F(\alpha(t))\geq 0$, and equality holds if and only if $\tr{A}=0$ and $[A,[A,[A,A^t]]]=0$.
\end{lemma}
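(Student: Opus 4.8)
The plan is to specialize the formula from Lemma~\ref{Fseg} to the direction $B=[A,A^t]$, which is symmetric, and simplify each of the resulting terms using the critical point condition (Proposition~\ref{critFaa}) and the symmetry $B^t=B$. First I would record the basic consequences of $B=[A,A^t]=B^t$: the term $\unm|[B,A]|^2+\unm\la[B^t,A],[B,A]\ra$ collapses to $|[B,A]|^2=|[[A,A^t],A]|^2$, and $\la[A,A^t],B\ra^2=|[A,A^t]|^4$. So three of the five pieces become manifestly nonnegative multiples of $|[A,A^t]|^4$ or $|[[A,A^t],A]|^2$ (using $c_2(A)\geq 0$ or $\leq 0$ — I will need to watch the sign of $c_2(A)$ here, but the critical equation \eqref{norm-crit} ties $c_2(A)|[A,A^t]|^2$ to $2c_3(A)|[A,[A,A^t]]|^2\geq 0$, so $c_2(A)\geq 0$ whenever $[A,A^t]\neq 0$, which is our standing hypothesis since $A$ is not normal).

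Next I would attack the two remaining terms inside the $c_3(A)$-bracket that are not obviously signed: $\la[[B,A],[A,A^t]],[B,A]\ra$ and $\la[A,[A,A^t]],[B,[B,A]]\ra$, with $B=[A,A^t]$. The first is $\la[[B,A],B],[B,A]\ra$ with $B$ symmetric; writing $Z:=[B,A]$ this is $\la[Z,B],Z\ra=\la Z,[B,Z]\ra$... wait, $[[B,A],B]=[Z,B]=-[B,Z]$, so $\la-[B,Z],Z\ra=-\la[B,Z],Z\ra=\la Z,[B,Z]\ra$ up to sign — I would carefully reorganize this as $\pm|[B,Z]|^2$-type expression or combine it with the $-2\tr S([A^t,[B,A]])^2$ term. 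The key algebraic identity I expect to exploit is that for symmetric $B$ and $Z=[B,A]$, one has $[A^t,Z]=[A^t,[B,A]]$, and $\tr S([A^t,Z])^2=|S([A^t,Z])|^2\geq 0$ always contributes with the "wrong" (negative) sign, so the real work is showing the positive terms dominate. I anticipate that after the dust settles, $c_4(A)^4\left.\frac{d^2}{dt^2}\right|_0 F(\alpha(t))$ becomes a sum of a positive multiple of $|[A,A^t]|^4$, plus $c_3(A)$ times a combination that rearranges into something like $|[[A,A^t],A]|^2 + (\text{a square involving }\tr A)$ minus the genuinely negative $-2|S([A^t,[B,A]])|^2$, and that the critical point relation \eqref{norm-crit} is exactly what is needed to cancel the bad term against part of the good one.

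The characterization of equality is then read off: equality forces each nonnegative piece to vanish. The term $\unm|[A,A^t]|^2\la[A,A^t],B\ra^2=\unm|[A,A^t]|^6$ vanishes iff $[A,A^t]=0$, contradicting "not normal" — so I expect equality instead to force the $(\tr A)$-part to vanish (giving $\tr A=0$) and the $[B,A]=[[A,A^t],A]$-part to vanish (giving $[A,[A,A^t]]=0$, equivalently $[A,[A,[A,A^t]]]=0$ since $[A,[A,A^t]]=0$ already implies the iterated bracket vanishes; conversely if the iterated bracket vanishes one pairs with $[A,A^t]$ or uses that $\la[A,[A,[A,A^t]]],[A,A^t]\ra=-|[A,[A,A^t]]|^2$ to recover $[A,[A,A^t]]=0$). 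I would double-check that when $\tr A=0$ the coefficient $c_2(A)$ simplifies ($c_2(A)=(\tr S(A)^2)|[A,A^t]|^2>0$), which makes the surviving inequality clean. The main obstacle I foresee is the bookkeeping in the $c_3(A)$-bracket: correctly grouping $\la[[B,A],[A,A^t]],[B,A]\ra$, $-2\tr S([A^t,[B,A]])^2$, and $\la[A,[A,A^t]],[B,[B,A]]\ra$ with $B=[A,A^t]$ into a transparently signed sum, and verifying that the trace-term cancellation uses precisely the critical equation rather than some extra hypothesis.
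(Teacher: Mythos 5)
Your overall strategy matches the paper's at the level of the skeleton: specialize Lemma \ref{Fseg} to $B=[A,A^t]$, use $B^t=B$ to collapse the $c_2(A)$-term to $c_2(A)|[A,[A,A^t]]|^2$, and note from \eqref{norm-crit} that $c_2(A)>0$ because $A$ is not normal. But the decisive part of the computation is left as an ``I anticipate'': the three unsigned terms in the $c_3(A)$-bracket do not get resolved. In the paper they are closed as follows. Writing $Z:=[A,[A,A^t]]$, both $\la [[B,A],[A,A^t]],[B,A]\ra$ and $\la [A,[A,A^t]],[B,[B,A]]\ra$ equal $-\la [[A,A^t],Z],Z\ra$, and one uses the identity $\la [[A,A^t],Z],Z\ra=\la [A,A^t],[Z,Z^t]\ra=|[A^t,Z]|^2-|[A,Z]|^2$; moreover the critical equation of Proposition \ref{critFaa} turns $[A^t,Z]$ into $\tfrac{c_2(A)}{2c_3(A)}[A,A^t]$ (a symmetric matrix), so the $-2\tr S([A^t,Z])^2$ term and the $c_2(A)|Z|^2$ term cancel via \eqref{norm-crit}. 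What survives is $\unm|[A,A^t]|^6-\tfrac{c_2(A)^2}{2c_3(A)}|[A,A^t]|^2+2c_3(A)|[A,[A,[A,A^t]]]|^2$, and the nonnegativity of the first two terms combined is not term-by-term (the middle contribution is genuinely negative): it needs the factorization showing they equal $2(\tr A)^2\bigl(\tr S(A)^2+(\tr A)^2\bigr)\bigl(|[A,A^t]|^2-(\tr A)^2(\tr S(A)^2+(\tr A)^2)\bigr)|[A,A^t]|^2\geq 2(\tr A)^2c_2(A)|[A,A^t]|^2\geq 0$. Without exhibiting this cancellation, ``the positive terms dominate'' is a restatement of the claim, not a proof.

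The equality analysis in your proposal is moreover incorrect. You claim equality forces $[A,[A,A^t]]=0$, and that this is equivalent to $[A,[A,[A,A^t]]]=0$ via the identity $\la[A,[A,[A,A^t]]],[A,A^t]\ra=-|[A,[A,A^t]]|^2$. That identity is false: the pairing equals $-\tr\bigl([A,[A,A^t]]^2\bigr)$, and $[A,[A,A^t]]$ is not symmetric, so this is not $-|[A,[A,A^t]]|^2$. The equivalence itself fails: $[A,[A,A^t]]=0$ implies $A$ is normal (since $|[A,A^t]|^2=-\la[A,[A,A^t]],A\ra$), contradicting the standing hypothesis, whereas the equality case of the lemma is nonempty --- it is attained at the Ricci solitons $A=N+C$ of Example \ref{RS}, which satisfy $\tr A=0$ and $[A,[A,[A,A^t]]]=0$ but $[A,[A,A^t]]=cN\neq 0$. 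So as written your argument would make the equality case vacuous and would break the later use of the lemma in Proposition \ref{only}. The correct characterization falls out of the surviving formula above: if $\tr A=0$ the first two terms cancel identically and only $2c_3(A)|[A,[A,[A,A^t]]]|^2$ remains, while if $\tr A\neq 0$ vanishing of the combined first two terms would force $|[A,A^t]|^2=(\tr A)^2(\tr S(A)^2+(\tr A)^2)$ and hence $c_2(A)<0$, impossible since $c_2(A)>0$ by \eqref{norm-crit} and $[A,[A,A^t]]\neq 0$.
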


\begin{proof}
We use Lemma \ref{Fseg}, Proposition \ref{critFaa} and \eqref{norm-crit} to compute $c_4(A)^4\left.\frac{d^2}{dt^2}\right|_0 F(\alpha(t))$ as follows:
\begin{align*}
& \unm|[A,A^t]|^6 + c_2(A)|[A,[A,A^t]]|^2  \\
&+ c_3(A)\Big(-\la [[A,A^t],[A,[A,A^t]]],[A,[A,A^t]]\ra   \\
&- 2\tr{S([A^t,[A,[A,A^t]]])^2} - \la [A,[A,A^t]],[[A,A^t],[A,[A,A^t]]]\ra\Big) \\
=& \unm|[A,A^t]|^6 + c_2(A)|[A,[A,A^t]]|^2 - \frac{c_2(A)^2}{2c_3(A)}\tr{S([A,A^t])^2} \\
& - 2c_3(A)\la [[A,A^t],[A,[A,A^t]]],[A,[A,A^t]]\ra  \\
=& \unm|[A,A^t]|^6 + \frac{c_2(A)^2}{2c_3(A)}|[A,A^t]|^2 - \frac{c_2(A)^2}{2c_3(A)}|[A,A^t]|^2 \\
& - 2c_3(A)\la [[A,A^t],[A,[A,A^t]]],[A,[A,A^t]]\ra \\
=& \unm|[A,A^t]|^6 - 2c_3(A)\left(|[A^t,[A,[A,A^t]]]|^2 - |[A,[A,[A,A^t]]]|^2\right) \\
=& \unm|[A,A^t]|^6 - \frac{c_2(A)^2}{2c_3(A)}|[A,A^t]|^2 + 2c_3(A) |[A,[A,[A,A^t]]]|^2.
\end{align*}
By replacing with the formulas for the positive numbers $c_2(A)$ and $c_3(A)$, we obtain that
$$
\unm|[A,A^t]|^6 -\frac{c_2(A)^2}{2c_3(A)}|[A,A^t]|^2
$$
equals
\begin{align*}
& \unm|[A,A^t]|^6 -\unm\left(-2(\tr{A})^2(\tr{S(A)^2}+(\tr{A})^2)+|[A,A^t]|^2\right)^2|[A,A^t]|^2 \\
=&  \left(- 2(\tr{A})^4(\tr{S(A)^2}+(\tr{A})^2)^2 + 2(\tr{A})^2(\tr{S(A)^2}+(\tr{A})^2)|[A,A^t]|^2\right)|[A,A^t]|^2 \\
=&  \left(2(\tr{A})^2(\tr{S(A)^2}+(\tr{A})^2)\left(-(\tr{A})^2(\tr{S(A)^2}+(\tr{A})^2)+|[A,A^t]|^2\right)\right)|[A,A^t]|^2 \\
\geq& \left(2(\tr{A})^2(\tr{S(A)^2}+(\tr{A})^2)\left(-2(\tr{A})^2(\tr{S(A)^2}+(\tr{A})^2)+|[A,A^t]|^2\right)\right)|[A,A^t]|^2 \\
=& 2(\tr{A})^2 c_2(A) |[A,A^t]|^2 \geq 0.
\end{align*}
Thus $\left.\frac{d^2}{dt^2}\right|_0 F(\alpha(t))\geq 0$ and the equality condition is as stated, concluding the proof.
\end{proof}

\begin{corollary}
A nilsoliton $N$ is a degenerate global maximum.
\end{corollary}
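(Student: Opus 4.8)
The plan is to obtain the ``global maximum'' assertion for free from Theorem~\ref{solv-max}(i) and to exhibit the degeneracy by applying Lemma~\ref{AAt} in the distinguished direction $B=[N,N^t]$.

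First I would dispose of the easy half. A nonzero nilsoliton $N$ is a solvsoliton by Theorem~\ref{romina}, hence a global maximum of $F|_{\cca(N)}$ by Theorem~\ref{solv-max}(i); it is also a critical point of $F|_{\cca(N)}$, as recalled just before Example~\ref{RS}. Thus the only point to establish is that $N$ violates the non-degeneracy property of Proposition~\ref{normal-nodeg}, i.e.\ that the Hessian of $F|_{\cca(N)}$ at $N$ is \emph{not} negative definite on the orthogonal complement of $T_N\Or(n-1)\cdot N$.

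Next I would verify the equality hypotheses of Lemma~\ref{AAt}. A nonzero nilpotent matrix is never normal, so Lemma~\ref{AAt} applies, and $\tr N=0$. By Theorem~\ref{romina} there is $c\in\RR$ with $[N,[N,N^t]]=cN$; pairing this identity with $N$ and using that $[N,N^t]$ is symmetric gives $c\,|N|^2=-|[N,N^t]|^2$, so $c<0$ (nonzero precisely because $N$ is not normal). Consequently $[N,[N,[N,N^t]]]=c\,[N,N]=0$, so both equality conditions $\tr N=0$ and $[N,[N,[N,N^t]]]=0$ in Lemma~\ref{AAt} hold. Hence, for the curve $\alpha(t)=e^{tB}Ne^{-tB}$ with $B=[N,N^t]$, we obtain $\left.\tfrac{d^2}{dt^2}\right|_0 F(\alpha(t))=0$.

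Finally I would identify the resulting null direction. Since $N$ is a critical point, $\left.\tfrac{d^2}{dt^2}\right|_0 F(\alpha(t))=\Hess_N(F|_{\cca(N)})(\alpha'(0),\alpha'(0))$, and $\alpha'(0)=[[N,N^t],N]=-cN\neq 0$, a positive multiple of $N$. A short trace computation gives $\langle N,[X,N]\rangle=-\tr([N,N^t]X)=0$ for every skew-symmetric $X$, so $N\perp[\sog(n-1),N]=T_N\Or(n-1)\cdot N$. Therefore the Hessian vanishes on the nonzero vector $\alpha'(0)$ lying in $(T_N\Or(n-1)\cdot N)^\perp$, which is exactly the statement that $N$ is a degenerate maximum. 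Conceptually nothing surprising is going on: the ray $\{tN:t>0\}$ lies inside $\cca(N)$ because scaling preserves the Jordan type of a nilpotent matrix, and $F$ is scale invariant, so the direction $N$ is automatically a null direction of the Hessian; Lemma~\ref{AAt} only makes this explicit via the curve $\alpha$. The one thing that actually needs care — and the only place non-normality of $N$ enters — is checking $c\neq 0$, so that $\alpha$ is non-constant and $\alpha'(0)\neq 0$.
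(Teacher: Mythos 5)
Your proof is correct and follows essentially the same route as the paper: apply Lemma \ref{AAt} in the direction $B=[N,N^t]$, using $[N,[N,N^t]]=cN$ with $c<0$ (so both equality conditions hold and the second variation vanishes), and then check that the null direction $[B,N]=-cN$ is not tangent to the $\Or(n-1)$-orbit. The only (harmless) difference is that you verify this last point by showing $N\perp[\sog(n-1),N]$ directly, whereas the paper argues by contradiction from $-cN=[C,N]$ with $C$ skew-symmetric.
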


\begin{proof}
Recall that $[N,[N,N^t]]=cN$, $c<0$, so it follows from Lemma \ref{AAt} that in the direction $B=[N,N^t]$, $\left.\frac{d^2}{dt^2}\right|_0 F(\alpha(t))=0$; moreover, $[B,N]$ does not belong to $T_N\Or(n-1)\cdot N$.  Indeed, if $[B,N]=[C,N]$ with $C^t=-C$, then $-cN=[C,N]$ and hence $-c|N|^2=\la[C,N],N\ra=0$, a contradiction.
\end{proof}

\begin{proposition}\label{only}
If $A$ is a local maximum of $F|_{\cca(A)}$, then $A$ is either a solvsoliton (i.e.\ global maximum) or $A$ is a Ricci soliton $N+C$ as in Example \ref{RS}
\end{proposition}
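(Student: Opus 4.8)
The plan is to combine the critical-point equation of Proposition~\ref{critFaa}, the second-variation computation of Lemma~\ref{AAt} in the distinguished direction $B=[A,A^t]$, and Jablonski's classification of almost-abelian Ricci solitons already used in Example~\ref{RS}. A local maximum is in particular a critical point of $F|_{\cca(A)}$, so Proposition~\ref{critFaa} applies to $A$; if $A$ is normal we are done, since by Theorem~\ref{romina} a normal matrix is a solvsoliton, hence (Theorem~\ref{solv-max}(i)) a global maximum. Thus I would assume henceforth that $A$ is not normal, i.e.\ $[A,A^t]\ne 0$.

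Next I would apply Lemma~\ref{AAt} to the curve $\alpha(t)=e^{tB}Ae^{-tB}$ with $B=[A,A^t]$. Being a local maximum gives $\left.\tfrac{d^2}{dt^2}\right|_0 F(\alpha(t))\le 0$, whereas Lemma~\ref{AAt} gives $\left.\tfrac{d^2}{dt^2}\right|_0 F(\alpha(t))\ge 0$; hence this second derivative vanishes, and the equality case of Lemma~\ref{AAt} forces
$$
\tr A=0 \qquad\mbox{and}\qquad [A,[A,[A,A^t]]]=0.
$$
In particular $\mu_A$ is unimodular. Moreover, since $\tr A=0$ the critical-point identity of Proposition~\ref{critFaa}(ii) specialises to $|[A,A^t]|^2[A,A^t]=2\tr{S(A)^2}\,[A^t,[A,[A,A^t]]]$, and as $A$ is not normal this shows in addition that $[A,[A,A^t]]\ne 0$.

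I would then distinguish two cases according to $\tr{A^2}$. If $\tr{A^2}=0$, then $A$ falls under the hypotheses of Theorem~\ref{solv-max}(ii) (the case $\tr A=\tr{A^2}=0$), so being a local maximum forces $A$ to be a solvsoliton and we are done. If $\tr{A^2}\ne 0$, one has $A$ non-normal with $\tr A=0$, the iterated bracket $[A,[A,[A,A^t]]]$ vanishing, and the critical equation holding; writing $A=S(A)+K(A)$ into symmetric and skew parts and using $[A,A^t]=-2[S(A),K(A)]$, I would exploit these relations — together, if necessary, with the general second-variation formula of Lemma~\ref{Fseg} evaluated in further directions — to show that $A$ is a Ricci soliton. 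Once $A$ is known to be a Ricci soliton, \cite[Theorem~1.1]{Jbl2} together with \cite[Theorem~8.2]{Jbl2} (as recorded in Example~\ref{RS}) gives that $A$ is a solvsoliton or is of the form $N+C$ of that example, completing the proof.

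The hard part is this last step: converting the algebraic consequences of being a local maximum — $\tr A=0$, the vanishing of $[A,[A,[A,A^t]]]$, and the critical equation — into the conclusion that $A$ is a Ricci soliton (equivalently, into the explicit decomposition $A=N+C$). A possible shortcut is to check that Jablonski's algebraic criterion for almost-abelian Ricci solitons is already implied by these relations; failing that, one should feed carefully chosen directions into the second-variation formula of Lemma~\ref{Fseg} to rule out all the remaining, non--Ricci-soliton critical points (such as the ones exhibited in the Example following Example~\ref{RS}) as genuine local maxima.
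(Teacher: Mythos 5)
Your first half coincides with the paper's argument: a local maximum is a critical point, normal critical points are solvsolitons (Theorem \ref{romina}, Theorem \ref{solv-max}(i)), and for a non-normal $A$ the combination of Lemma \ref{AAt} in the direction $B=[A,A^t]$ with the local-maximum condition forces $\tr{A}=0$ and $[A,[A,[A,A^t]]]=0$. The gap is exactly at the step you defer: you never show that these conditions imply $A=N+C$ with $N$ a nilsoliton, $C$ skew-symmetric and $[N,C]=0$; you only list strategies (``check Jablonski's criterion'' or ``feed further directions into Lemma \ref{Fseg}'') without executing any of them. Note also that your case split on $\tr{A^2}$ does not reduce the difficulty: any Ricci soliton $N+C$ of Example \ref{RS} with $C\ne 0$ has $\tr{A^2}=\tr{C^2}=-|C|^2\ne 0$, so the unresolved case is precisely the one that contains the conclusion you are after. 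Since proving that $A$ is a Ricci soliton is (via \cite[Theorem 8.2]{Jbl2}) equivalent to producing the decomposition $N+C$, the ``hard part'' you postpone is the entire remaining content of the proposition.

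For comparison, the paper closes this gap purely algebraically, with no appeal to Jablonski's classification. Since $[A,X]=0$ for $X:=[A,[A,A^t]]$, one has $[A^t,X]=2[S(A),X]$, and taking symmetric parts of the specialized critical equation gives
$$
|[A,A^t]|^2[A,A^t]=4|S(A)|^2\,[S(A),Sk(X)],\qquad Sk(X)=[S(A),[A,A^t]].
$$
Pairing this with $[A,A^t]$ and comparing with the norm identity \eqref{norm-crit} (which with $\tr{A}=0$ reads $|[A,A^t]|^4=2|S(A)|^2|X|^2$) yields $|Sk(X)|^2=|S(X)|^2=\unm|X|^2$, and then
$$
\la S(A),S(X)\ra^2=\unc|[A,A^t]|^4=|S(A)|^2\,|S(X)|^2,
$$
i.e.\ equality in Cauchy--Schwarz, so $S(A)=c\,S(X)$ for some nonzero $c\in\RR$. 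Hence $\tfrac1c A=X+B$ with $B$ skew-symmetric and $[A,B]=0$, and $N:=A-cB$ satisfies $[N,[N,N^t]]=\tfrac1c N$, so $N$ is a nilsoliton and $A=N+cB$ is the Ricci soliton of Example \ref{RS}. An argument of this kind (or an actual verification of the soliton criterion from your listed relations) is what your proposal is missing.
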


\begin{proof}
According to Lemma \ref{AAt}, any local maximum $A$ of $F|_{\cca(A)}$ other than a solvsoliton must satisfy $\tr{A}=0$ and $[A,[A,[A,A^t]]]=0$.  It then follows from Proposition \ref{critFaa} that
$$
|[A,A^t]|^2[A,A^t] = 4|S(A)|^2[S(A),Sk([A,[A,A^t]])],
$$
where $Sk(B):=\unm(B-B^t)$ denotes the skew-symmetric part of a matrix.  This implies that
$$
|[A,A^t]|^4 = 4|S(A)|^2\la Sk([A,[A,A^t]]),[S(A),[A,A^t]]\ra = 4|S(A)|^2 |Sk([A,[A,A^t]])|^2,
$$
and since by \eqref{norm-crit}, $|[A,A^t]|^4 = 2|S(A)|^2|[A,[A,A^t]]|^2$, we obtain that
$$
|[A,[A,A^t]]|^2 = \unm |Sk([A,[A,A^t]])|^2 = \unm|S([A,[A,A^t]])|^2.
$$
Thus
$$
\la S(A),S([A,[A,A^t]])\ra^2 = \unc |[A,A^t]|^4 = |S(A)|^2 |S([A,[A,A^t]])|^2,
$$
and so $S(A)=cS([A,[A,A^t]])$ for some $c>0$, that is, $\frac{1}{c}A=[A,[A,A^t]]+B$ for some skew-symmetric matrix $B$.  Note that $[A,B]=0$.  If we set $N:=A-cB$, then $[N,[N,N^t]]=\frac{1}{c}A-B=\frac{1}{c}N$, so $N$ is a nilsoliton, $cB$ is skew-symmetric, $[N,cB]=0$ and $A=N+cB$, as was to be shown.
\end{proof}

In the light of the above proposition, a natural question is if a Ricci soliton $A=N+C$ is a local maximum of $F|_{\cca(A)}$.  Recall that $N$ is a nilsoliton, $C^t=-C$ and $[N,C]=0$.  We note that $A_u:=uN+C$ are all Ricci solitons as well and that $A_u\in\cca(A_1)$ for any $u>0$.

\begin{lemma}
The Ricci soliton $A_u=uN+C$ is not a local maximum of $F|_{\cca(A_1)}$ for any sufficiently large $u>0$.
\end{lemma}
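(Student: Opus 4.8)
The plan is to exhibit, for every sufficiently large $u$, a single explicit tangent direction at $A_u$ along which $F|_{\cca(A_1)}$ has strictly positive second variation; since $A_u\in\cca(A_1)$ is a critical point of $F|_{\cca(A_1)}$ (Example \ref{RS}, applied to $A_u=uN+C$ with $uN$ a nilsoliton), this shows that $A_u$ is not a local maximum. Everything will be read off from Lemma \ref{Fseg}, so first I would record the relevant algebra. Transposing $[N,C]=0$ gives $[N^t,C]=0$, and since $N$ is nilpotent and commutes with $C$ the product $NC$ is nilpotent, whence $\tr{A_u}=\tr{C}=0$, $|A_u|^2=u^2|N|^2+|C|^2$ and $\tr{S(A_u)^2}=\unm u^2|N|^2$. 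Also $[A_u,A_u^t]=u^2[N,N^t]$, and since the nilsoliton $N$ satisfies $[N,[N,N^t]]=cN$ with $c<0$ (Theorem \ref{romina}), $[A_u,[A_u,A_u^t]]=cu^3N$. Plugging into the definitions,
$$
c_2(A_u)=\unm u^6|N|^2|[N,N^t]|^2,\qquad c_3(A_u)=\unc u^4|N|^4,\qquad c_4(A_u)=\unc u^4\big(|N|^4+|[N,N^t]|^2\big),
$$
and note $N\ne 0$, so $N$ is not normal and $|[N,N^t]|^2>0$.

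Next I would choose the test direction to be a fixed symmetric matrix $B$ commuting with both $N$ and $N^t$ but not with $C$. The key point is that then $[B,A_u]=u[B,N]+[B,C]=[B,C]=:Q$ is independent of $u$ and nonzero, $Q$ is symmetric (a bracket of a symmetric and a skew matrix), $[N,Q]=[N^t,Q]=0$ by the Jacobi identity together with $[B,N]=[B,N^t]=[N,C]=[N^t,C]=0$, and $\la[N,N^t],B\ra=0$ by cyclicity of the trace. Feeding $A=A_u$ and this $B$ into Lemma \ref{Fseg}: the first term drops because $\la[A_u,A_u^t],B\ra=u^2\la[N,N^t],B\ra=0$; the $c_2$-term equals $c_2(A_u)|Q|^2$ since $B^t=B$; and in the $c_3$-bracket one has $\la[Q,[A_u,A_u^t]],Q\ra=u^2\la[Q,[N,N^t]],Q\ra=0$ (a bracket of two symmetric matrices is skew, hence orthogonal to $Q$), $\la[A_u,[A_u,A_u^t]],[B,Q]\ra=cu^3\la[B,N],Q\ra=0$, and $[A_u^t,Q]=u[N^t,Q]-[C,Q]=-[C,Q]$ is $u$-independent. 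What survives is
$$
c_4(A_u)^4\left.\frac{d^2}{dt^2}\right|_0 F(e^{tB}A_ue^{-tB})=\unm u^6|N|^2|[N,N^t]|^2|Q|^2-\unm u^4|N|^4\,\tr{S([C,Q])^2},
$$
whose leading coefficient $\unm|N|^2|[N,N^t]|^2|Q|^2$ is strictly positive (and the subtracted term is $\ge 0$). Hence this second derivative is positive for all $u$ beyond an explicit threshold, and $A_u$ is not a local maximum of $F|_{\cca(A_1)}$.

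The main obstacle is securing such a $B$. The centralizer $\zg$ of $\{N,N^t\}$ in $\glg_{n-1}(\RR)$ is a $\ast$-subalgebra containing the nonzero skew element $C$; since a finite-dimensional real $\ast$-algebra is generated by its self-adjoint part and the property of commuting with $C$ is preserved under sums, products and brackets, the only way no such $B$ exists is that $C$ be central in $\zg$, that is, $C$ be a multiple of a complex structure commuting with $N$. In that remaining degenerate case one instead takes $B$ symmetric lying only in the centralizer of $N$; the same computation then leaves the leading term $\unm|N|^2|[N,N^t]|^2|Q|^2-\tfrac{1}{8}|N|^4|[N-N^t,Q]|^2$ with $Q=[B,C]$, and one must choose $B$ so that this is positive. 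Finally, it is worth noting that the naive direction $B=[A_u,A_u^t]$ is of no use here: one has $\tr{A_u}=0$ and $[A_u,[A_u,[A_u,A_u^t]]]=cu^3[A_u,N]=0$, so both equality conditions of Lemma \ref{AAt} hold at $A_u$ and that direction gives second variation exactly zero — which is exactly why a genuinely "skew-flavoured" direction, arising from the commuting rotation $C$, must be used.
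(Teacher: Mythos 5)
Your main computation is correct and is essentially the paper's own proof: you feed the critical point $A_u$ into Lemma \ref{Fseg} with a symmetric direction $B$ commuting with $N$, the cross terms vanish for exactly the reasons you give, and what survives is the paper's expression
$$
\unm u^4|N|^2\left(u^2|[N,N^t]|^2|[B,C]|^2-|N|^2|[C,[B,C]]|^2\right),
$$
which is positive for large $u$ once $[B,C]\ne 0$. (Your extra requirement that $B$ also commute with $N^t$ is not an extra hypothesis: transposing $[B,N]=0$ for symmetric $B$ gives $[B,N^t]=0$ automatically; this is also why $S\big(u[N^t,[B,C]]\big)$ disappears under the paper's seemingly weaker assumption.)

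Where you go beyond the paper is in trying to secure the existence of a symmetric $B$ centralizing $N$ with $[B,C]\ne 0$; the paper leaves this implicit (``provided $[B,C]\ne 0$''), and your argument for it does not work as stated. First, a finite-dimensional real $*$-subalgebra of $\glg_{n-1}(\RR)$ need not be generated by its symmetric elements: $\spann_\RR\{I,J\}\cong\CC$, with $J$ the standard complex structure, is closed under transposition but its symmetric part is only $\RR I$; so ``no symmetric element of $\zg$ moves $C$'' does not force $C$ to be central in $\zg$. Second, your fallback in the ``degenerate case'' is vacuous: since symmetric matrices commuting with $N$ automatically commute with $N^t$, the class ``$B$ symmetric in the centralizer of $N$ only'' coincides with the class you already used, so in the degenerate scenario every admissible $B$ has $Q=[B,C]=0$ and your proposed leading term $\unm|N|^2|[N,N^t]|^2|Q|^2-\tfrac18|N|^4|[N-N^t,Q]|^2$ is identically zero (moreover $[N-N^t,Q]=0$ for all such $B$ anyway, so no new term is ever produced). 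Thus, if a pair $(N,C)$ existed for which every symmetric matrix commuting with $N$ also commutes with $C$, your proof would break down there exactly as the paper's would; ruling this out requires a separate argument (analyzing, say, the $C$-antilinear symmetric part of the centralizer of $N$), which neither your proposal nor the paper supplies. Your closing observation that the direction $B=[A_u,A_u^t]$ is useless, by the equality case of Lemma \ref{AAt}, is correct and consistent with the paper's corollary that nilsolitons are degenerate maxima.
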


\begin{proof}
According to Lemma \ref{Fseg}, for the curve $\alpha(t)=e^{tB}A_ue^{-tB}$, we have that the number $c_4(A)^4\left.\frac{d^2}{dt^2}\right|_0 F(\alpha(t))$ equals to
\begin{align*}
&u^8\unm|[N,N^t]|^2\la [N,N^t],B\ra^2 \\
&+ u^6\unm|N|^2|[N,N^t]|^2\left(\unm |[B,uN+C]|^2+\unm\la[B^t,uN+C],[B,uN+C]\ra\right)  \\
&+ u^4\unc|N|^4\Big(u^2\la [[B,uN+C],[N,N^t]],[B,uN+C]\ra   \\
&- 2\tr{S([uN^t-C,[B,uN+C]])^2} + u^3\la [N,[N,N^t]],[B,[B,uN+C]]\ra\Big).
\end{align*}
In particular, if $B^t=B$ and $[B,N]=0$, then this becomes
$$
\unm u^4|N|^2\left(u^2|[N,N^t]|^2|[B,C]|^2 - |N|^2|[C,[B,C]]|^2\right),
$$
which is a positive number for sufficiently large $u$ provided $[B,C]\ne 0$.  This implies that $uN+C$ is not a local maximum for sufficiently large $u$.
\end{proof}

We do not know if a Ricci soliton $A_u$ can be a local maximum for some small $u$.

\subsection{Continuous extension}
We saw in Section \ref{alm-abel} that the functional
$$
F:\glg_{n-1}(\RR)\smallsetminus\sog(n-1)\longrightarrow\RR
$$
given in \eqref{Faa} can not be continuously extended to $\glg_{n-1}(\RR)$.  It is however a natural question if it is possible to continuously extend the following functionals,
$$
F:\overline{\cca(A)}\smallsetminus\sog(n-1)\longrightarrow\RR, \qquad F:\cca(A)\smallsetminus\sog(n-1)\longrightarrow\RR.
$$
Note that this only makes sense for $A$ of imaginary type and that $\overline{\cca(A)}\cap\sog(n-1)$ consists of a single $\Or(n-1)$-orbit.

Consider the following families $D_t$ and $E_t$, $0<t$, of matrices in $\sca_{\im\RR}$ given by
$$
D_t:=\left[\begin{matrix} 0&-1&&\\ 1&0&& \\ &&0&t \\ &&0&0 \end{matrix}\right], \qquad F(D_t)\equiv \frac{1}{3},
$$
and
$$
E_t:=\left[\begin{matrix} t&-\sqrt{1+t^2}&&\\ \sqrt{1+t^2}&-t&& \\ &&0&t \\ &&0&0 \end{matrix}\right],
\qquad F(E_t)=\frac{4t^4}{3t^4 + 2t^2} \underset{t\to 0} \longrightarrow 0.
$$
Since $D_t,E_t\in\cca(D_1)$ for any $t>0$ and both converge to $D_0=E_0$ as $t\to 0$, we obtain that $F|_{\overline{\cca(D_1)}\smallsetminus\sog(n-1)}$ can not be continuously extended to $\overline{\cca(D_1)}$.

For $A_0$ skew-symmetric, the continuity of the functional  $F:\cca(A_0)\longrightarrow\RR$ defined by \eqref{Faa} outside $\Or(n-1)\cdot A_0$ and $F|_{\Or(n-1)\cdot A_0}\equiv 0$ remains open.

\end{document}